\documentclass[12pt,reqno,a4paper]{amsart}
\usepackage{amssymb,mathtools,extarrows}
\allowdisplaybreaks[4]
\usepackage[all]{xy}
\usepackage[margin=1in]{geometry}
\usepackage{enumitem,microtype}
\usepackage[colorlinks=true,linkcolor=blue,citecolor=blue,urlcolor=blue]{hyperref}
\hypersetup{pdftitle={Locally enhanced nontopological triangulated categories},
  pdfauthor={Chencheng Zhang},
  pdfkeywords={triangulated categories, enhancements, A-infinity categories, Toda brackets, category cohomology, filtered colimits}}
\setlist{nosep,leftmargin=2.2em}
\setlist[enumerate]{label=\textup{(\arabic*)},ref=\arabic*}
\makeatletter
\renewcommand{\paragraph}{\@startsection{paragraph}{4}%
  \z@\z@{-\fontdimen2\font}%
  {\normalfont\bfseries\itshape}}
\makeatother

\theoremstyle{plain}
\newtheorem{lemma}{Lemma}[section]
\newtheorem{proposition}[lemma]{Proposition}

\newtheorem{corollary}[lemma]{Corollary}
\theoremstyle{remark}
\newtheorem{remark}[lemma]{Remark}
\theoremstyle{definition}
\newtheorem{definition}[lemma]{Definition}
\theoremstyle{plain}
\newtheorem*{maintheorem}{Main Theorem}

\title[A nontopological triangulated category with is \texorpdfstring{$\theta$}{theta}-dg enhanced]{A nontopological triangulated category\\ which is \texorpdfstring{$\theta$}{theta}-dg enhanced}
\author{Chencheng Zhang}
\address{School of Mathematical Sciences, Shanghai Jiao Tong University}
\email{zhangchencheng@sjtu.edu.cn}
\subjclass[2020]{Primary 18G80; Secondary 18G35, 18G70, 55U35}
\keywords{Triangulated categories,  $A_\infty$-categories, Toda brackets, Mac Lane cohomology}
\thanks{The author is supported by the National Natural Science Foundation of China (No.~12131015).}
\date{}

\begin{document}
\raggedbottom
\begin{abstract}
For every nonzero commutative ring $R$ and infinite cardinal $\theta$, we construct a nontopological triangulated category over $R$, whose thick subcategories generated by $<\theta$ objects admit $R$-linear dg enhancements, while the scalar action of $R$ is faithful.
\end{abstract}
\maketitle
\section{Introduction and main result}

A triangulated category is topological if it admits a fully faithful triangle functor into the homotopy category of a stable model category.
The first nontopological example was found by Muro, Schwede and Strickland, who constructed a non-topological triangulation on finitely generated free $\mathbb Z/4$-modules, with identity suspension \cite[Theorem~1]{MSS}.
Their Hopf-map obstruction shows that every triangle functor from this category to a topological triangulated category is zero \cite[Theorem~2]{MSS}.
Rizzardo and Van den Bergh constructed nonenhanceable triangulated categories by gluing algebraic ones along a triangle functor that does not lift to an $A_\infty$-functor \cite[Theorem~1.1]{RVdB}.
Recently D\'iaz Cabrera and Muro constructed a non-topological pretriangulation on finitely generated projective modules over the preprojective algebra $P(\mathbb L_2)$ by twisting the inverse suspension of an algebraic triangulation \cite{DM2026}.
He and Le proved that this pretriangulation satisfies the octahedral axiom over every algebraically closed field \cite[Theorem~5.6]{HL2026}.

For a infinite cardinal $\theta$, we call a triangulated category \emph{$\theta$-dg enhanced} if every thick subcategory generated by a subset $S$ with $|S| < \theta$ is algebraic.
We ask whether triangulated categories with $\theta$-dg enhancement are algebraic or topological.

The following theorem gives a negative answer.

\begin{maintheorem}\phantomsection\label{thm_main}\label{thm_incidence_local_global}
Let $R$ be a nonzero commutative unital ring and let $\theta$ be an infinite cardinal. 
There is an triangulated category $\mathcal T$ such that:
\begin{enumerate}
\item $\mathcal T$ is not topological;
\item $\mathcal T$ is $\theta$-dg enhanced, i.e., for each subset $S$ with $|S| < \theta$, $\operatorname{thick}_{\mathcal T}(S)$ is algebraic;
\item The scalar action $R \to Z(\mathcal T)$ is faithful.
\end{enumerate}
\end{maintheorem}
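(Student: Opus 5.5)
The approach is to take a filtered colimit of algebraic triangulated categories in which each stage is enhanced, but a topological obstruction (a Toda bracket / Hopf-map relation) appears only in the colimit. The keywords (filtered colimits, Toda brackets, Mac Lane cohomology) suggest this shape. Concretely, I would index by a $\theta$-directed poset $I$, for instance the set of subsets of a set $X$ with $|X|\ge\theta$ of cardinality $<\theta$ (or a suitable incidence poset). For each $i\in I$ I would build a small $R$-linear dg (or $A_\infty$) category $\mathcal A_i$, with transition $A_\infty$-functors $\mathcal A_i\to\mathcal A_j$ that are fully faithful on homotopy. These would form a compatible system on homotopy categories, but \emph{not} a strictly compatible system of $A_\infty$-structures. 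The $A_\infty$-structures $m_3$ on $H^*\mathcal A_i$ would be twisted by a Hochschild/Mac Lane cohomology class $c_i$. Restriction along $i\le j$ sends $c_j$ to a class cohomologous to $c_i$, but the choices of homotopies cannot be made coherent over all of $I$. I would then define $\mathcal T=\operatorname{colim}_{i}\operatorname{Ho}(\operatorname{tw}\mathcal A_i)$, taken as a filtered colimit of triangulated categories along fully faithful triangle functors, which is again triangulated.

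The key steps, in order:
\begin{enumerate}
\item Verify that a filtered colimit of triangulated categories along fully faithful exact functors is triangulated, with triangles those coming from some stage, and that $R$ acts centrally.
\item Property (2): a set $S$ with $|S|<\theta$ lies in a single stage $i$ by $\theta$-directedness. Since the transitions are fully faithful, $\operatorname{thick}_{\mathcal T}(S)=\operatorname{thick}_{\operatorname{Ho}\mathcal A_i}(S)$, which is algebraic.
\item Property (3): each stage contains an object, for example a generator with endomorphisms $R$, on which $R$ acts faithfully. This persists in the colimit.
\item Property (1), non-topologicity. Suppose $F\colon\mathcal T\to\operatorname{Ho}(\mathcal M)$ is fully faithful exact, with $\mathcal M$ stable model. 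Restricting to each stage and to thick subcategories gives enhancements that are compatible up to homotopy. The obstruction classes $c_i$ then assemble into an element of $\lim_I$ of a Mac Lane cohomology system.
\end{enumerate}

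I would design the system so that this element must vanish, by showing that $\lim^1$ of the homotopies is nonzero, or that a Toda bracket $\langle \cdot,\cdot,\cdot\rangle$ computed in $\mathcal T$ contradicts the topological identity satisfied by the Hopf-type element $\eta$. The model for the contradiction is the MSS argument that in a topological category the bracket relation $\eta\cdot 2=\ldots$ holds universally over the sphere spectrum.

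The main obstacle is step (4): producing a global invariant of $\mathcal T$ that distinguishes it from every topological category, even though every small piece is algebraic and hence topological. I would first try to make the invariant a Toda bracket living in a \emph{single} object's endomorphism ring. It would be computed through a diagram of cardinality $\ge\theta$, for example a countable homotopy colimit-free but $\theta$-many-object family. Topological enhancements impose a universal ($\mathbb S$-linear) constraint on it, which the twisted system violates. If this fails, I would fall back to showing that no topological enhancement can restrict to the given incoherent family of Mac Lane classes, via a nonvanishing derived-limit computation over $I$.
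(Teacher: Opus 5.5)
Your steps (1)--(3) are routine once suitable stages exist, and they match the paper's framing. The paper indexes by finite $J\subseteq I$, but then shows that every $\mathcal T_J$ with $|J|<\theta$ embeds in a single algebraic $D_\infty(\Lambda_J)$, which is your $\theta$-directed picture. You also need the stages to be idempotent complete, or to have thick images, for $\operatorname{thick}_{\mathcal T}(S)$ to stay inside one stage.

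\textbf{The gap: step (4).} This is the entire content of the theorem, and you never say what the $\mathcal A_i$ or the classes $c_i$ are. Nor do you name an invariant that every topological realisation must satisfy and your $\mathcal T$ violates.

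Your primary plan for (4) cannot work. A Toda bracket, or any MSS-type relation among brackets, involves finitely many objects and the cones built from them. It is therefore computed inside a finitely generated thick subcategory. By (2) that subcategory is algebraic, hence topological, so every such relation holds in $\mathcal T$ exactly as it does topologically. There is no ``Toda bracket computed through a diagram of cardinality $\geq\theta$''. Whatever obstructs topological realisation must be a \emph{coherence} condition imposed on at least $\theta$ objects simultaneously. Your fallback (incoherent homotopies, nonzero $\lim^1$ over $I$) is not developed. As stated, it concerns $R$-linear $A_\infty$-data and gives no handle on arbitrary topological realisations.

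\textbf{What the paper supplies.} It provides exactly three ingredients that your sketch lacks.

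First, a class that is locally trivial but globally nonzero. For $\mathcal C$ the $R$-linearisation of $\omega_3$, Mitchell gives $\operatorname{pd}_{\mathcal C}\underline R=4$. The structure $m_4^+$, built from the syzygy cocycle $c$, is a Hochschild coboundary on every finite $\Gamma\subset\omega_3$, so $\operatorname{Perf}(\Lambda_+)\simeq\operatorname{Perf}(\Lambda_-)$. These local identifications glue coherently, so the obstruction is not an incoherence of transition data. Yet on the $\aleph_3$-fold coproducts $P_p$, the brackets $\langle d_1,d_2,d_3,d_4\rangle$ in $\mathcal V_-$ and $\mathcal V_+$ yield, after composing with $d_0\otimes_{\mathcal C}E$, the classes $0$ and $-[c]\neq0$ in $\operatorname{Ext}^4_{\mathcal C}(\underline R,A)$.

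Second, a gluing of $\theta$ copies of each branch along a common core with almost-constant coefficients $\mathsf L$. Stages with $|J|<\theta$ remain algebraic because $m_4=e_J\cdot m_4^+$, with $e_J\in\mathsf L(R)$, defines one $A_\infty$-category $\Lambda_J$.

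Third, the global invariant: Sagave's universal cocycle $u\in Z^4_{\mathrm{cat}}$ on a small $2$-split subcategory, which \emph{naturally} selects elements of all fourfold brackets. Naturality along $\mathrm{in}_i,\mathrm{pr}_i$ identifies the branch values $\chi(\mathfrak t_i)$ with evaluations of a single class with $\mathsf L$-coefficients. These are constant off a set of size $<\theta$, because only $\aleph_3$ strings occur and $\operatorname{cf}(\theta)>\aleph_3$. But they equal $0$ on $\theta$ indices and $-[c]$ on $\theta$ others.

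Without such a globally defined, natural bracket-selecting invariant, and a construction realising two incompatible values on $\theta$ many indices each, your outline does not prove (1).
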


\begin{corollary}
For any field $k$, there is a nontopological triangulated category whose finitely generated thick subcategories are algebraic.
\end{corollary}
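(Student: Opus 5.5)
The construction will be a filtered colimit of algebraic triangulated categories in which the higher structure that would witness topologicality is twisted by a class that vanishes on every small piece but not globally. Concretely, fix $R$ and an index set $I$ with $|I|\ge\theta$. For each $i\in I$ take a copy of a basic algebraic triangulated category over $R$, for instance $\operatorname{per}(R[\varepsilon]/\varepsilon^2)$, or a category of free modules over a suitable $R$-algebra on which a Toda bracket or Hopf-type obstruction can be detected. The aim is a product-like category $\mathcal T=\bigoplus_{i\in I}\mathcal T_i$ in which each object has finite support, together with a global twist of the triangulation (or of the $A_\infty$-structure) built from a cocycle $c=(c_i)$ in Mac Lane/Hochschild-type category cohomology. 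The cocycle should be a coboundary on any restriction to $<\theta$ indices. Since a Toda bracket is a local invariant, I do not want the twist to be trivial in each summand separately. Instead it should be an "incidence"-type twist, as the label \texttt{thm\_incidence\_local\_global} suggests.

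For that I would index by an incidence structure: a set of points $P$ and blocks $B$, with $|B|\ge\theta$, such that every subfamily of $<\theta$ blocks admits a consistent choice of local trivializations, while the whole family does not. This is a nonvanishing $\check H^1$ or $\lim^1$ phenomenon for a $\theta$-directed system. Objects of $\mathcal T$ are finite sums indexed by points. Hom-sets and the triangulation are those of $\operatorname{per}(A)$, except that the triangle or Massey/Toda structure on a block is twisted by a sign or unit determined by the incidence data.

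The key steps are as follows.

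\textbf{(a)} Verify that $\mathcal T$ is triangulated. Every axiom involves finitely many objects, so it suffices to check the axioms on subcategories supported on finitely many points. There the twist is a coboundary, so the structure is isomorphic to the algebraic one.

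\textbf{(b)} Given $S$ with $|S|<\theta$, observe that $\operatorname{thick}(S)$ is supported on $<\theta$ points (using that $\theta$ is infinite). Trivialize the twist on this support to obtain a triangle equivalence with a thick subcategory of the untwisted $\operatorname{per}(A)$-type category. That target carries an $R$-linear dg enhancement, which gives (2).

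\textbf{(c)} Faithfulness of $R\to Z(\mathcal T)$ is immediate, since $R$ acts faithfully on $\operatorname{Hom}(X,X)\cong A$ for any point object.

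\textbf{(d)} Nontopologicality. Suppose $F\colon\mathcal T\to \operatorname{Ho}(\mathcal M)$ is fully faithful and exact. Topological triangulated categories carry canonical Toda brackets and a canonical universal class (the $\gamma$/Hopf-map relation of \cite{MSS}, i.e.\ the action of $\pi_*^s$) compatible with exact functors. From $F$ one would then get a coherent choice of local trivializations of the twist over all of the incidence structure. Producing this global section would contradict the nonvanishing obstruction.

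I expect step (d) to be the main obstacle. A topological model gives trivializations on each small piece, but showing they are compatible enough to glue requires the topological structure to rigidify: any two trivializations should differ by something detected globally, namely by an element of the stable homotopy action, which is natural and hence uniform across all of $\mathcal T$. I would first try to show that the relevant Toda bracket or $\eta$-action is strictly natural under exact functors into topological categories. With that in hand, the mismatches can be computed blockwise as the cohomology class $[c]\neq0$, giving the contradiction.
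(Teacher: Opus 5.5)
As written, this is a programme rather than a proof. You never specify the category (which algebra, which incidence structure, which twist), and step (d), which carries all the content, is left as something you would try to show. In the paper the corollary is immediate: apply the Main Theorem with $R=k$ and any $\theta$ with $\operatorname{cf}(\theta)>\aleph_3$, say $\theta=\aleph_4$. A finitely generated thick subcategory $\operatorname{thick}(G)$ is generated by one object, so it is algebraic by part (2), and part (1) gives non-topologicality.

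If you want to rebuild the Main Theorem yourself, you are missing two concrete ingredients.
\begin{enumerate}
\item The branches must be linked, since a finite-support sum $\bigoplus_i\mathcal T_i$ of algebraic categories is again algebraic. The paper links them through a common core $\operatorname{Perf}(\Lambda_-(I))$ whose Hom groups are almost-constant functions on $I$; each stage $\mathcal T_J$ adds only finitely many large branches $\mathcal V_{\operatorname{Sign}(i)}$.
\item The local/global discrepancy is not a $\varprojlim^1$ or $\check H^1$ class. It is a pair of minimal $A_\infty$-structures $\Lambda_\pm$ on one graded category over $\omega_3$, differing only in $m_4$. Their perfect categories agree because Hochschild cohomology of every finite $\Gamma\subset\omega_3$ with coefficients in $E$ vanishes (Lemma~\ref{lem_finite_ordinal_acyclicity}, Proposition~\ref{prop_marked_core_descent}). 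After adjoining $\aleph_3$-fold coproducts, however, the fourfold Toda bracket on the resolution of Lemma~\ref{lem_tail_simplex_exactness} detects $0$ versus $-[c]$. Here $[c]\neq0$ because $\operatorname{pd}_{\mathcal C}\underline R=4$ (Lemma~\ref{lem_ordinal_nonvanishing}, Propositions~\ref{prop_branch_toda_cosets} and~\ref{prop_corrected_toda_endpoint}).
\end{enumerate}

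Step (d) also reaches for the wrong invariant and the wrong logic. The Hopf-map relation of Muro--Schwede--Strickland lives on $\operatorname{Cone}(2\cdot 1_X)$. In a $k$-linear category with $\operatorname{char}k\neq2$ this cone is zero, so the relation cannot obstruct anything here. Nor do you need to assemble a global section from local trivializations: a topological realisation supplies one directly. It gives a single zero-normalised cocycle $u$ on a $2$-split subcategory with $u(f_1,\dots,f_4)\in\langle f_1,\dots,f_4\rangle$ (Proposition~\ref{prop_topological_universal_class}).

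The contradiction then comes from a counting argument that your proposal has no analogue of. Pulled back along the diagonal $\Delta$, $u$ has coefficients in the almost-constant functions $\mathsf L(\mathbb E)$. The map $\chi$ only reads the $\aleph_3$ strings in $\omega_3$, so $\operatorname{cf}(\theta)>\aleph_3$ makes $i\mapsto\chi(\mathfrak t_i)$ constant off a set of size $<\theta$ (Proposition~\ref{prop_characteristic_almost_constant}). But comparing $u$ with the branch Toda brackets gives $\chi(\mathfrak t_i)=0$ on $\theta\times\{-\}$ and $\chi(\mathfrak t_i)=-[c]$ on $\theta\times\{+\}$. Your step (d) cannot run without two things: a concrete structure whose Toda brackets separate two kinds of branches sharing a common core, and this mechanism of one uniform class plus a cardinality bound.
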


\begin{remark}
    For every cardinal $\kappa$, neither the class of algebraic triangulated categories nor the class of topological triangulated categories can be characterised by diagram-extension axioms, each involving at most $\kappa$ objects and morphisms.
\end{remark}

By contrast, a pretriangulated category is triangulated if every finitely generated thick subcategory is triangulated with respect to the inherited structure.
Roughly speaking, there are no easy definition for algebraic to topological triangulated categories.

\medskip

We briefly describe the construction, and the obstruction to topological realisation.

First, we define $\mathcal C$ to be the $R$-linearisation of $\omega_3$, the first ordinal $\omega_3$ with cardinality $\aleph_3$.
Then we take a graded $R$-category $\mathcal B$ by adding degree $(-2)$-terms on $\mathcal{H}om$, and define two minimal $A_\infty$-categories $\Lambda_\pm$ on $\mathcal B$, with distinct $m_4$.
Then one has $\operatorname{Perf}(\Lambda_\pm) \subseteq \mathcal V_\pm \subseteq D_\infty(\Lambda_\pm)$, where $\mathcal V_\pm $ are generated by $(< \theta)$ copies of coproducts of representable functors.

Fix $I = \theta \times \{+,-\},\quad \operatorname{Sign}(\xi,b)=b$.
The final category $\mathcal T$ is a colimit of $\mathcal T_J = \left(\prod_{i \in J} \mathcal V_{\operatorname{Sign}(i)}\right) \times \operatorname{Perf}(\Lambda_-(I \setminus J))$ with $|J| < \aleph_0$, i.e., the union of algebraic triangulated categories with ``only finite large branches''. 
Some set-theoritic analysis shows that $\mathcal T$ is $\theta$-dg enhanced.

The topological obstruction is based several ``bridges'' which connects fourfold Toda brackets, Yoneda extensions, and categorical cohomology of $\omega_3$.
Hence the difference of $m^\pm_4$ admits several representations, yielding a map $(\chi(\mathfrak t_i))_{i\in I}$ from $I$.
An analysis on infinite combinatorics shows that $(\chi(\mathfrak t_i))_{i\in I}$ is constant except on a subset $S \subseteq I$ with $|S|< \theta$.
Meanwhile standard computation on triangulated categories shows that $(\chi(\mathfrak t_i))_{i\in I}$ vanishes on $\theta \times \{-\}$, while is nonvanishing on $\theta \times \{+\}$.
This yields the contradiction.

Section~\ref{sec_preliminaries} provides preliminaries;
Section~\ref{sec_field_branches} begins with an outline followed by the proofs.

\section{Preliminaries}\label{sec_preliminaries}

Fix a commutative ring $R$.
Unless otherwise specified, $\otimes$ denotes $\otimes_R$.
We use the Koszul sign rule $(f\otimes g)(x\otimes y)=(-1)^{|g||x|}f(x)\otimes g(y)$ for homogeneous maps and elements.
All $A_\infty$-categories, functors, modules and module morphisms are strictly unital.
The triangulated categories constructed in the main theorem are idempotent complete.

An \emph{$R$-category} is a category whose Hom sets are $R$-modules, and the composition of morphisms is $R$-bilinear.
A functor between $R$-linear categories is \emph{$R$-linear}, if it induces $R$-homomorphisms on Hom groups.

\subsection{Triangulated categories}

We use Verdier's axioms and the usual suspension-compatible triangle functors \cite{Verdier1996}.
Our cone convention is $\operatorname{Cone}(f)^i=N^i\oplus M^{i+1}$ with $d(y,x)=(d_Ny+f(x),-d_Mx)$ for a chain map $f:M\to N$.
Write $Z(\mathcal T)$ for the natural endomorphisms of $\mathrm{id}_{\mathcal T}$ commuting with suspension; an $R$-linear scalar action is faithful when $r\mapsto(r1_X)_X$ defines an injection $R\to Z(\mathcal T)$.
A full triangulated subcategory is \emph{thick} if it is closed under direct summands, and $\operatorname{thick}(\mathcal X)$ denotes the thick subcategory generated by $\mathcal X$.
A thick subcategory $\mathcal T_0$ is \emph{finitely generated} if $\mathcal T_0=\operatorname{thick}(G)$ for some object $G$, called a \emph{classical generator} \cite[\S2.1]{BondalVanDenBergh}.

We use \emph{exact categories} in Quillen's sense \cite[Definition~2.1]{Buhler}.
A \emph{Frobenius exact category} has enough projectives and injectives, with the two classes coinciding; its \emph{stable category}, obtained by quotienting out morphisms factoring through these objects, is triangulated \cite[Chapter~I, Theorem~2.6]{Happel}.

\begin{definition}\label{def_algebraic_category}
    A triangulated category is \emph{algebraic (over $R$)} if it is $R$-linearly triangle equivalent to the stable category of an $R$-linear Frobenius exact category.
\end{definition}

A \emph{model category} is understood in the usual complete and cocomplete sense \cite[Definitions~1.1.3--1.1.4]{HoveyModelCategories}, originating in \cite{Quillen}.
Its \emph{homotopy category} is $\operatorname{Ho}(\mathcal M)=\mathcal M[\mathrm{Weq}^{-1}]$, where $\mathrm{Weq}$ is the class of weak equivalences.
A pointed model category is \emph{stable} if its derived suspension is an equivalence; its homotopy category is then canonically triangulated \cite[Definition~7.1.1 and Proposition~7.1.6]{HoveyModelCategories}.

\begin{definition}\label{def_topological_category}
    A triangulated category is \emph{topological} if it admits a fully faithful triangle functor into $\operatorname{Ho}(\mathcal M)$ for some stable model category $\mathcal M$ \cite{SchwedeComparison}.
\end{definition}

\subsection{\texorpdfstring{$A_\infty$}{A-infinity}-categories and dg categories}
\begin{definition}\label{def_ainfinity_category}
    Following \cite[\S3.1]{KellerAinf}, an \emph{$A_\infty$-category} $\mathcal A$ has $\mathbb Z$-graded $R$-modules\linebreak
    $\mathcal{H}om_{\mathcal A}(X,Y)$ as its Hom sets, with $R$-multi\-linear operations
    \[
        m_n:\mathcal{H}om_{\mathcal A}(X_1,X_0)\otimes\cdots\otimes
        \mathcal{H}om_{\mathcal A}(X_n,X_{n-1})
        \longrightarrow\mathcal{H}om_{\mathcal A}(X_n,X_0)
        \qquad(n\geq1)
    \]
    of degree $2-n$, satisfying Stasheff identities:
    \[
        \sum_{\substack{r+s+t=n, \\ r,t \geq 0,\  s \geq 1}}(-1)^{r+st}
        m_{r+1+t}(\mathrm{id}^{\otimes r}\otimes m_s
        \otimes\mathrm{id}^{\otimes t})=0.
        \qquad(n\geq1).
    \]
    Each object $X$ is equipped with $1_X\in\mathcal{H}om_{\mathcal A}^0(X,X)$, satisfying
    \[
        m_2(1_Y,a)=a=m_2(a,1_X)
        \qquad\bigl(a\in\mathcal{H}om_{\mathcal A}(X,Y)\bigr).
    \]
    For every composable sequence $(a_1,\ldots,a_n)$, one also has
    \[
        m_n(a_1,\ldots,a_n)=0
        \quad\text{if }n\ne2\text{ and some }a_j\text{ is an identity}.
    \]
    In particular, $m_1(1_X)=0$.
\end{definition}

\begin{definition}\label{def_ainfinity_functor}
    An \emph{$A_\infty$-functor} $F:\mathcal A\to\mathcal B$ between $A_\infty$-categories assigns an object $FX\in\mathcal B$ to each $X\in\mathcal A$ and, for each $n\geq1$ and $X_0,\ldots,X_n\in\mathcal A$, an $R$-linear map
    \[
        F_n:\mathcal{H}om_{\mathcal A}(X_1,X_0)\otimes\cdots\otimes
        \mathcal{H}om_{\mathcal A}(X_n,X_{n-1})
        \longrightarrow\mathcal{H}om_{\mathcal B}(FX_n,FX_0)
    \]
    of degree $1-n$, satisfying, for every $n\geq1$,
    \[
    \begin{aligned}
    &\sum_{\substack{r+s+t=n\\r,t\geq0,\ s\geq1}}
    (-1)^{r+st}F_{r+1+t}
    (\mathrm{id}^{\otimes r}\otimes m_s^{\mathcal A}\otimes\mathrm{id}^{\otimes t})\\
    &\qquad=\sum_{\substack{k\geq1,\ i_1+\cdots+i_k=n\\i_1,\ldots,i_k\geq1}}
    (-1)^{\sum_{j=1}^k(k-j)(i_j-1)}
    m_k^{\mathcal B}(F_{i_1}\otimes\cdots\otimes F_{i_k}).
    \end{aligned}
    \]
    Graded tensor products of maps use the Koszul sign rule; see \cite[\S\S3.4,7.3]{KellerAinf}.
    We also require
    \[
        F_1(1_X)=1_{FX},
        \qquad
        F_n(a_1,\ldots,a_n)=0
        \quad\text{if }n>1\text{ and some }a_j\text{ is an identity}.
    \]
\end{definition}

An \emph{$A_\infty$-algebra} is an $A_\infty$-category with one object.
A \emph{morphism of $A_\infty$-algebras} is an $A_\infty$-functor between the corresponding one-object categories.
Such a morphism $f:A\to B$ is a \emph{quasi-isomorphism} if $f_1:(A,m_1)\to(B,m_1)$ induces an isomorphism on cohomology.

An $A_\infty$-category is \emph{minimal} if $m_1=0$ on every morphism space.

\begin{lemma}\label{lem_minimal_cohomology}
For a minimal $A_\infty$-category $\mathcal A$, objects $X,Y\in\mathcal A$ and $d\in\mathbb Z$,
\[
H^d\!\left(\mathcal{H}om_{\mathcal A}(X,Y),m_1\right)
=\mathcal{H}om_{\mathcal A}(X,Y)^d.
\]
\end{lemma}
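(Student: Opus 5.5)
This is immediate from the definitions; the plan is simply to unwind what cohomology means when the differential vanishes. Fix objects $X,Y$ and a degree $d$, and write $V=\mathcal{H}om_{\mathcal A}(X,Y)$, a $\mathbb Z$-graded $R$-module regarded as a complex with differential $m_1$.

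By Definition~\ref{def_ainfinity_category}, $m_1$ has degree $2-1=1$. The Stasheff identity for $n=1$ reads $m_1\circ m_1=0$, so $(V,m_1)$ is a cochain complex. Its cohomology is
\[
H^d(V,m_1)=\ker\bigl(m_1\colon V^d\to V^{d+1}\bigr)\big/\operatorname{im}\bigl(m_1\colon V^{d-1}\to V^d\bigr).
\]

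Minimality says $m_1=0$ on every morphism space, in particular on $V^{d-1}$ and on $V^d$. Hence the kernel is all of $V^d$ and the image is $0$. The quotient is therefore canonically $V^d/0=V^d$, and the identification is induced by the identity map of $V^d$. This is an equality of $R$-modules, since $m_1$ is $R$-linear and so the kernel and image are submodules.

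No step presents a real obstacle. The only point to check is that the grading convention is cohomological, i.e.\ that $m_1$ raises degree by one. This matches the stated degree $2-n$ of $m_n$, so the indices $d-1,d,d+1$ above are the correct ones.
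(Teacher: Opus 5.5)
Your proposal is correct and takes the same approach as the paper, whose proof is the single line ``This follows from $m_1=0$.'' You spell out the kernel/image computation explicitly. The check via the Stasheff identity that $m_1\circ m_1=0$ is harmless but not needed, since $m_1$ vanishes identically.
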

\begin{proof}
This follows from $m_1=0$.
\end{proof}

\begin{definition}\label{def_ainfinity_module}
    A \emph{right $A_\infty$-module} over an $A_\infty$-category $\mathcal A$ assigns a graded $R$-module $M(X)$ to every object $X$, with $R$-linear maps
    \[
    m_n^M:M(X_0)\otimes\mathcal{H}om_{\mathcal A}(X_1,X_0)
       \otimes\cdots\otimes\mathcal{H}om_{\mathcal A}(X_{n-1},X_{n-2})
       \longrightarrow M(X_{n-1})
    \]
    of degree $2-n$ for $n\geq1$, satisfying the module identities of \cite[\S4.2]{KellerAinf} on composable strings.
    We require $m_2^M(x,1_X)=x$, and $m_n^M$ vanishes on inputs containing an identity when $n\ne2$.
    The representable module at $Y$ is $M(X)=\mathcal{H}om_{\mathcal A}(X,Y)$ with the category operations.
\end{definition}

\begin{definition}\label{def_ainfinity_module_morphism}
    A \emph{morphism of right $A_\infty$-modules} $f:L\to M$ has $R$-linear components
    \[
    f_n:L(X_0)\otimes\mathcal{H}om_{\mathcal A}(X_1,X_0)
       \otimes\cdots\otimes\mathcal{H}om_{\mathcal A}(X_{n-1},X_{n-2})
       \longrightarrow M(X_{n-1})
    \]
    of degree $1-n$ for $n\geq1$, with the identities and composition of \cite[\S4.2]{KellerAinf} on composable strings.
    For $n>1$, $f_n$ vanishes on inputs containing an identity.
    It is a \emph{quasi-isomorphism} if $f_1$ induces an isomorphism on cohomology at every object.
\end{definition}

\begin{definition}\label{def_dg_category}\label{def_dg_functor}
    A \emph{differential graded (dg) category} is an $A_\infty$-category with $m_n=0$ for $n\geq3$.
    See \cite[\S2.2]{KellerDG}.
    A \emph{dg functor} between dg categories is an $A_\infty$-functor with $F_n=0$ for $n\geq2$.
    A \emph{dg algebra} is a dg category with one object, and a \emph{morphism of dg algebras} is a dg functor between the corresponding one-object categories.
\end{definition}

Write $C_{\mathrm{dg}}(R)$ for the usual dg category of cochain complexes \cite[\S2.2]{KellerDG}.
\begin{definition}\label{def_dg_module}
    A \emph{right dg module} over $\mathcal A$ is a dg functor $\mathcal A^{\mathrm{op}}\to C_{\mathrm{dg}}(R)$.
    The \emph{representable module} associated to $X$ is denoted by $\mathcal{H}om_{\mathcal A}(-,X)$.
\end{definition}

\subsection{Homotopy categories and enhancements}

For an $A_\infty$-algebra $A$, its \emph{graded cohomology module} is
\[
    H^*(A,m_1)=\coprod_{p\in\mathbb Z}H^p(A,m_1).
\]
The product $[a][b]=[m_2(a,b)]$ makes $H^*(A,m_1)$ a graded associative $R$-algebra with unit $[1_A]$; we also denote the induced product by $m_2$.

The \emph{homotopy category} $H^0(\mathcal A)$ has the objects of $\mathcal A$, morphism groups
\[
    \operatorname{Hom}_{H^0(\mathcal A)}(X,Y)=H^0(\mathcal{H}om_{\mathcal A}(X,Y),m_1),
\]
and composition induced by $m_2$.

\begin{definition}\label{def_pretriangulated_category}
    A dg category is \emph{pretriangulated} if the zero module, all integer shifts of representable modules, and cones of closed degree-zero maps between representable modules are homotopy equivalent to representable modules \cite[\S2.4]{Drinfeld}.
\end{definition}

\begin{definition}\label{def_dg_enhancement}
    An \emph{$R$-linear dg enhancement} of a triangulated category $\mathcal T$ is a pretriangulated dg category $\mathcal A$ with an $R$-linear triangle equivalence $H^0(\mathcal A)\simeq\mathcal T$.
\end{definition}

Every small algebraic triangulated category admits a dg enhancement \cite[Theorem~3.8]{KellerDG}; conversely, $H^0(\mathcal A)$ is algebraic for every pretriangulated $R$-linear dg category $\mathcal A$ \cite[\S4.5]{KellerDG}.

\subsection{Hochschild cohomology}

Let $P$ be a $\mathbb Z$-graded $R$-algebra and $N$ a graded $P$-bimodule whose left and right $R$-actions agree.
For graded $R$-modules $M,N$, write
\[
    \mathcal{H}om_R(M,N)^q=\prod_{d\in\mathbb Z}\operatorname{Hom}_R(M^d,N^{d+q}).
\]
For each $p\geq0$ and $q\in\mathbb Z$, write $C_R^{p,q}(P,N)=\mathcal{H}om_R(P^{\otimes p},N)^q$, where $P^{\otimes 0}=R$.
Define the \emph{Hochschild differential} by
\[
    \delta:C_R^{p,q}(P,N)\longrightarrow C_R^{p+1,q}(P,N),
    \qquad f\longmapsto\delta f,
\]
where, for homogeneous $a_1,\ldots,a_{p+1}\in P$,
\[
    \begin{split}
        &(\delta f)(a_1,\ldots,a_{p+1})
        ={}  (-1)^{q|a_1|}a_1f(a_2,\ldots,a_{p+1}) \\
            & \quad +\sum_{i=1}^{p}(-1)^i
        f(a_1,\ldots,a_{i-1},a_i a_{i+1}, a_{i+2},\ldots,a_{p+1})     +(-1)^{p+1}f(a_1,\ldots,a_p)a_{p+1}.
    \end{split}
\]
For $p=0$, we identify $f\in C_R^{0,q}(P,N)$ with $f(1)\in N^q$, so that $(\delta f)(a)=(-1)^{q|a|}af-fa$.
Then $\delta^2=0$, and the \emph{Hochschild cohomology} is $\operatorname{HH}_R^{p,q}(P,N)
    =H^p\bigl(C_R^{\bullet,q}(P,N),\delta\bigr)$; \cite[\S2]{Muro}.
    
For algebras and bimodules concentrated in degree zero, write $\operatorname{HH}_R^p(P,N)=\operatorname{HH}_R^{p,0}(P,N)$.
For an $R$-linear category $\mathcal C$ and a bimodule $E$, the same definitions use products over composable object strings, with $R$-multilinear cochains valued in $E$ and the same end actions and signs. We write $C_R^p(\mathcal C,E)$ and $\operatorname{HH}_R^p(\mathcal C,E)$ when $\mathcal C$ and $E$ are concentrated in degree zero. Normalized cochains vanish on identity inputs.

\subsection{Twisted complexes}

Finite twisted complexes to compute homotopy categories in Proposition~\ref{prop_marked_core_descent}.
Let $\mathcal A$ be an $A_\infty$-category, with the conventions of \cite[\S\S3.6,7.6]{KellerAinf}.

The objects of $\Sigma\mathcal A$ are formal finite coproducts $E=\bigoplus_{i=1}^N X_i[a_i]$, with $X_i\in\mathcal A$ and $a_i\in\mathbb Z$.
Write $R[a]$ for the rank-one graded free $R$-module concentrated in degree $-a$.
Set
\[
    \mathcal{H}om_{\Sigma\mathcal A}(X[a],Y[b])
    =\mathcal{H}om_R(R[a],R[b])\otimes\mathcal{H}om_{\mathcal A}(X,Y)
    \cong\mathcal{H}om_{\mathcal A}(X,Y)[b-a].
\]
For homogeneous $u_i:R[a_i]\to R[a_{i-1}]$ and $f_i\in\mathcal{H}om_{\mathcal A}(X_i,X_{i-1})$, define
\[
    m_n^{\Sigma\mathcal A}(u_1\otimes f_1,\ldots,u_n\otimes f_n)
    =(-1)^{\sum_{i<j}|f_i||u_j|+(2-n)\sum_i|u_i|}\cdot(u_1\cdots u_n)\otimes m_n^{\mathcal A}(f_1,\ldots,f_n),
\]
where $|u_i|=a_i-a_{i-1}$.
Morphisms between finite coproducts are matrices, and the operations extend by summing over composable matrix entries.

For a $\mathbb Z$-graded $R$-module $V$, write $s:V\to V[1]$ for the canonical map of degree $-1$, where $V[1]^d=V^{d+1}$.
For $n\geq1$ and $E_0,\ldots,E_n\in\Sigma\mathcal A$, define the degree-one map $b_n$ by the commutative diagram
\[
    \xymatrix@C=3em@R=1em{
        \displaystyle\bigotimes_{i=1}^n\mathcal{H}om_{\Sigma\mathcal A}(E_i,E_{i-1})
        \ar[r]^-{m_n^{\Sigma\mathcal A}} \ar[d]_-{s^{\otimes n}}
        & \mathcal{H}om_{\Sigma\mathcal A}(E_n,E_0) \ar[d]^-s \\
        \displaystyle\bigotimes_{i=1}^n\bigl(\mathcal{H}om_{\Sigma\mathcal A}(E_i,E_{i-1})[1]\bigr)
        \ar[r]^-{b_n}
        & \mathcal{H}om_{\Sigma\mathcal A}(E_n,E_0)[1].
    }
\]

\begin{definition}\label{def_twisted_complex}
    A \emph{finite twisted complex} over $\mathcal A$ is a pair $(E,q)$, where $E=\bigoplus_{i=1}^N X_i[a_i]\in\Sigma\mathcal A$ and $q=(q_{ij})_{1\leq i,j\leq N}\in\bigl(\mathcal{H}om_{\Sigma\mathcal A}(E,E)[1]\bigr)^0$ is a matrix with
    \[
        q_{ij}\in\bigl(\mathcal{H}om_{\Sigma\mathcal A}(X_j[a_j],X_i[a_i])[1]\bigr)^0,
        \qquad q_{ij}=0\quad(i\geq j).
    \]
    We require $q$ to satisfy the \emph{Maurer--Cartan equation} $\sum_{n\geq1}b_n(q^{\otimes n})=0$ \cite[\S7.6 and \S8.6]{KellerAinf}.
    Since $q_{ij}=0$ for $i\geq j$, one has $b_n(q^{\otimes n})=0$ for $n\geq N$, so the sum is finite.
\end{definition}

The $A_\infty$-category $\operatorname{Tw}(\mathcal A)$ has finite twisted complexes as objects and graded morphism modules $\mathcal{H}om_{\operatorname{Tw}(\mathcal A)}((E,q),(F,r)) =\mathcal{H}om_{\Sigma\mathcal A}(E,F)$.
For $n\geq1$, objects $(E_i,q_i)$ and $x_i\in\mathcal{H}om_{\Sigma\mathcal A}(E_i,E_{i-1})[1]$. Suspended operations are obtained by inserting twists in all positions:
\[
    b_n^{\operatorname{Tw}}(x_1,\ldots,x_n)
    =\sum_{j_0,\ldots,j_n\geq0}
    b_{n+j_0+\cdots+j_n}(q_0^{\otimes j_0},x_1,q_1^{\otimes j_1},\ldots,x_n,q_n^{\otimes j_n}).
\]
A zeroth tensor power inserts no twist, and the triangularity condition makes each sum finite.
The higher compositions are determined by the commutative diagram
\[
    \xymatrix@C=3em@R=1em{
        \displaystyle\bigotimes_{i=1}^n\mathcal{H}om_{\Sigma\mathcal A}(E_i,E_{i-1})
        \ar[r]^-{m_n^{\operatorname{Tw}}} \ar[d]_-{s^{\otimes n}}
        & \mathcal{H}om_{\Sigma\mathcal A}(E_n,E_0) \ar[d]^-s \\
        \displaystyle\bigotimes_{i=1}^n\bigl(\mathcal{H}om_{\Sigma\mathcal A}(E_i,E_{i-1})[1]\bigr)
        \ar[r]^-{b_n^{\operatorname{Tw}}}
        & \mathcal{H}om_{\Sigma\mathcal A}(E_n,E_0)[1].
    }
\]
The Maurer--Cartan equations ensure the $A_\infty$-identities \cite[\S8.6]{KellerAinf}.
The units are diagonal identity matrices, and $X\mapsto(X,0)$ gives a fully faithful $A_\infty$-embedding $\mathcal A\hookrightarrow\operatorname{Tw}(\mathcal A)$ whose components of order $n\geq2$ vanish.

If $R$ is a field, then $H^0(\operatorname{Tw}(\mathcal A))$ is triangulated, with suspension and distinguished triangles induced by shifts and cones of twisted complexes \cite[\S\S7.5--7.6]{KellerAinf}.

For the categories used below, the same construction embeds in the dg category of strictly unital modules over $R$; shifts and cones give the required triangulation. The arbitrary-ring homotopy Yoneda comparison is used explicitly in Lemma~\ref{lem_large_matrix_realization}. We write $\operatorname{Tw}(\mathcal A)=\mathrm{tw}(\mathcal A)$ and $\operatorname{Perf}(\mathcal A)$ for the idempotent completion of $H^0\operatorname{Tw}(\mathcal A)$.

\begin{proposition}[{\cite[\S11.20, \S12.1 and Proposition~13.19]{BLMPretriangulated}}]
\label{prop_twisted_functor_extension}
Every strictly unital $A_\infty$-functor
$f:\mathcal A\to\mathcal B$ extends to an $A_\infty$-functor
\[
\operatorname{Tw}(f):\operatorname{Tw}(\mathcal A)
\longrightarrow\operatorname{Tw}(\mathcal B),
\]
which induces a triangle functor on $H^0$.
\end{proposition}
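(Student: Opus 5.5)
The plan is to build $\operatorname{Tw}(f)$ explicitly, by pushing Maurer--Cartan elements forward along $f$. Afterwards I will check the $A_\infty$-identities and the compatibility with shifts and cones on $H^0$.

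\textbf{Step 1: the functor on $\Sigma\mathcal A$.} First extend $f$ to $\Sigma f:\Sigma\mathcal A\to\Sigma\mathcal B$. On objects, $\bigoplus X_i[a_i]\mapsto\bigoplus fX_i[a_i]$. On morphisms, set
\[
(\Sigma f)_n(u_1\otimes f_1,\ldots,u_n\otimes f_n)=\pm(u_1\cdots u_n)\otimes f_n(f_1,\ldots,f_n),
\]
using the same sign pattern as in the definition of $m_n^{\Sigma\mathcal A}$, extended to matrices by summing over composable entries. The $A_\infty$-functor identities then follow formally from those of $f$, since the $u_i$ are just rank-one shift maps. The work here is sign bookkeeping, which is cleanest in the bar (suspended) formalism with $b_n$ and $F^b_n=s\circ F_n\circ(s^{-1})^{\otimes n}$.

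\textbf{Step 2: objects of $\operatorname{Tw}$.} For a twisted complex $(E,q)$, set
\[
\operatorname{Tw}(f)(E,q)=\Bigl(\Sigma f(E),\ \textstyle\sum_{k\ge1}F^b_k(q^{\otimes k})\Bigr).
\]
The sum is finite and strictly upper triangular because $q$ is. The Maurer--Cartan equation for the new twist follows from the standard computation: apply the bar-coalgebra morphism $\widehat F$ to the group-like element $e^q=\sum_k q^{\otimes k}$. Since $\widehat F$ commutes with the bar differentials, it sends a solution of $\widehat b(e^q)=0$ to a solution. I expect this part to be routine.

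\textbf{Step 3: morphisms of $\operatorname{Tw}$.} Define
\[
\operatorname{Tw}(f)^b_n(x_1,\ldots,x_n)=\sum F^b_{n+\sum j_i}\bigl(q_0^{\otimes j_0},x_1,q_1^{\otimes j_1},\ldots,x_n,q_n^{\otimes j_n}\bigr).
\]
The $A_\infty$-functor identities follow by expanding both sides and regrouping terms. On one side, the twists inside the $F$'s reassemble into the pushed-forward twists $f_*q_i$. On the other side, the Maurer--Cartan terms cancel exactly as in the proof that $\operatorname{Tw}(\mathcal A)$ is $A_\infty$ \cite[\S8.6]{KellerAinf}. Strict unitality is preserved because $F_n$ vanishes on identities for $n>1$, so $\operatorname{Tw}(f)_1(1)=1$ and the higher components also kill units. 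This regrouping, together with the unitality check over an arbitrary commutative ring $R$ (with no projectivity or field hypothesis), is the main obstacle. The plan is to avoid all homological input and work purely at the level of bar constructions, as in \cite[\S11.20, \S12.1]{BLMPretriangulated}.

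\textbf{Step 4: triangle functor.} By construction, $H^0\operatorname{Tw}(f)$ commutes strictly with shifts, $(E,q)[1]\mapsto(\Sigma fE,f_*q)[1]$. It also sends the cone twisted complex of a closed degree-zero map $g$ to the cone of $\operatorname{Tw}(f)_1(g)$. This holds because the twist of $\operatorname{Cone}(g)$ has off-diagonal entry $g$, and pushing it forward yields exactly $\operatorname{Tw}(f)_1(g)$ plus the pushed-forward diagonal twists. Since distinguished triangles in $H^0\operatorname{Tw}$ are those isomorphic to standard cone triangles, the functor preserves them, as in \cite[Proposition~13.19]{BLMPretriangulated}.
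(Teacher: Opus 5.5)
Your proposal is correct. It is the standard construction that the paper does not reprove but cites from Bespalov--Lyubashenko--Manzyuk: push twists forward by $q\mapsto\sum_k F^b_k(q^{\otimes k})$, define $\operatorname{Tw}(f)^b_n$ by inserting twists in all slots, and note that shifts and cone twisted complexes go to shifts and cones on the nose. One detail should be made explicit in Step~4. Strict unitality of $f$ also gives $\operatorname{Tw}(f)_1(\iota)=\iota$ and $\operatorname{Tw}(f)_1(\pi)=\pi$ for the inclusion $\iota$ and projection $\pi$ of the cone, since every term with an inserted twist contains an identity entry. Hence the whole standard triangle, not only its cone object, is carried to the standard triangle of $\operatorname{Tw}(f)_1(g)$.
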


\subsection{Fourfold Toda brackets}

Let $\mathcal T$ be a triangulated category.
We use the fourfold Toda bracket to distinguish the two branches in Proposition~\ref{prop_branch_toda_cosets}.

\begin{definition}[{\cite[Definitions~4.2 and~4.4]{Sagave}}]\label{def_fourfold_bracket}
    Consider a five-term complex in $\mathcal T$,
    \[
        X_4\xlongrightarrow{d_4}X_3\xlongrightarrow{d_3}X_2
        \xlongrightarrow{d_2}X_1\xlongrightarrow{d_1}X_0.
    \]
    The \emph{fourfold Toda bracket} $\langle d_1,d_2,d_3,d_4\rangle$ consists of the morphisms $\gamma:X_4[2]\to X_0$ for which there exist objects $C,Y$ and maps satisfying the following conditions:
    \begin{enumerate}
        \item The morphism $\gamma$ factors as $X_4[2]\xlongrightarrow{\alpha}Y\xlongrightarrow{\beta}X_0$ for some $\gamma=\beta\circ\alpha$
        \item The following triangles are distinguished:
              \[
                  X_1\xlongrightarrow{u}C\xlongrightarrow{q}X_2[1]\xlongrightarrow{d_2[1]}X_1[1],\qquad
                  C\xlongrightarrow{v}Y\xlongrightarrow{p}X_3[2]\xlongrightarrow{\delta}C[1].
              \]
        \item The following three diagrams commute:
              \[
                  \xymatrix@C=3em@R=1.5em{
                      X_3[2] \ar[r]^-\delta \ar[dr]_-{d_3[2]}
                      & C[1] \ar[d]^-{q[1]}
                      & X_4[2] \ar[r]^-\alpha \ar[dr]_-{d_4[2]}
                      & Y \ar[d]^-p
                      & X_1 \ar[r]^-{v\circ u} \ar[dr]_-{d_1}
                      & Y \ar[d]^-\beta \\
                      & X_2[2] & & X_3[2] & & X_0
                  }
              \]
    \end{enumerate}
    We regard $\langle d_1,d_2,d_3,d_4\rangle$ as a subset of $\operatorname{Hom}_{\mathcal T}(X_4,X_0[-2])$.
\end{definition}

    A full additive subcategory $\mathcal U\subseteq\mathcal T$ is \emph{$2$-split} if $\operatorname{Hom}_{\mathcal T}(P,Q[r])=0$ for all $P,Q\in\mathcal U$ and every odd integer $r$.
    The subcategory need not be closed under suspension.

\begin{proposition}[{\cite[Proposition~4.10]{Sagave}}]\label{prop_fourfold_indeterminacy}
    If $X_0,\ldots,X_4$ in Definition~\ref{def_fourfold_bracket} belong to a $2$-split subcategory, their fourfold Toda bracket is a nonempty coset of
    \[
        \operatorname{im}\operatorname{Hom}_{\mathcal T}(X_4,d_1[-2])
        +\operatorname{im}\operatorname{Hom}_{\mathcal T}(d_4,X_0[-2])
        \subseteq\operatorname{Hom}_{\mathcal T}(X_4,X_0[-2]).
    \]
\end{proposition}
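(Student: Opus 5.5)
We follow the standard obstruction-theoretic argument: build the data of Definition~\ref{def_fourfold_bracket} step by step, and use the $2$-split hypothesis to kill every obstruction group and every indeterminacy group of odd degree. Throughout we identify $\operatorname{Hom}_{\mathcal T}(X_4,X_0[-2])$ with $\operatorname{Hom}_{\mathcal T}(X_4[2],X_0)$. We repeatedly use that $\operatorname{Hom}(X_i[a],X_j[b])=\operatorname{Hom}(X_i,X_j[b-a])=0$ whenever $b-a$ is odd.

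\emph{Existence.} First choose a distinguished triangle $X_2\xrightarrow{d_2}X_1\xrightarrow{u}C\xrightarrow{q}X_2[1]$. Its rotation is the first triangle required in the definition, and $C$ is unique up to isomorphism. Next apply $\operatorname{Hom}(X_3[2],-)$ to the shifted triangle. Since $d_2[2]\circ d_3[2]=0$, the map $d_3[2]$ lifts to some $\delta:X_3[2]\to C[1]$ with $q[1]\delta=d_3[2]$. Two lifts differ by an element of $u[1]\circ\operatorname{Hom}(X_3[2],X_1[1])$, and this group vanishes because it sits in odd degree. Hence $\delta$ is unique, and so is $Y=\operatorname{Cone}(\delta)[-1]$, up to isomorphism.

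To lift $d_4[2]$ along $p$, we must show $\delta\circ d_4[2]=0$. We have $q[1]\delta d_4[2]=(d_3d_4)[2]=0$, so $\delta d_4[2]$ factors through $u[1]$, via a map in $\operatorname{Hom}(X_4[2],X_1[1])=0$. Hence $\delta d_4[2]=0$, and some $\alpha$ with $p\alpha=d_4[2]$ exists.

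Similarly, $d_1d_2=0$ gives $\beta':C\to X_0$ with $\beta'u=d_1$. To extend $\beta'$ along $v$ we need $\beta'\circ\delta[-1]=0$. This composite lies in $\operatorname{Hom}(X_3[1],X_0)$, which is an odd group and hence zero. So $\beta$ with $\beta v=\beta'$ exists, and $\beta vu=d_1$. Thus the bracket is nonempty.

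\emph{Indeterminacy.} Since all triangles are unique up to (non-unique) isomorphism, and transporting the data along such isomorphisms does not change $\gamma$, we may fix $C,Y,u,q,v,p,\delta$. The remaining choices are $\beta'$, $\alpha$ and $\beta$.
\begin{enumerate}
\item $\beta'$ is determined modulo $\operatorname{Hom}(X_2[1],X_0)\circ q=0$, so it is unique.
\item $\alpha$ is determined modulo $v\circ c$ with $c\in\operatorname{Hom}(X_4[2],C)$. The exact sequence $\operatorname{Hom}(X_4[2],X_1)\to\operatorname{Hom}(X_4[2],C)\to\operatorname{Hom}(X_4[2],X_2[1])=0$ gives $c=uc'$. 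So $\gamma$ changes by $\beta vuc'=d_1c'$, which ranges over all of $\operatorname{im}\operatorname{Hom}(X_4,d_1[-2])$.
\item $\beta$ is determined modulo $e\circ p$ with $e\in\operatorname{Hom}(X_3[2],X_0)$. So $\gamma$ changes by $ep\alpha=e\,d_4[2]$, which ranges over all of $\operatorname{im}\operatorname{Hom}(d_4,X_0[-2])$.
\end{enumerate}
The choices of $\alpha$ and $\beta$ are independent, so the bracket is exactly a coset of the sum of these two images.

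The main point needing care is the well-definedness under the choice of triangles. We must check that an isomorphism of triangles $C\cong C'$ compatible with $u,q$ carries $\delta$ to $\delta'$ (by the uniqueness of $\delta$), and then extends to an isomorphism $Y\cong Y'$ compatible with $v,p$. The latter follows because the ambiguity in filling the morphism of triangles again lies in odd Hom groups. We would verify this with the same long exact sequence arguments, taking care with the suspension signs.
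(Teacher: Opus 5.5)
Your argument is correct and complete up to two small points. It also does more than the paper, which gives no proof of this statement and simply cites \cite[Proposition~4.10]{Sagave}. Your route has two stages. First you build $C$, then the forced lift $\delta$, then $Y$, $\alpha$ and $\beta$ from the long exact Hom sequences. Second you read off the indeterminacy from $\ker p_*=\operatorname{im}v_*$ and $\ker v^*=\operatorname{im}p^*$. This is self-contained and works directly with the paper's Definition~\ref{def_fourfold_bracket}, so no conventions need to be translated from Sagave's setting. It also shows exactly where $2$-splitness enters, namely the vanishing of five groups:
\begin{itemize}
\item $\operatorname{Hom}(X_3[2],X_1[1])$, for uniqueness of $\delta$;
\item $\operatorname{Hom}(X_4[2],X_1[1])$, for existence of $\alpha$;
\item $\operatorname{Hom}(X_3[1],X_0)$, for existence of $\beta$;
\item $\operatorname{Hom}(X_2[1],X_0)$, for uniqueness of $\beta'$;
\item $\operatorname{Hom}(X_4[2],X_2[1])$, for writing $c=uc'$.
\end{itemize}
This is worth having, since the paper applies the statement in $\mathcal V_b$, where $2$-splitness is checked by hand.

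The first point is a sign. By Verdier's rotation axiom, rotating $X_2\xrightarrow{d_2}X_1\xrightarrow{u}C\xrightarrow{q}X_2[1]$ produces the third map $-d_2[1]$, not $d_2[1]$. So you should start from a triangle on $-d_2$, or equivalently replace $q$ by $-q$. Nothing else in your argument changes.

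The second point concerns the compatible isomorphism $\psi:Y\to Y'$ with $\psi v=v'\phi$ and $p'\psi=p$. It does not come from vanishing of an ambiguity group. It comes directly from TR3 applied to the rotated triangles $X_3[1]\xrightarrow{-\delta[-1]}C\xrightarrow{v}Y\xrightarrow{p}X_3[2]$ and their primed versions, followed by the five lemma. The square needed for TR3 commutes because $\phi[1]\delta=\delta'$, and that identity is where odd-degree vanishing is used, via uniqueness of $\delta'$. With $\psi$ in hand, $(\psi\alpha,\beta\psi^{-1})$ is admissible for the primed data and gives the same $\gamma$. This completes your independence-of-choices step.
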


\begin{remark}\label{rem_toda_functoriality}
    Via its suspension isomorphism, a triangle functor $F:\mathcal T\to\mathcal T'$ satisfies
    \[
        F\bigl(\langle d_1,d_2,d_3,d_4\rangle\bigr)
        \subseteq\langle Fd_1,Fd_2,Fd_3,Fd_4\rangle.
    \]
    Equality holds if $F$ is fully faithful and $X_0,\ldots,X_4$ belong to a $2$-split subcategory.
\end{remark}

\subsection{Category cohomology and characteristic classes}\label{sec_maclane_eventuality}

We introduce the category cohomology and characteristic maps used to prove show topological obstructions.

\begin{definition}[{\cite[\S2.1]{Sagave}}]\label{def_category_cohomology}
    Let $\mathcal P$ be a small category and $\mathbb E:\mathcal P^{\mathrm{op}}\times\mathcal P\to\mathrm{Ab}$ a bifunctor.
    The cochain complex $C^\bullet(\mathcal P;\mathbb E)$ has terms
    \[
            C^0(\mathcal P;\mathbb E)=\prod_{P\in\operatorname{Ob}(\mathcal P)}\mathbb E(P,P);\qquad
            C^n(\mathcal P;\mathbb E)=\prod_{P_n\xrightarrow{f_n}\cdots\xrightarrow{f_1}P_0}\mathbb E(P_n,P_0),\quad (n\geq1),
    \]
    where the second product ranges over all strings of $n$ composable morphisms in $\mathcal P$.
    For $c\in C^n(\mathcal P;\mathbb E)$ with $n\geq1$, define $\delta c\in C^{n+1}(\mathcal P;\mathbb E)$ by
    \[
        \begin{split}
            &(\delta c)(f_1,\ldots,f_{n+1})
            ={}  \mathbb E(1,f_1)\bigl(c(f_2,\ldots,f_{n+1})\bigr) \\
                 &\qquad +\sum_{j=1}^n(-1)^j
            c(f_1,\ldots,f_j\circ f_{j+1},\ldots,f_{n+1})  +(-1)^{n+1}\mathbb E(f_{n+1},1)\bigl(c(f_1,\ldots,f_n)\bigr).
        \end{split}
    \]
    In degree zero, $(\delta c)(f:P_1\to P_0)=\mathbb E(1,f)c(P_1)-\mathbb E(f,1)c(P_0)$.

    The \emph{cohomology groups of $\mathcal P$ with coefficients in $\mathbb E$} are $H^n(\mathcal P;\mathbb E)
=H^n(C^\bullet(\mathcal P;\mathbb E),\delta)$.
\end{definition}

No additivity condition is imposed on the cochains.
A cochain on a category with a zero object is \emph{zero-normalised}, if it vanishes whenever an input morphism is zero.

We also write $H_{\mathrm{cat}}^n(\mathcal P,\mathbb E)=H^n(\mathcal P;\mathbb E)$. The characteristic map for the ringoid used below is constructed in Definition~\ref{def_characteristic_map}.

All size statements are relative to fixed universes; essentially small categories are replaced by small equivalent categories when necessary. We use $\oplus$ for finite coproducts and $\coprod$ for arbitrary coproducts. Modules over ordinary ringoids are right modules.

\section{The main construction}\label{sec_field_branches}

\subsection{Outline}

Let $R$ be a nonzero commutative ring.
Fix an infinite cardinal $\theta$ with $\operatorname{cf}(\theta) > \aleph_3$.
We write $\omega_3$ for the initial ordinal of cardinality $\aleph_3$, so $\theta>\aleph_3=|\omega_3|$.

\paragraph{The construction of $\mathcal T_J$'s.}
Let $\mathcal C$ be the $R$-linearisation of the poset category $\omega_3$, i.e., the objects are unchanged but Hom sets are $R$-enriched.
Let $\underline R : \mathcal C^{\mathrm{op}} \to \mathrm{Mod}_R$ be the constant presheaf taking the value $R$. Subsection~\ref{subsec_ordinal_class} takes the $4$-th syzygy $A$ of $\underline R$
\[
\xymatrix@C=1.5em@R=.5em{
\cdots \ar[r]
& P_4 \ar[rr] \ar@{->>}[dr]_-c
&& P_3 \ar[r]
& P_2 \ar[r]
& P_1 \ar[r]
& P_0 \ar@{->>}[r]
& \underline R
\\
&& A \ \ar@{>->}[ur]
}.
\]
Define the bimodule $E: \mathcal C^{\mathrm{op}} \otimes _R \mathcal C \to \mathrm{Mod}_R$ via $E(\alpha,\beta)=A(\alpha)$.
Then $c:P_4\twoheadrightarrow A$ induces $0 \neq [c]\in\operatorname{Ext}_{\mathcal C}^4(\underline R,A)$ by Lemma~\ref{lem_ordinal_nonvanishing}.
Consider the graded category $\mathcal B$ with
\[
\operatorname{Ob}(\mathcal B) = \operatorname{Ob}(\mathcal C) = \omega_3, \qquad  \mathcal{H}om_{\mathcal B}(\alpha,\beta)^d=
\begin{cases}
\operatorname{Hom}_{\mathcal C}(\alpha,\beta),&d=0,\\
E(\alpha,\beta)=A(\alpha),&d=-2,\\
0,&\text{otherwise}.
\end{cases}
\]
We identify $\mathcal{H}om_{\mathcal B}(\alpha,\beta) \cong \operatorname{Hom}_{\mathcal C}(\alpha,\beta) \oplus E(\alpha,\beta)$, and the composition becomes
\[
(g,y)\circ(f,x)
=\bigl(g\circ f,\; g\cdot x + y \cdot f\bigr).
\]
Here, for $f:\alpha\to\beta$ and $g:\beta\to\gamma$, by definition one has
\[
g\cdot x
=\begin{cases}
 s x,&\beta\leq\gamma,\quad (g=s u_{\beta,\gamma}),\\
 0,&\beta>\gamma,
\end{cases}; \qquad
y \cdot f
=\begin{cases}
 r A(u_{\alpha,\beta})(y),
   &\alpha\leq\beta,\quad (f=r u_{\alpha,\beta}),\\
 0,&\alpha>\beta.
\end{cases}
\]
Note that $\mathcal B$ has usual $A_\infty$-structure, with $m_2$ the usual multiplication and $m_{\neq 2}=0$, to obtain $\Lambda_-$;
meanwhile Lemma~\ref{lem_square_zero_detector} gives another $A_\infty$-structure on $\mathcal B$ with $m_2$ the usual multiplication, $m_4=m_4^{+}$, and all other operations zero, to obtain $\Lambda_+$.
For either $b\in\{+,-\}$, let $D_\infty(\Lambda_b)$ denote the derived category by localising its right $A_\infty$-modules at quasi-isomorphisms.
Put $\mathcal V_b=\operatorname{thick}_{D_\infty(\Lambda_b)}(\mathfrak G_b)$, where $\mathfrak G_b$ is the coproduct of $|\omega_3|$ copies of each representable functors $\mathcal{H}om_{\Lambda_b}(-,\alpha)$'s.

By Proposition~\ref{prop_marked_core_descent}, $\operatorname{Perf}(\Lambda_-)\simeq\operatorname{Perf}(\Lambda_+)$;
however, the larger categories $\mathcal V_\pm$ detect the difference between the two $m_4$-operations:
the fourfold Toda brackets associated with the same projective resolution determine the distinct classes $0$ and $-[c]$ in $\operatorname{Ext}_{\mathcal C}^4(\underline R,A)$.

Fix $I = \theta \times \{+,-\}$ with projection $\operatorname{Sign} : I \to \{+,-\}$.
For finite subset $J \subseteq I$, take
\[
\mathcal T_J
=\left(\prod_{i\in J}\mathcal V_{\operatorname{Sign}(i)}\right)
\times\operatorname{Perf}(\Lambda_-(I\setminus J)), \qquad |J|<\aleph_0 \qquad (\text{see Equation~\eqref{eq_TJ}}).
\]
We shall specify a filtered system in Subsection~\ref{subsec_function_coefficients}.
Take its colimit $\mathcal T = \varinjlim\limits_{J \subseteq I, \ |J| < \aleph_0} \mathcal T_J$.

\paragraph{Each $<\theta$-generated category is algebraic.}
Let $(X_s)_{s\in S}$ be a family of objects of $\mathcal T$ with $|S|<\theta$. 
Fix $X_s \in \mathcal T_{J_s}$ for a finite subset $J_s\subseteq I$.
Then $J=\bigcup_{s\in S}J_s$.
Since $|J|< \theta$, Proposition~\ref{prop_small_stage_inclusion} gives an $R$-linear dg enhancement.

\paragraph{However $\mathcal T$ is not topological.}
Suppose that $\mathcal T$ admits a topological realisation.
Proposition~\ref{prop_topological_universal_class} gives a Toda class $\mathfrak t=[u]$.
For each $i\in I$, let $\mathfrak t_i$ be the branch class obtained from $\mathfrak t$ as in Lemma~\ref{lem_corner_prism_naturality}, and $\chi$ the characteristic map of Definition~\ref{def_characteristic_map}.

On one hand, 
Lemma~\ref{lem_finite_large_characteristic}
and Proposition~\ref{prop_corrected_toda_endpoint} give
\[
\chi(\mathfrak t_i)=
\begin{cases}
0,&\operatorname{Sign}(i)=-,\\
-[c],&\operatorname{Sign}(i)=+.
\end{cases}
\]

On the other hand, $(\chi(\mathfrak t_i))_{i \in I} \in\prod_{i\in I} \operatorname{Ext}^4_{\mathcal C} (\underline R, A)$ is constant except for a subset $S \subseteq I$ with $|S| < \theta$ (Proposition~\ref{prop_characteristic_almost_constant}).

These lead a contradiction.

\paragraph{Faithful central action by $R$.}
The constant scalar functions act faithfully on every $\operatorname{Perf}(\mathcal B)$. Therefore $R$ acts on $Z(\mathcal T)$ faithfully.

\subsection{Two distinct \texorpdfstring{$A_\infty$}{} structures over \texorpdfstring{$\mathcal B$}{}}\label{sec_two_structure}
\label{subsec_ordinal_class}

\begin{lemma}\label{lem_tail_simplex_exactness}
For each $\beta<\omega_3$, the following sequence is contractible
\[
\cdots\longrightarrow
\coprod_{\beta\leq\alpha_0<\alpha_1<\alpha_2<\omega_3}R
\xlongrightarrow{d_2}
\coprod_{\beta\leq\alpha_0<\alpha_1<\omega_3}R
\xlongrightarrow{d_1}
\coprod_{\beta\leq\alpha_0<\omega_3}R
\xlongrightarrow{d_0}R\longrightarrow0.
\]
Writing $\alpha_0,\ldots,\alpha_p$ as coordinate basis of the corresponding summand, the differentials are
\[
d_p[\alpha_0,\ldots,\alpha_p]
=\sum_{j=0}^{p}(-1)^j
[\alpha_0,\ldots,\widehat{\alpha_j},\ldots,\alpha_p]
\quad(p\geq1),
\qquad d_0[\alpha_0]=1.
\]
\end{lemma}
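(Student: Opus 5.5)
We will exhibit an explicit contracting homotopy given by coning off the minimal vertex $\beta$. This is the standard argument that the simplicial chain complex of a simplex-like object with a cone point, augmented by $d_0$, is acyclic. Write $C_p=\coprod_{\beta\leq\alpha_0<\cdots<\alpha_p<\omega_3}R$ for $p\geq0$ and $C_{-1}=R$, with $d_0:C_0\to C_{-1}$ the augmentation. First we check that $d_{p-1}d_p=0$ for $p\geq1$, including the augmentation step $d_0d_1[\alpha_0,\alpha_1]=1-1=0$. This is the usual cancellation of paired face deletions, so the display is a chain complex.

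Define $h_{-1}:C_{-1}\to C_0$ by $h_{-1}(1)=[\beta]$. For $p\geq0$ define $h_p:C_p\to C_{p+1}$ on basis elements by
\[
h_p[\alpha_0,\ldots,\alpha_p]=
\begin{cases}
[\beta,\alpha_0,\ldots,\alpha_p],&\alpha_0>\beta,\\
0,&\alpha_0=\beta.
\end{cases}
\]
This is well defined because every basis simplex satisfies $\beta\leq\alpha_0$, so prepending $\beta$ gives a strictly increasing string exactly when $\alpha_0\neq\beta$. Since the $C_p$ are free on the given bases, $h_p$ extends $R$-linearly. No finiteness or ordinal property beyond $\beta$ being the least admissible index is needed.

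It then remains to verify $d_{p+1}h_p+h_{p-1}d_p=\mathrm{id}_{C_p}$ for all $p\geq-1$, together with $d_0h_{-1}=\mathrm{id}$. The last identity is immediate, since $d_0[\beta]=1$. For $p\geq0$ we split into two cases on a basis simplex $\sigma=[\alpha_0,\ldots,\alpha_p]$.

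\emph{Case $\alpha_0>\beta$.} Here $d_{p+1}[\beta,\sigma]=\sigma-\sum_j(-1)^j[\beta,\alpha_0,\ldots,\widehat{\alpha_j},\ldots,\alpha_p]$. Every face of $\sigma$ still has minimum $>\beta$, so the sum equals $h_{p-1}d_p\sigma$, and the identity holds. For $p=0$ the face sum is replaced by $h_{-1}d_0[\alpha_0]=[\beta]$, which cancels the $-[\beta]$ term.

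\emph{Case $\alpha_0=\beta$.} Here $h_p\sigma=0$. In $d_p\sigma$, the $j=0$ term $[\alpha_1,\ldots,\alpha_p]$ has minimum $>\beta$, and $h_{p-1}$ sends it back to $\sigma$. All other faces still begin with $\beta$, so $h_{p-1}$ kills them. For $p=0$ we get $h_{-1}d_0[\beta]=[\beta]$.

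The only delicate point is this case bookkeeping, in particular that $h$ must vanish on simplices already containing $\beta$ in order to avoid degenerate strings. The signs work out because prepending $\beta$ shifts every face index by one. Hence $h$ is a contracting homotopy and the sequence is contractible.
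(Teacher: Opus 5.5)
Your proposal is correct and is essentially the paper's proof: the paper uses exactly the same contracting homotopy $h_{-1}(1)=[\beta]$, with $h_p$ prepending $\beta$ when $\alpha_0>\beta$ and vanishing when $\alpha_0=\beta$, and then asserts $d_0h_{-1}=\mathrm{id}$ and $d_{p+1}h_p+h_{p-1}d_p=\mathrm{id}$. Your two-case verification fills in the bookkeeping that the paper leaves to the reader, and it checks out, including the $p=0$ edge cases.
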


\begin{proof}
Define $R$-linear maps by $h_{-1}(1)=[\beta]$ and
\[
h_p[\alpha_0,\ldots,\alpha_p]
=\begin{cases}
[\beta,\alpha_0,\ldots,\alpha_p],&\beta<\alpha_0,\\
0,&\beta=\alpha_0.
\end{cases}
\]
Then $d_pd_{p+1}=0$, $d_0h_{-1}=\mathrm{id}_R$ and $d_{p+1}h_p+h_{p-1}d_p=\mathrm{id}$ for $p \geq 0$, proving the lemma.
\end{proof}

Then gives a resolution of $\underline R$, whose pointwise exactness is checked in Lemma~\ref{lem_tail_simplex_exactness}
\[
\cdots\longrightarrow P_4\xrightarrow{d_4}P_3\xrightarrow{d_3}P_2\xrightarrow{d_2}P_1\xrightarrow{d_1}P_0
 \xrightarrow{d_0}\underline R\longrightarrow0,\qquad P_p=\coprod_{\alpha_0<\cdots<\alpha_p<\omega_3}y_{\alpha_0}.
\]
Put $A=\operatorname{im}(d_4:P_4\to P_3)$ and let $c:P_4\to A$ be the corestriction of $d_4$.

\begin{lemma}\label{lem_ordinal_flat_dimension}
The module $\underline R$ is flat and
$\operatorname{pd}_{\mathcal C}\underline R=4$.
\end{lemma}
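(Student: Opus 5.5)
The plan is to prove flatness and the bound $\operatorname{pd}\le 4$ by writing $\underline R$ as a filtered colimit of representables, and to prove $\operatorname{pd}\ge 4$ by an Osofsky--Mitchell-type nonvanishing after reducing to a field.

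\emph{Flatness.} For $\alpha<\omega_3$ the representable right module $y_\alpha=\operatorname{Hom}_{\mathcal C}(-,\alpha)$ takes the value $R$ at $\beta\le\alpha$ and $0$ otherwise. The structure maps $y_\alpha\to y_{\alpha'}$ for $\alpha\le\alpha'$ are the identity on $R$ wherever both sides are nonzero. Hence $\varinjlim_{\alpha<\omega_3}y_\alpha$, evaluated at $\beta$, is $\varinjlim_{\alpha\ge\beta}R=R$, so $\underline R\cong\varinjlim_\alpha y_\alpha$ is a directed colimit of projectives. Since $-\otimes_{\mathcal C}N$ commutes with filtered colimits and filtered colimits of $R$-modules are exact, $\underline R$ is flat.

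\emph{Upper bound.} For an ordinal $\gamma\le\omega_3$, put $M_\gamma=\varinjlim_{\alpha<\gamma}y_\alpha$. This is the module equal to $R$ at $\beta<\gamma$ and $0$ elsewhere, so $M_{\omega_3}=\underline R$. I will show by induction on $\gamma$ that $\operatorname{pd}M_\gamma\le k+1$ whenever $\operatorname{cf}(\gamma)\le\aleph_k$, and that $M_{\delta+1}=y_\delta$ is projective.
\begin{itemize}
\item If $\operatorname{cf}(\gamma)=\aleph_0$, pick a cofinal sequence $\alpha_0<\alpha_1<\cdots$. The telescope sequence $0\to\coprod_n y_{\alpha_n}\to\coprod_n y_{\alpha_n}\to M_\gamma\to0$ gives $\operatorname{pd}\le1$.
\item In general, pick a continuous cofinal chain $(\gamma_\xi)_{\xi<\operatorname{cf}\gamma}$ of smaller ordinals. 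Then $M_\gamma$ is the continuous well-ordered union of the submodules $M_{\gamma_\xi}$. The quotients $M_{\gamma_{\xi+1}}/M_{\gamma_\xi}$ sit in short exact sequences whose outer terms have $\operatorname{pd}\le k$ by induction, since every ordinal below $\omega_{k}$-cofinality level has smaller cofinality.
\item Hence the quotients have $\operatorname{pd}\le k+1$, and Auslander's lemma on well-ordered continuous filtrations yields $\operatorname{pd}M_\gamma\le k+1$.
\end{itemize}
Applied to $\gamma=\omega_3$, whose cofinality is $\aleph_3$, this gives $\operatorname{pd}\underline R\le 4$. Equivalently, the syzygy $A=\operatorname{im}d_4$ of the resolution built from Lemma~\ref{lem_tail_simplex_exactness} is projective-independent of higher terms, by Schanuel.

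\emph{Lower bound.} It remains to show $\operatorname{Ext}^4_{\mathcal C}(\underline R,-)\ne0$, i.e.\ that $A$ is not projective. Choose a maximal ideal $\mathfrak m$ and put $k=R/\mathfrak m$. The resolution $P_\bullet$ consists of free modules and is pointwise split-exact by the contracting homotopy of Lemma~\ref{lem_tail_simplex_exactness}. Therefore $P_\bullet\otimes_R k$ is the analogous resolution of $\underline k$ over the $k$-linearisation of $\omega_3$, and its fourth syzygy is $A\otimes_R k$. If $A$ were projective, so would be $A\otimes_Rk$, so it suffices to treat a field $k$.

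Over $k$, I would identify $\operatorname{Ext}^n_{\mathcal C}(\underline k,N)$ for a presheaf $N$ with $\varprojlim^n_{\alpha<\omega_3}N(\alpha)$, using the resolution $P_\bullet$, whose cochains are exactly the standard cochains for derived limits over the poset. I would then invoke Mitchell's theorem (Osofsky's argument) that $\varprojlim^{n+1}$ over $\omega_n$ is nonzero for a suitable coefficient system, e.g.\ a large coproduct of representables, for $n=3$.

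This last step is the main obstacle. If citing is not acceptable, I would attempt Osofsky's inductive argument directly: show that a nonzero $\varprojlim^{k+1}$ class over $\omega_k$, with coefficients in a coproduct of representables, can be propagated to $\omega_{k+1}$. The propagation uses a counting argument over a cofinal set of size $\aleph_{k+1}$ to rule out a coboundary, starting from the elementary nonvanishing of $\varprojlim^1$ over $\omega$.
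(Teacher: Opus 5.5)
\textbf{Comparison with the paper.} Your flatness argument is the same as the paper's. For $\operatorname{pd}_{\mathcal C}\underline R=4$, the paper gets both bounds at once: it identifies $\operatorname{Hom}_{\mathcal C}(\underline R,-)$ with $\varprojlim_{\omega_3}$, so $\operatorname{Ext}^n_{\mathcal C}(\underline R,-)=\varprojlim^n$, and quotes Mitchell's theorem that a directed set of cofinality $\aleph_3$ has cohomological dimension exactly $4$. You instead prove $\operatorname{pd}\le 4$ by hand, via Auslander's lemma and induction on cofinality, and use Mitchell--Osofsky only for non-vanishing, after reducing to a residue field. This makes the upper bound self-contained and needs only the field-coefficient form of the non-vanishing. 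The hard direction is a citation in both versions, so you need not reprove Osofsky's induction. The route is viable, but two steps are wrong as written.

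\textbf{First error: the lower bound is mis-indexed.} The sequence $0\to A\to P_3\to P_2\to P_1\to P_0\to\underline R\to 0$ with $A=\operatorname{im}d_4$ is exact. So the bound $\operatorname{pd}\le 4$ you have just proved says precisely that $A$ \emph{is} projective. The condition $\operatorname{Ext}^4_{\mathcal C}(\underline R,-)\ne0$ is equivalent to the third syzygy $\operatorname{im}d_3$ being non-projective, i.e.\ to $A\hookrightarrow P_3$ not splitting; this is how Lemma~\ref{lem_ordinal_nonvanishing} uses the statement. Hence ``$A$ is not projective'' is false, and the implication ``$A$ projective $\Rightarrow$ $A\otimes_Rk$ projective'' yields no contradiction. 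Run the reduction with $\operatorname{im}d_3$ instead. Pointwise split exactness gives $(\operatorname{im}d_3)\otimes_Rk\cong\operatorname{im}(d_3\otimes_Rk)$, the third syzygy of $\underline k$. Its projectivity would force $\operatorname{pd}\underline k\le 3$, contradicting $\varprojlim^4\ne0$ over $\omega_3$ with $k$-coefficients. Your Schanuel sentence should likewise just say that $A$ is projective.

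\textbf{Second error: the upper-bound induction.} The claim ``every ordinal below has smaller cofinality'' is false in general. For example, $\omega_2<\omega_2\cdot 2<\omega_3$ both have cofinality $\aleph_2$, and for $\gamma=\omega_3$ you need $\operatorname{pd}M_\delta\le 3$ for all such $\delta$. Take an arbitrary continuous chain for $\gamma=\omega_2\cdot 2$ that passes through $\omega_2$. The quotient $M_{\omega_2+1}/M_{\omega_2}$ is then the simple module at $\omega_2$, of projective dimension $\operatorname{pd}M_{\omega_2}+1$. Your hypothesis only bounds this by $4$, and it equals $4$, so Auslander's lemma cannot give $\le 3$.

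The fix is to induct on $k$ and choose the chain with $\gamma_0=0$ and each $\gamma_{\xi+1}$ a successor ordinal, so that $M_{\gamma_{\xi+1}}$ is representable. At limit $\xi$ one has $\operatorname{cf}(\gamma_\xi)=\operatorname{cf}(\xi)<\aleph_k$, so $\operatorname{pd}M_{\gamma_\xi}\le k$ by induction. Every quotient $M_{\gamma_{\xi+1}}/M_{\gamma_\xi}$ then has projective dimension $\le k+1$, and Auslander's lemma gives $\operatorname{pd}M_\gamma\le k+1$.
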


\begin{proof}
    Since $\underline R = \varinjlim _{\beta < \omega_3}y_\beta$ is a filtered colimit of projective modules, it is flat.

    Since $\aleph_3$ is regular, $\omega_3$ has cofinality $\aleph_3$.
    By Mitchell's theorem \cite[Theorem~A]{Mitchell}, $\omega_3$ has $R$-cohomological dimension $4$, i.e., $\varprojlim ^p N$ vanishes for any inverse system $N : (\omega_3)^{\mathrm{op}} \to \mathrm{Mod}_R$ precisely when $p > 4$.
    Since limits are right adjoint to diagonal functor, $\operatorname{Hom}_{\mathcal C}(\underline R,-) \cong\varprojlim_{\alpha<\omega_3}(-)(\alpha)$.
    This yields exactly $\operatorname{pd}_{\mathcal C}\underline R=4$.
\end{proof}

\begin{lemma}\label{lem_ordinal_nonvanishing}
The cocycle $c: P_4\to A$ represents a nonzero class in $\operatorname{Ext}_{\mathcal C}^4(\underline R,A)$.
\end{lemma}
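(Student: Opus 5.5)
The plan is a standard dimension-shifting argument, using the projective resolution $P_\bullet\to\underline R$ built from Lemma~\ref{lem_tail_simplex_exactness} together with $\operatorname{pd}_{\mathcal C}\underline R=4$ from Lemma~\ref{lem_ordinal_flat_dimension}.

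First, the modules $P_p=\coprod y_{\alpha_0}$ are coproducts of representables, hence projective. By Lemma~\ref{lem_tail_simplex_exactness}, evaluating at each $\beta$ gives an exact complex, so $P_\bullet$ is a projective resolution of $\underline R$. The truncation
\[
0\to A\to P_3\to P_2\to P_1\to P_0\to\underline R\to0
\]
is then exact. Dimension shifting gives $\operatorname{Ext}^4_{\mathcal C}(\underline R,A)\cong\operatorname{Ext}^1_{\mathcal C}(\Omega^3,A)$, where $\Omega^3=\operatorname{im}(d_3)$. Under this identification the class of $c$ corresponds to the extension class of the short exact sequence
\[
0\to A\to P_3\to\Omega^3\to 0.
\]

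Second, suppose $[c]=0$. Then $c:P_4\to A$ is a coboundary in $\operatorname{Hom}_{\mathcal C}(P_\bullet,A)$, so $c=g\circ d_4$ for some $g:P_3\to A$. Since $d_4=\iota\circ c$ with $\iota:A\hookrightarrow P_3$, this gives $c=g\iota c$. Because $c$ is surjective, $g\iota=\mathrm{id}_A$. Hence $\iota$ splits, so $A$ is a direct summand of the projective $P_3$ and is therefore projective. The same splitting makes $\Omega^3$ a summand of $P_3$, so $\Omega^3$ is projective too.

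Third, once $\Omega^3$ is projective, the sequence
\[
0\to\Omega^3\to P_2\to P_1\to P_0\to\underline R\to 0
\]
is a projective resolution of length $3$. This gives $\operatorname{pd}_{\mathcal C}\underline R\le 3$, contradicting $\operatorname{pd}_{\mathcal C}\underline R=4$ from Lemma~\ref{lem_ordinal_flat_dimension}. Therefore $[c]\neq 0$.

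The main subtlety is the standard fact that a projective $(n{-}1)$-st syzygy in some projective resolution forces $\operatorname{pd}\le n-1$. This holds for modules over the ringoid $\mathcal C$, since the category of right $\mathcal C$-modules has enough projectives and the usual Schanuel/Ext arguments apply verbatim. I do not expect a genuine obstacle: the only real input is Mitchell's theorem, already packaged in Lemma~\ref{lem_ordinal_flat_dimension}.
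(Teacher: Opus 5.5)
Your proposal is correct and follows the paper's proof: if $[c]=0$, then $0\to A\to P_3\to\operatorname{im}d_3\to0$ splits, so $\operatorname{im}d_3$ is projective and $\operatorname{pd}_{\mathcal C}\underline R\le 3$, contradicting Lemma~\ref{lem_ordinal_flat_dimension}. Your derivation of the splitting, from $c=g\circ d_4$ together with the surjectivity of $c$, simply spells out the step the paper calls ``dimension shifting''.
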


\begin{proof}
If $[c]=0$, dimension shifting implies that $0\to A\to P_3\to\operatorname{im}d_3\to0$ splits.
Thus $\operatorname{im}d_3$ is projective, contradicting
$\operatorname{pd}_{\mathcal C}\underline R=4$.
\end{proof}

We construct two distinct $A_\infty$-structures over the graded category $\mathcal B$ as defined in the outline.
The first $A_\infty$ cateogry $\Lambda_-$ has underlying graded category $\mathcal B$, $m_2^{-}$ the usual composition and $m_{\neq 2}^{-}=0$.

\begin{definition}
    Let $\Lambda_+$ be an $A_\infty$ category with underlying graded category $\mathcal B$, $m_2^{+}$ the usual multiplication, $m_{i}=0$ for $i \notin \{2,4\}$, and $m_4$ is defined as follows:
    \begin{enumerate}
        \item For $\alpha_0<\alpha_1<\alpha_2<\alpha_3<\alpha_4 < \omega_3$, define 
        \[
m_4^{+}(u_{\alpha_3,\alpha_4},u_{\alpha_2,\alpha_3},u_{\alpha_1,\alpha_2},u_{\alpha_0,\alpha_1})
=c_{\alpha_0}[\alpha_0,\alpha_1,\alpha_2,\alpha_3,\alpha_4],
\]
where the result belongs to $A(\alpha_0) =E(\alpha_0, \alpha_4)=\mathcal{H}om_{\mathcal B}(\alpha_0,\alpha_4)^{-2}$.
\item Set $m_4^{+}(f_1,f_2,f_3,f_4)=0$ if one of the morphism is $1$, or has degree $-2$.
\item Extend $m_4$ to make it linear in $4$ arguments.
    \end{enumerate}
\end{definition}

\begin{lemma}\label{lem_square_zero_detector}
The categories $\Lambda_-$ and $\Lambda_+$ are strictly unital minimal $R$-linear $A_\infty$ categories.
\end{lemma}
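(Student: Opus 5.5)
The plan is to verify the Stasheff identities of Definition~\ref{def_ainfinity_category} directly, degree by degree, after sorting them by which operations can occur. For $\Lambda_-$ the only nonzero operation is $m_2$. The identities then reduce to associativity of the composition on $\mathcal B$, and minimality and strict unitality are immediate. So the substantive case is $\Lambda_+$. There the nonzero operations are $m_2$ and $m_4$, and the Stasheff identity of arity $n$ only involves composites $m_{r+1+t}(\mathrm{id}^{\otimes r}\otimes m_s\otimes\mathrm{id}^{\otimes t})$ with $r+s+t=n$ and $s,\ r+1+t\in\{2,4\}$. The identities are therefore nontrivial only for $n=3$ ($m_2\circ m_2$), $n=5$ ($m_2\circ m_4$ and $m_4\circ m_2$) and $n=7$ ($m_4\circ m_4$). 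Minimality holds by definition, since $m_1=0$.

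\textbf{Step 1: $\mathcal B$ is a graded $R$-category.} The composition $(g,y)\circ(f,x)=(g\circ f,\,g\cdot x+y\cdot f)$ is the trivial square-zero extension of $\mathcal C$ by $E$. Associativity amounts to $E$ being a $\mathcal C$-bimodule with $E\cdot E=0$. The right action is restriction along $A(u_{\alpha,\beta})$, which is functorial because $A\subseteq P_3$ is a presheaf. The left action is the identity on $A(\alpha)$ (or zero if the indices decrease), so it is trivially functorial and commutes with the right action. Units are $(1,0)$. This gives the $n=3$ identity for both structures.

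\textbf{Step 2: the identities $n=7$ and strict unitality.} By definition $m_4^+$ vanishes whenever an input has degree $-2$, and its output lies in $E$, which has degree $-2$. Hence every term $m_4(\ldots,m_4(\ldots),\ldots)$ vanishes, and the $n=7$ identity is automatic. Strict unitality for $m_4$ is built into clause (2) of the definition. For $m_2$ it holds by Step~1.

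\textbf{Step 3 (main step): the identity $n=5$, i.e.\ $m_4^+$ is a Hochschild $2$-shifted cocycle.} All inputs of nonzero $m_4$ have degree $0$, so the Koszul signs collapse. The $n=5$ identity becomes $\delta m_4^+=0$ in $C^4_R(\mathcal C,E)$, with the differential from the Hochschild subsection (up to a global sign). Both sides are $R$-multilinear, and $\operatorname{Hom}_{\mathcal C}(\alpha,\beta)$ is $R u_{\alpha,\beta}$ or $0$. It therefore suffices to evaluate on strings $u_{\alpha_4,\alpha_5},\dots,u_{\alpha_0,\alpha_1}$ with $\alpha_0\le\cdots\le\alpha_5$ (indexing as in the definition of $m_4^+$, reading the string from $\alpha_0$ up).

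\emph{Strictly increasing strings.} The two end terms and the inner composition terms are as follows.
\begin{itemize}
\item The left action of $u_{\alpha_4,\alpha_5}$ is the identity on $A(\alpha_0)$, giving $c_{\alpha_0}[\alpha_0,\dots,\alpha_4]$.
\item The right action of $u_{\alpha_0,\alpha_1}$ restricts $c_{\alpha_1}[\alpha_1,\dots,\alpha_5]$ along $\alpha_0\to\alpha_1$. Since $c$ is a module map out of $P_4=\coprod y_{\alpha_0}$, this is $c$ applied to the basis chain $[\alpha_1,\dots,\alpha_5]$ viewed in $y_{\alpha_1}(\alpha_0)$.
\item The composition terms $u_{\alpha_{j},\alpha_{j+1}}\circ u_{\alpha_{j-1},\alpha_j}=u_{\alpha_{j-1},\alpha_{j+1}}$ produce the faces omitting $\alpha_j$.
\end{itemize}
With the alternating signs of $\delta$, the total is $c_{\alpha_0}$ applied to $d_5[\alpha_0,\dots,\alpha_5]$ (suitably interpreted in $P_4(\alpha_0)$). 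This vanishes because $c\circ d_5=d_4\circ d_5=0$.

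\emph{Degenerate strings.} These are strings with some $\alpha_j=\alpha_{j+1}$, so that an input is an identity. There, all terms containing that identity vanish by normalisation. The remaining two terms, where the identity is composed into a neighbour or acts at an end, coincide and occur with opposite signs. This is the standard fact that normalised cochains form a subcomplex.

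I expect the main obstacle to be the bookkeeping in Step~3. One has to match the paper's sign conventions for $\delta$ and for the Stasheff identity, and correctly identify $m_4^+(\dots)\cdot u_{\alpha_0,\alpha_1}$ with the face term $c[\alpha_1,\dots,\alpha_5]$ restricted to $\alpha_0$. I would first fix the meaning of $c_{\alpha_0}[\alpha_0,\dots,\alpha_4]$ as $c$ applied to the generator $1_{\alpha_0}$ of the summand $y_{\alpha_0}$ indexed by that chain. I would then check the identity $\delta m_4^+=\pm\,c\circ d_5$ on this basis before tracking signs globally.
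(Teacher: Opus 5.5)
Your proposal is correct and follows essentially the same route as the paper. Only arities $3$, $5$ and $7$ carry nonzero terms. Arity $7$ vanishes because $m_4^+$ lands in $E$ and kills $E$-inputs. Arity $5$ on strictly increasing strings is $c_{\alpha_0}\bigl(d_5[\alpha_0,\ldots,\alpha_5]\bigr)=0$, and degenerate strings are handled by normalisation. The one case you leave implicit, as the paper does with ``easy to check'', is the arity-$5$ identity when some input has degree $-2$; there every term vanishes because $E\cdot E=0$ and $m_4^+$ vanishes on $E$-inputs.
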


\begin{proof}
For $\Lambda_+$, we check Stasheff identities for $\Lambda_+$, where the identity for arity $N$ is
\[
\sum_{r+s+t=N}(-1)^{r+st}
 m_{r+1+t}^{+}(\mathrm{id}^{\otimes r}\otimes m_s^{+}\otimes\mathrm{id}^{\otimes t})=0.
\]
It suffices to check the case $r+1+t, s \in \{2,4\}$, where $N \in \{3,5,7\}$.
\begin{enumerate}
    \item[($N = 3$)] Associativity yields $m_2^{+}\bigl(m_2^{+}(f_3,f_2),f_1\bigr) -m_2^{+}\bigl(f_3,m_2^{+}(f_2,f_1)\bigr)=0$.
    \item[($N=5$)] Let $\mathfrak L$ denote the left hand side of the identity, i.e., 
    \[
\begin{aligned}
\mathfrak L={}&m_2^{+}\bigl(m_4^{+}(f_5,f_4,f_3,f_2),f_1\bigr)
-m_2^{+}\bigl(f_5,m_4^{+}(f_4,f_3,f_2,f_1)\bigr)\\
&+m_4^{+}\bigl(m_2^{+}(f_5,f_4),f_3,f_2,f_1\bigr)-m_4^{+}\bigl(f_5,m_2^{+}(f_4,f_3),f_2,f_1\bigr)\\
&+m_4^{+}\bigl(f_5,f_4,m_2^{+}(f_3,f_2),f_1\bigr)-m_4^{+}\bigl(f_5,f_4,f_3,m_2^{+}(f_2,f_1)\bigr).
\end{aligned}
\]
We first show the case $f_j=u_{\alpha_{j-1}\alpha_j}$ with $\alpha_0 < \cdots < \alpha _5 < \omega_3$:
\[
\begin{aligned}
\mathfrak L&=A(u_{\alpha_0,\alpha_1})\left(c_{\alpha_1}[\alpha_1,\ldots,\alpha_5]\right)
-c_{\alpha_0}[\alpha_0,\ldots,\alpha_4] +c_{\alpha_0}[\alpha_0,\alpha_1,\alpha_2,\alpha_3,\alpha_5]\\
&\quad -c_{\alpha_0}[\alpha_0,\alpha_1,\alpha_2,\alpha_4,\alpha_5]
+c_{\alpha_0}[\alpha_0,\alpha_1,\alpha_3,\alpha_4,\alpha_5]
-c_{\alpha_0}[\alpha_0,\alpha_2,\alpha_3,\alpha_4,\alpha_5]\\
&=c_{\alpha_0}\!\left(
\sum_{j=0}^{5}(-1)^j
[\alpha_0,\ldots,\widehat{\alpha_j},\ldots,\alpha_5]
\right)=(c\circ d_5)_{\alpha_0}[\alpha_0,\ldots,\alpha_5]=0.
\end{aligned}
\]
If $f_i = 1$ or $f_i$ has degree $-2$ for some $f_i$, then is easy to check that $\mathfrak L = 0$.
    \item[($N=7$)] The identity becomes
    \[
\begin{aligned}
0={}&m_4^{+}\bigl(m_4^{+}(f_7,f_6,f_5,f_4),f_3,f_2,f_1\bigr)-m_4^{+}\bigl(f_7,m_4^{+}(f_6,f_5,f_4,f_3),f_2,f_1\bigr)\\
&+m_4^{+}\bigl(f_7,f_6,m_4^{+}(f_5,f_4,f_3,f_2),f_1\bigr)-m_4^{+}\bigl(f_7,f_6,f_5,m_4^{+}(f_4,f_3,f_2,f_1)\bigr).
\end{aligned}
\]
Since the output of $m_4^{+}$ has degree $-2$, the right hand side vanishes.
\end{enumerate}
Note that $m_1=0$ gives minimality.
To see the strict units, it is clear to verify
\[
m_1^{+}(1)=0,\qquad
m_2^{+}(1,f)=f
=m_2^{+}(f,1),\qquad
m_{\geq 3}^{+}(\ldots,1,\ldots)=0.
\]
For $\Lambda_-$ the verification is much easier.
\end{proof}

\subsection{Equivalence of the perfect categories}
\label{subsec_core_descent}

For a subset $\Gamma\subset\omega_3$, let $\mathcal C_{\Gamma}$ be the full subcategory of $\mathcal C$ on $\Gamma$, and let $E_{\Gamma}$ be the restriction of $E$ to $\mathcal C_{\Gamma}^{\mathrm{op}}\otimes_R\mathcal C_{\Gamma}$.

Based on the distinct $A_\infty$-structures $\Lambda_\pm$, we use twisted complexes show the equivalence $\left(H^0\operatorname{Tw}(\Lambda_+|_{\Gamma})\right)_{\Gamma}\cong\left(H^0\operatorname{Tw}(\Lambda_-|_{\Gamma})\right)_{\Gamma}$ for each finite subset $\Gamma \subseteq \omega_3$.
The we show these equivalences are compatible with inclusions of finite subsets, and ``glue'' these equivalences to $H^0\operatorname{Tw}(\Lambda_+) \simeq H^0\operatorname{Tw}(\Lambda_-)$.
Finally idempotent completion gives $\operatorname{Perf}(\Lambda_+)
\simeq\operatorname{Perf}(\Lambda_-)$.

\begin{lemma}\label{lem_finite_ordinal_acyclicity}
For every nonempty finite set $\Gamma\subset\omega_3$,
$\operatorname{HH}_R^p(\mathcal C_{\Gamma},E_{\Gamma})=0$ for $p>0$.
\end{lemma}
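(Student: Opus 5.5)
The plan is to identify the Hochschild cohomology with an Ext group over $\mathcal C_\Gamma$, and then use that on a \emph{finite} chain the constant presheaf is representable.

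First I will rewrite the coefficient bimodule. By definition $E_\Gamma(\alpha,\beta)=A(\alpha)$. The left action of $g=s\,u_{\beta,\gamma}$ is multiplication by $s$, and the right action is through $A$. Hence
\[
E_\Gamma(\alpha,\beta)\cong\operatorname{Hom}_R\bigl(\underline R(\beta),A(\alpha)\bigr),
\]
naturally in both variables, where $\underline R_\Gamma$ is the constant right module on $\Gamma$. Note that $\underline R(u)=\mathrm{id}_R$ for $u=u_{\beta,\gamma}$, so the covariant dependence on $\beta$ is exactly the trivial action described in the outline.

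Next I will invoke the standard comparison: for right $\mathcal C$-modules $N,M$ with $N(\beta)$ projective over $R$ and $\operatorname{Hom}_{\mathcal C}(\alpha,\beta)$ free over $R$,
\[
\operatorname{HH}^p_R\bigl(\mathcal C,\operatorname{Hom}_R(N(-),M(-))\bigr)\cong\operatorname{Ext}^p_{\mathcal C}(N,M).
\]
I will prove this directly rather than cite it. The (normalized) Hochschild cochains $C^p_R(\mathcal C_\Gamma,\operatorname{Hom}_R(N,M))$ are, by adjunction, $\operatorname{Hom}_{\mathcal C_\Gamma}(B_p,M)$, where
\[
B_p=\coprod_{\alpha_0,\dots,\alpha_p}y_{\alpha_0}\otimes\operatorname{Hom}(\alpha_1,\alpha_0)\otimes\cdots\otimes\operatorname{Hom}(\alpha_p,\alpha_{p-1})\otimes N(\alpha_p)
\]
is the two-sided bar resolution of $N$. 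The modules $B_p$ are projective because all tensor factors are free $R$-modules (here $N=\underline R$). The bar complex $B_\bullet\to N$ is pointwise contractible by the usual extra degeneracy. It remains to check that the Hochschild differential, with the sign conventions fixed in the Preliminaries, matches the dual of the bar differential. This is a routine sign bookkeeping, and the only point needing care is the first term $(-1)^{q|a_1|}a_1f(\dots)$, which is harmless since everything is in degree $0$.

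Finally, since $\Gamma$ is finite and nonempty, it has a maximum $\gamma$. Then $y_\gamma(\alpha)=\operatorname{Hom}_{\mathcal C_\Gamma}(\alpha,\gamma)=R\,u_{\alpha,\gamma}$ for every $\alpha\in\Gamma$, with restriction maps the identity, so $\underline R_\Gamma\cong y_\gamma$. This module is projective by Yoneda, hence $\operatorname{Ext}^p_{\mathcal C_\Gamma}(\underline R_\Gamma,A_\Gamma)=0$ for $p>0$, which proves the lemma.

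The main obstacle is only the sign and variance check in the comparison step. If that proves awkward, I will instead write an explicit contracting homotopy on the cochain complex, using the maximal object $\gamma$: insert $u_{\alpha_p,\gamma}$ as an extra last argument and use $\delta h+h\delta=\mathrm{id}$ in positive degrees. This mirrors the homotopy of Lemma~\ref{lem_tail_simplex_exactness} with the roles of minimum and maximum exchanged.
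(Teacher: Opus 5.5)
Your proof is correct. Your main line differs from the paper's, and your fallback is exactly what the paper does. The paper never mentions $\operatorname{Ext}$. It writes $C^p_R(\mathcal C_\Gamma,E_\Gamma)\cong\prod_{\alpha_0\le\cdots\le\alpha_p}A(\alpha_0)$, puts $\beta=\max\Gamma$, and checks term by term that $(hf)_{\alpha_0,\ldots,\alpha_{p-1}}=f_{\alpha_0,\ldots,\alpha_{p-1},\beta}$ satisfies $\delta h+h\delta=1$ in positive degrees. You instead identify $\operatorname{HH}^\bullet_R(\mathcal C_\Gamma,E_\Gamma)$ with $\operatorname{Ext}^\bullet_{\mathcal C_\Gamma}(\underline R_\Gamma,A_\Gamma)$ and observe that $\underline R_\Gamma\cong y_{\max\Gamma}$.

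These are the same idea at two levels. The paper's $h$ is precisely $\operatorname{Hom}_{\mathcal C_\Gamma}(\sigma,A_\Gamma)$, where $\sigma$ is the $\mathcal C_\Gamma$-linear contraction of the augmented bar resolution that prepends $u_{\alpha_0,\beta}$ at the $\underline R$-end. This $\sigma$ exists because $\underline R(u_{\alpha_0,\beta})=\mathrm{id}_R$, and it is the chain-level witness that $\underline R_\Gamma$ is projective. What your route buys is a conceptual reason the lemma holds on finite $\Gamma$ while $[c]\neq0$ on all of $\omega_3$, where $\operatorname{pd}_{\mathcal C}\underline R=4$. It also matches the paper's later identification $H^n_{\mathrm{cat}}(\omega_3,E|_{\omega_3})\cong\operatorname{Ext}^n_{\mathcal C}(\underline R,A)$. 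The price is the HH--Ext comparison, which the paper's homotopy avoids entirely. That homotopy also visibly preserves normalized cochains, which is the form used afterwards to choose $h_\Gamma$, $k_{\Gamma,\Gamma'}$ and $l$.

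One correction for when you write out the comparison. With right modules and $y_\alpha=\operatorname{Hom}_{\mathcal C}(-,\alpha)$, your $B_p$ is oriented backwards. In $y_{\alpha_0}\otimes\operatorname{Hom}(\alpha_1,\alpha_0)$ and $\operatorname{Hom}(\alpha_p,\alpha_{p-1})\otimes N(\alpha_p)$ the composites do not typecheck. The correct form is
\[
B_p=\coprod_{\alpha_0,\ldots,\alpha_p} N(\alpha_0)\otimes\operatorname{Hom}(\alpha_1,\alpha_0)\otimes\cdots\otimes\operatorname{Hom}(\alpha_p,\alpha_{p-1})\otimes y_{\alpha_p}.
\]
Yoneda and tensor--hom adjunction then give cochains with values in $\operatorname{Hom}_R(N(\alpha_0),M(\alpha_p))=E(\alpha_p,\alpha_0)$. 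Moreover, $\operatorname{Hom}_{\mathcal C_\Gamma}\bigl(\sum_i(-1)^i d_i,M\bigr)$ coincides term by term with the Hochschild differential of the Preliminaries, with no extra signs.
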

\begin{proof}
By definition, $\operatorname{HH}_R^p(\mathcal C_{\Gamma},E_{\Gamma}) =H^p\bigl(C_R^\bullet(\mathcal C_{\Gamma},E_{\Gamma}),\delta\bigr)$, and
\[
\begin{aligned}
C_R^p(\mathcal C_{\Gamma},E_{\Gamma})\cong\prod_{\alpha_0\le\cdots\le\alpha_p\atop\alpha_i\in \Gamma}
\operatorname{Hom}_R\!\left(R^{\otimes_R p},A(\alpha_0)\right)\cong\prod_{\alpha_0\le\cdots\le\alpha_p\atop\alpha_i\in \Gamma}
A(\alpha_0).
\end{aligned}
\]
Take $\beta = \sup \Gamma$. Then $\beta < \omega_3$.
For each $p \geq 1$, put 
\[
h:C_R^p(\mathcal C_{\Gamma},E_{\Gamma})\longrightarrow C_R^{p-1}(\mathcal C_{\Gamma},E_{\Gamma}),\qquad
(hf)_{\alpha_0,\ldots,\alpha_{p-1}}
=f_{\alpha_0,\ldots,\alpha_{p-1},\beta}.
\]
Then by construction one has $\delta h +h\delta =1$, since
\[
\begin{aligned}
(\delta hf)_{\alpha_0,\ldots,\alpha_p}
={}&f_{\alpha_0,\ldots,\alpha_{p-1},\beta}
+\sum_{j=1}^{p-1}(-1)^{p-j}
 f_{\alpha_0,\ldots,\widehat{\alpha_j},\ldots,\alpha_p,\beta}+(-1)^p A(u_{\alpha_0,\alpha_1})
 f_{\alpha_1,\ldots,\alpha_p,\beta},\\
(h\delta f)_{\alpha_0,\ldots,\alpha_p}
={}&f_{\alpha_0,\ldots,\alpha_p}
-f_{\alpha_0,\ldots,\alpha_{p-1},\beta}
+\sum_{j=1}^{p-1}(-1)^{p+1-j}
 f_{\alpha_0,\ldots,\widehat{\alpha_j},\ldots,\alpha_p,\beta}+(-1)^{p+1}A(u_{\alpha_0,\alpha_1})
 f_{\alpha_1,\ldots,\alpha_p,\beta}.
\end{aligned}
\]
By setwise homotopy \cite{BianZhangZhang2009}, $C_R^p(\mathcal C_{\Gamma},E_{\Gamma})$ is acyclic, thus $\operatorname{HH}_R^p(\mathcal C_{\Gamma}, E_{\Gamma})=0$.
\end{proof}

Hence $m_4^+|_{\Gamma} = \delta h_{\Gamma}$ for some $h_{\Gamma}\in C_R^3(\mathcal C_{\Gamma},E_{\Gamma})$.
Fix such $h_{\Gamma}$, and denote $\Lambda_\pm|_{\Gamma}$ as the full subcategories on $\Gamma$.
Then $h_{\Gamma}$ gives an $A_\infty$-functor $f_{\Gamma}: \Lambda_+|_{\Gamma}\to\Lambda_-|_{\Gamma}$ which satisfies
\begin{enumerate}
    \item $f_{\Gamma}$ is identical on objects, and $(f_{\Gamma})_1 = \mathrm{id}$, i.e., $f_{\Gamma}$ is identical on morphisms;
    \item $(f_{\Gamma})_3 = h_{\Gamma}$, and $(f_{\Gamma})_i=0$ for $i \notin\{1,3\}$.
\end{enumerate}
Here we extend $h_{\Gamma}$ to $\Lambda_+ |_{\Gamma}$ by sending the input in $E$ to $0$.

\begin{lemma}
    For any nonempty finite subset $\Gamma \subseteq \omega_3$, $f_{\Gamma} : \Lambda_+|_{\Gamma}\to\Lambda_-|_{\Gamma}$ is an $A_\infty$ functor. 
\end{lemma}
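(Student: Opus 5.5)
We verify the $A_\infty$-functor identities of Definition~\ref{def_ainfinity_functor} for $F=f_\Gamma$ arity by arity. The relevant data are $F_1=\mathrm{id}$, $F_3=h_\Gamma$ and $F_i=0$ otherwise. On the source we have $m^+_2$ and $m^+_4$; on the target we have only $m^-_2$. Hence in arity $n$ the left-hand side has two kinds of terms: those with $s=2$, namely $F_{n-1}(\ldots m_2\ldots)$, and those with $s=4$, namely $F_{n-3}(\ldots m_4^+\ldots)$. The right-hand side contains only terms $m_2^-(F_{i_1}\otimes F_{i_2})$ with $i_1+i_2=n$, together with the term $m_1^-(F_n)$, which vanishes by minimality. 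Since $F_i\neq 0$ only for $i\in\{1,3\}$, the only arities that can be nontrivial are $n\in\{2,4,6\}$ on the right and $n\in\{2,4,6\}$ on the left.

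\begin{enumerate}
\item[($n=2$)] The identity reads $F_1 m_2^+=m_2^-(F_1\otimes F_1)$, which holds because $m_2^+=m_2^-$ is the same composition.
\item[($n=4$)] The left side is $F_1 m_4^+ + \sum \pm F_3(\ldots m_2^+\ldots)$. The right side is $m_2^-(F_1\otimes F_3)\pm m_2^-(F_3\otimes F_1)$. Rearranging, the identity says exactly $m_4^+=\delta h_\Gamma$ on inputs from $\mathcal C_\Gamma$, where $\delta$ is the Hochschild differential of $\mathcal C_\Gamma$ with coefficients in $E_\Gamma$; this is how $h_\Gamma$ was chosen, using Lemma~\ref{lem_finite_ordinal_acyclicity}. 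This is the main step. The sign conventions of Definition~\ref{def_ainfinity_functor}, namely $(-1)^{r+st}$ and $(-1)^{\sum(k-j)(i_j-1)}$, must be matched against those of the Hochschild differential. All inputs have degree $0$ here, so no Koszul signs arise, and any overall sign discrepancy can be absorbed by replacing $h_\Gamma$ with $-h_\Gamma$; I would state the choice of sign explicitly. For inputs involving an $E$-component of degree $-2$ or an identity, both sides vanish. On the left, $m_4^+$ vanishes on such inputs by definition. The terms $F_3(\ldots)$ vanish because $h_\Gamma$ is extended by zero on $E$, and identity inputs either produce an identity-containing argument of $F_3$ (normalised cochains) or cancel in pairs. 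Each right-hand term is a product $m_2^-(x,F_3(\ldots))$, where the $F_3$-factor receives the $E$-input and is therefore zero. The one case needing care is when the $E$-input sits in the $F_1$-factor, giving a product of two elements of $E$. In $\mathcal B$ the composite of two degree $-2$ elements lies in degree $-4$, which is zero, and in any case the $F_3$-factor lands in $E$.
\item[($n=6$)] The left side contains $F_3(\ldots m_4^+\ldots)$, which vanishes because $m_4^+$ outputs a degree $-2$ element of $E$ and $h_\Gamma$ kills $E$-inputs. It also contains $F_5(\ldots)$, which is zero. The right side contains $m_2^-(F_3\otimes F_3)$, a product of two $E$-valued elements, which is zero in $\mathcal B$. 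It also contains $m_k$ for $k\ge3$, which vanish in $\Lambda_-$.
\end{enumerate}

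Finally, strict unitality: $F_1(1)=1$ because $F_1=\mathrm{id}$, and $F_3$ vanishes on identities once $h_\Gamma$ is taken normalised. Normalised cochains compute the same Hochschild cohomology, and $m_4^+$ is itself normalised, so we may choose $h_\Gamma$ normalised. The degree requirement is also met: $F_3$ must have degree $-2$, and it does, since it sends three degree-$0$ inputs to $E$ in degree $-2$. Thus $f_\Gamma$ is an $A_\infty$-functor. Within this argument the only nontrivial content is the arity-$4$ identity, which reduces to the coboundary relation $m_4^+|_\Gamma=\delta h_\Gamma$.
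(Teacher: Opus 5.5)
Your proposal is correct and follows the paper's proof. You check arities $2,4,6$. Arity $4$ reduces to $m_4^+|_\Gamma=\delta h_\Gamma$, and arity $6$ vanishes because $h_\Gamma$ kills $E$ and $E\cdot E=0$. Your extra treatment of $E$-inputs and of normalisation and strict unitality fills in points the paper leaves implicit.

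Two small corrections. First, the sign hedge is unnecessary. All inputs have degree $0$ and $|F_3|=-2$ is even, so the left-hand signs are $(-1)^r$ and both right-hand terms carry sign $+1$. The arity-$4$ identity is therefore literally $m_4^+=\delta h_\Gamma$ for the Hochschild differential of the Preliminaries, with no replacement of $h_\Gamma$ by $-h_\Gamma$. Second, identity inputs do not make both sides vanish: for example, $a_1=1$ gives $h_\Gamma(a_2,a_3,a_4)$ on each side. They are degree-zero elements of $\mathcal C_\Gamma$, so that same relation already covers them.
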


\begin{proof}
    Write $f_{\Gamma} = f$, $h = h_{\Gamma}$ and $m_i^\pm |_{\Gamma} = m_i^\pm$ for convenience in this proof.
    Since $m_i^{-} = 0$ for $i \neq 2$, and $(f_{\Gamma})_i = 0$ for $i \notin \{1,3\}$, it suffices to check the defining equalities with $N$ arguments, where $N \in \{2,4,6\}$.
    \begin{enumerate}
        \item[$(N=2)$] The identity is $f_1 m^+_2 = m^-_2 (f_1 \otimes f_1)$, which follows from $m_2^+ = m_2^-$. 
        \item[$(N=4)$] The identity is $f_1m_4^++f_3(m_2^+\otimes1\otimes1)-f_3(1\otimes m^+_2\otimes1) +f_3(1\otimes1\otimes m^+_2)=m_2^-(f_1\otimes f_3)+m_2^-(f_3\otimes f_1)$.
        Use $f_1=\mathrm{id}$, $f_3=h_{\Gamma}$, $m_4^+=\delta h_{\Gamma}$ and $m_2^- = m_2^+$.
        Then
        \[
\begin{aligned}
&f_1m_4^+
+f_3(m_2^+\otimes1\otimes1)
-f_3(1\otimes m_2^+\otimes1)
+f_3(1\otimes1\otimes m_2^+)\\
={}&\delta h_{\Gamma}
+h_{\Gamma}(m_2\otimes1\otimes1)
-h_{\Gamma}(1\otimes m_2\otimes1)
+h_{\Gamma}(1\otimes1\otimes m_2)\\
={}&m_2(1\otimes h_{\Gamma})
-h_{\Gamma}(m_2\otimes1\otimes1)
+h_{\Gamma}(1\otimes m_2\otimes1)
-h_{\Gamma}(1\otimes1\otimes m_2)\\
&\qquad+m_2(h_{\Gamma}\otimes1)+h_{\Gamma}(m_2\otimes1\otimes1)
-h_{\Gamma}(1\otimes m_2\otimes1)
+h_{\Gamma}(1\otimes1\otimes m_2)\\
={}&m_2(1\otimes h_{\Gamma})+m_2(h_{\Gamma}\otimes1) \qquad =m_2^-(f_1\otimes f_3)+m_2^-(f_3\otimes f_1).
\end{aligned}
\]
\item[$(N=6)$] The identity is $f_3(m_4^+\otimes1\otimes1)
-f_3(1\otimes m_4^+\otimes1)
+f_3(1\otimes1\otimes m_4^+)=m_2^-(f_3\otimes f_3)$.
Since both $m_4^+$ and $f_3$ take values in $E$ and vanishes for the inputs from $E$, the left hand side vanishes; by $E^2=0$ the right hand side also vanishes. \qedhere
    \end{enumerate}
\end{proof}

\begin{lemma}\label{lem_finite_twisted_equivalence}
For every nonempty finite $\Gamma\subset\omega_3$, the $A_\infty$-functor
$f_{\Gamma}:\Lambda_+|_{\Gamma}\to\Lambda_-|_{\Gamma}$ constructed above induces a triangle equivalence
\[
F_{\Gamma}:=H^0\operatorname{Tw}(f_{\Gamma}):
H^0\operatorname{Tw}(\Lambda_+|_{\Gamma})
\xrightarrow{\sim}
H^0\operatorname{Tw}(\Lambda_-|_{\Gamma}),
\]
which fixes the one-term objects and their graded morphism groups, i.e.,
\[
F_{\Gamma}(\alpha,0)=(\alpha,0), \quad 
\operatorname{Hom}_{H^0\operatorname{Tw}(\Lambda_+|_{\Gamma})}
((\alpha,0),(\beta,0)[d])
\xlongrightarrow{\sim}
\operatorname{Hom}_{H^0\operatorname{Tw}(\Lambda_-|_{\Gamma})}
((\alpha,0),(\beta,0)[d]).
\]
\end{lemma}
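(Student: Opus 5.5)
The plan is to apply Proposition~\ref{prop_twisted_functor_extension} to $f_\Gamma$, so that $F_\Gamma=H^0\operatorname{Tw}(f_\Gamma)$ is a triangle functor, and then to show it is an equivalence. On objects, $\operatorname{Tw}(f_\Gamma)$ sends $(\alpha,0)$ to $(\alpha,0)$. The reason is that the extension sends a twisted complex $(E,q)$ to $(E,\sum_{n\ge1}f_n(q^{\otimes n}))$ (suitably suspended), and for $q=0$ all these terms vanish. On morphisms between one-term complexes with $q=0$ there is no twist to insert, so $\operatorname{Tw}(f_\Gamma)_1$ restricted to $\mathcal{H}om_{\Sigma\Lambda_+}(\alpha,\beta[d])$ is just $(f_\Gamma)_1=\mathrm{id}$, up to the shift signs of $\Sigma$. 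Both $A_\infty$-categories are minimal (Lemma~\ref{lem_square_zero_detector}), so by Lemma~\ref{lem_minimal_cohomology} these cohomology groups are the graded pieces $\mathcal{H}om_{\mathcal B}(\alpha,\beta)^d$ themselves. Therefore $F_\Gamma$ identifies $\operatorname{Hom}((\alpha,0),(\beta,0)[d])$ on the two sides, which gives the displayed isomorphisms.

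The key observation for full faithfulness is that $f_\Gamma$ is an $A_\infty$-quasi-equivalence: it is the identity on objects and $(f_\Gamma)_1=\mathrm{id}$ is a quasi-isomorphism. I will show $F_\Gamma$ is fully faithful on all of $H^0\operatorname{Tw}$ by a dévissage. Every twisted complex $(E,q)$ with $E=\bigoplus_{i=1}^N X_i[a_i]$ is an iterated cone of its one-term pieces. Concretely, the upper-triangularity of $q$ gives a distinguished triangle relating $(X_1[a_1],0)$, $(E,q)$, and the twisted complex on the remaining summands. The functor $\operatorname{Tw}(f_\Gamma)$ preserves these triangles, since it is compatible with cones.

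With this, a standard five-lemma induction applies: on the number of summands first in the source variable, then in the target variable, using the long exact Hom sequences in the triangulated categories $H^0\operatorname{Tw}(\Lambda_\pm|_\Gamma)$. This upgrades the bijection on shifted one-term objects to a bijection on all Hom groups. For essential surjectivity, the essential image is a triangulated subcategory containing every $(\alpha,0)[d]$. Since every object of $H^0\operatorname{Tw}(\Lambda_-|_\Gamma)$ is an iterated cone of these, the image is everything.

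The main obstacle is the triangulated structure over an arbitrary ring $R$. The paper only asserts triangulatedness of $H^0\operatorname{Tw}$ for fields, and otherwise via an embedding into strictly unital modules. To handle this I would pass through the Yoneda embedding into dg modules, where the cone triangles are honest. I would then check that the one-step filtration of $(E,q)$ really gives the cone of the component $q_{1,*}$ in $H^0$. The other point needing care is that $\operatorname{Tw}(f_\Gamma)$ commutes with shifts and cones up to natural isomorphism; this is the content of Proposition~\ref{prop_twisted_functor_extension}, which I will invoke rather than re-verify. A shortcut alternative is to note that $f_\Gamma$ has an inverse: the functor with $g_1=\mathrm{id}$ and $g_3=-h_\Gamma$ is an $A_\infty$-functor $\Lambda_-|_\Gamma\to\Lambda_+|_\Gamma$ by the same computation. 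Its composite with $f_\Gamma$ has first component the identity and third component $0$, because $E^2=0$ kills the cross terms. That composite is then the identity functor, so $F_\Gamma$ is an isomorphism of categories with inverse $H^0\operatorname{Tw}(g)$. I would try this shortcut first, since it avoids the dévissage entirely.
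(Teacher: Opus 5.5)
Your main line is the paper's proof. It uses Proposition~\ref{prop_twisted_functor_extension} for the triangle functor, $(f_\Gamma)_1=\mathrm{id}$ plus minimality for the one-term Hom groups, and shifts, cones and the five lemma for full faithfulness. You also supply the essential-surjectivity argument that the paper only asserts: the essential image of a fully faithful triangle functor is closed under cones and contains every $(\alpha,0)[d]$.

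The shortcut you would try first is a genuinely different and correct route. For $g$ with $g_1=\mathrm{id}$ and $g_3=-h_\Gamma$, the arity-four identity is again exactly $m_4^+=\delta h_\Gamma$. The higher identities use $E^2=0$ and the vanishing of $m_4^+$ on degree $-2$ inputs.

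In the composites, however, no $m_n$ appears, so $E^2=0$ is not what does the work. The arity-three components cancel as $\pm(h_\Gamma-h_\Gamma)$. The components of arity $5,7,9$, such as $g_3(f_3\otimes f_1\otimes f_1)$, vanish because $f_3$ and $g_3$ take values in $E$ while $h_\Gamma$ was extended by zero on $E$-inputs.

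To finish, you need both $g\circ f_\Gamma=\mathrm{id}$ and $f_\Gamma\circ g=\mathrm{id}$; the computation is symmetric. You also need compatibility of $\operatorname{Tw}$ with composition. This holds because $\operatorname{Tw}(f)$ acts on objects by $q\mapsto\sum_n f_n(q^{\otimes n})$ and on morphisms by inserting Maurer--Cartan elements into the bar-coalgebra map, and both operations compose.

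This buys more than the paper's argument. $F_\Gamma$ becomes an isomorphism of categories with explicit inverse $H^0\operatorname{Tw}(g)$. The equivalence also no longer depends on long exact Hom sequences in $H^0\operatorname{Tw}$ over an arbitrary $R$, which was exactly your worry; Proposition~\ref{prop_twisted_functor_extension} is then needed only for the word ``triangle''. The paper's dévissage, in turn, is the more robust argument. It applies to any $A_\infty$-functor inducing isomorphisms on one-term Hom groups, without relying on the square-zero shape that makes $f_\Gamma$ strictly invertible.
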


\begin{proof}
Proposition~\ref{prop_twisted_functor_extension} gives the triangle functor $F_{\Gamma}=H^0\operatorname{Tw}(f_{\Gamma})$.
Since $f_{\Gamma}$ is the identity on objects, $F_{\Gamma}(\alpha,0)=(f_{\Gamma}(\alpha),0)=(\alpha,0)$.
For $\alpha,\beta\in \Gamma$ and $d\in\mathbb Z$,
\[
\operatorname{Hom}_{H^0\operatorname{Tw}(\Lambda_\pm|_{\Gamma})}
\bigl((\alpha,0),(\beta,0)[d]\bigr)
\cong H^d\!\left(\mathcal{H}om_{\Lambda_\pm}(\alpha,\beta),m_1^{\pm}\right)=\mathcal{H}om_{\mathcal B}(\alpha,\beta)^d.
\]
The first isomorphism follows from the definition of twisted complexes, and the equality follows from Lemma~\ref{lem_minimal_cohomology}.
Under these identifications, for the objects $(\alpha,0)$, $(\beta,0)$ and $a\in\mathcal{H}om_{\mathcal B}(\alpha,\beta)^d$, the formula defining $\operatorname{Tw}(f_{\Gamma})$ gives $\bigl(\operatorname{Tw}(f_{\Gamma})\bigr)_1(a)=(f_{\Gamma})_1(a)=a$.
Thus $F_{\Gamma}$ induces the identity after identifying both sides with $\mathcal{H}om_{\mathcal B}(\alpha,\beta)^d$
\[
\operatorname{Hom}_{H^0\operatorname{Tw}(\Lambda_+|_{\Gamma})}
((\alpha,0),(\beta,0)[d])
\longrightarrow
\operatorname{Hom}_{H^0\operatorname{Tw}(\Lambda_-|_{\Gamma})}
((\alpha,0),(\beta,0)[d]),\qquad
[a]\longmapsto[a].
\]

For arbitrary $T,U\in H^0\operatorname{Tw}(\Lambda_+|_{\Gamma})$, consider the natural comparison maps
\[
\operatorname{Hom}_{H^0\operatorname{Tw}(\Lambda_+|_{\Gamma})}(T,U[d])
\longrightarrow
\operatorname{Hom}_{H^0\operatorname{Tw}(\Lambda_-|_{\Gamma})}(F_{\Gamma}(T),F_{\Gamma}(U)[d]).
\]
The isomorphism holds for $T=(\alpha,0)$ and $U = (\beta,0)$, and thus extends to finite twisted complexes by taking (de)suspensions and mapping cones (using five lemma).

Thus $F_{\Gamma}$ is fully faithful. It is essentially surjective, thus a triangle equivalence.
\end{proof}

For every nonempty finite $\Gamma\subset\omega_3$, fix the normalized cochains $h_{\Gamma}$ used to define $f_{\Gamma}$.

\begin{lemma}\label{lem_twisted_overlap_coherence}
For any inclusion of finite subsets $\Gamma \subseteq \Gamma' (\subseteq \omega_3)$, let $\iota^\pm_{\Gamma,\Gamma'}$ denote $H^0\operatorname{Tw}(\Lambda_\pm |_{\Gamma}) \to H^0\operatorname{Tw}(\Lambda_\pm |_{\Gamma'})$.
Then there is a natural isomorphism $\vartheta_{\Gamma,\Gamma'} : \iota^-_{\Gamma,\Gamma'} F_{\Gamma} \Rightarrow F_{\Gamma'} \iota_{\Gamma,\Gamma'}^+$ 
\[
\xymatrix@C=4em@R=2em{
H^0\operatorname{Tw}(\Lambda_+|_{\Gamma})
\ar[r]^{F_{\Gamma}}\ar[d]_{\iota^+_{\Gamma,\Gamma'}}
& H^0\operatorname{Tw}(\Lambda_-|_{\Gamma})
\ar[d]^{\iota^-_{\Gamma,\Gamma'}}
\ar@{}[dl]^(.3){}="theta-start"^(.7){}="theta-end"
\ar@{=>}"theta-start";"theta-end"|{\vartheta_{\Gamma,\Gamma'}}\\
H^0\operatorname{Tw}(\Lambda_+|_{\Gamma'})
\ar[r]_{F_{\Gamma'}}
& H^0\operatorname{Tw}(\Lambda_-|_{\Gamma'})
}
\]
such that $\vartheta_{\Gamma,\Gamma}=1$ and $\vartheta_{\Gamma,\Gamma''}=(\vartheta_{\Gamma',\Gamma''} \iota_{\Gamma,\Gamma'}^+)\circ(\iota_{\Gamma',\Gamma''}^-\vartheta_{\Gamma,\Gamma'})$ for finite subsets $\Gamma\subseteq \Gamma'\subseteq \Gamma'' (\subseteq \omega_3)$.
These isomorphisms commute with the suspensions.
\end{lemma}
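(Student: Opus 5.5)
We will build $\vartheta_{\Gamma,\Gamma'}$ from an $A_\infty$-homotopy between the two $A_\infty$-functors $\Lambda_+|_{\Gamma}\to\Lambda_-|_{\Gamma'}$. These are $j^-\circ f_{\Gamma}$ and $f_{\Gamma'}\circ j^+$, where $j^\pm$ are the full inclusions. Both functors are the identity on objects and on $(\cdot)_1$, and have only $(\cdot)_3$ nonzero. Their third components are $h_{\Gamma}$ and $h_{\Gamma'}|_{\Gamma}$ respectively.

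\emph{Step 1: compare the cochains.} Both $h_{\Gamma}$ and $h_{\Gamma'}|_{\Gamma}$ are normalized cochains in $C^3_R(\mathcal C_{\Gamma},E_{\Gamma})$ with $\delta h=m_4^+|_{\Gamma}$. Hence their difference is a $3$-cocycle. By Lemma~\ref{lem_finite_ordinal_acyclicity} there is a normalized $k_{\Gamma,\Gamma'}\in C^2_R(\mathcal C_{\Gamma},E_{\Gamma})$ with
\[
h_{\Gamma'}|_{\Gamma}-h_{\Gamma}=\delta k_{\Gamma,\Gamma'}.
\]
Normalization is preserved by the contracting homotopy $h$ from that lemma: it only appends $\beta=\sup\Gamma$. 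To handle the case where $\beta$ is already the last entry, we will use the normalized subcomplex and its standard quasi-isomorphism with the full complex.

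\emph{Step 2: build the homotopy.} The cochain $k=k_{\Gamma,\Gamma'}$ of degree $-2$ gives an $A_\infty$-natural transformation (prenatural transformation of degree $0$ in the $A_\infty$-functor category) $T$. Its components are $T_0=1$ and $T_2=k$, with all others zero. We check that $T$ is closed exactly as in the $(N=4)$ computation above. Because $E^2=0$ and $m_{\neq 2}^-=0$, the only nontrivial component of the closedness equation is
\[
\delta k=h_{\Gamma'}|_{\Gamma}-h_{\Gamma}.
\]
The higher terms vanish because they would contain two $E$-valued factors. Since $T_0$ is the identity, $T$ is invertible in $H^0$ of the functor category.

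Applying $\operatorname{Tw}$ (Proposition~\ref{prop_twisted_functor_extension} extended to natural transformations, \cite{BLMPretriangulated}) gives a natural isomorphism
\[
\vartheta_{\Gamma,\Gamma'}:\iota^-_{\Gamma,\Gamma'}F_{\Gamma}\Rightarrow F_{\Gamma'}\iota^+_{\Gamma,\Gamma'}.
\]
This isomorphism is compatible with shifts, since $\operatorname{Tw}$ of a transformation acts entrywise on $X[a]$.

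\emph{Step 3: cocycle condition.} This is the main obstacle, because arbitrary choices of $k$ need not compose coherently. We plan to avoid the problem by making the $h_\Gamma$ themselves coherent. Fix a total well-order and define all $h_\Gamma$ as restrictions of a single global choice wherever possible. Failing that, we choose the $k_{\Gamma,\Gamma'}$ inductively by $|\Gamma'|$.

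The cleanest route is to observe that on $H^0$ the composite of the transformations for $k_{\Gamma',\Gamma''}|_\Gamma$ and $k_{\Gamma,\Gamma'}$ is the transformation with second component $k_{\Gamma',\Gamma''}|_\Gamma+k_{\Gamma,\Gamma'}$. The composite is additive because products of two $E$-terms vanish. Its difference from $k_{\Gamma,\Gamma''}$ is a $2$-cocycle, so by Lemma~\ref{lem_finite_ordinal_acyclicity} it is a coboundary $\delta\ell$. A transformation whose second component is $\delta\ell$ is homotopic to the identity via the degree-$(-1)$ homotopy with first component $\ell$. Therefore the induced maps in $H^0$ agree, which gives the cocycle identity.

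Finally, for $\Gamma=\Gamma'$ we take $k=0$, which gives $\vartheta_{\Gamma,\Gamma}=1$.
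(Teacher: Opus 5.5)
Your proposal is correct and is essentially the paper's argument. It uses the same normalized $k_{\Gamma,\Gamma'}$ with $\delta k_{\Gamma,\Gamma'}=h_{\Gamma'}|_{\Gamma}-h_{\Gamma}$, the same choice $k_{\Gamma,\Gamma}=0$, and the same cocycle argument via $\ell$ with $\delta\ell=k_{\Gamma,\Gamma''}-k_{\Gamma,\Gamma'}-k_{\Gamma',\Gamma''}|_{\Gamma}$, where $E^2=0$ kills all higher terms. The only difference is packaging: you build the $A_\infty$-transformation $(T_0,T_2)=(1,k)$ and the homotopy with first component $\ell$, then transport them through the functoriality of $\operatorname{Tw}$ from \cite{BLMPretriangulated}, as the paper's subsequent remark suggests; the paper instead writes out $\operatorname{Tw}(T)_{(V,q)}=1_V-s^{-1}k^s_{\Gamma,\Gamma'}(q,q)$ and the homotopy $-s^{-1}l^s(q)$ explicitly and checks closedness, naturality and coherence by hand.
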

\begin{proof}
Since $\delta h_{\Gamma}=m_4^+|_{\Gamma}$, the difference $h_{\Gamma'}|_{\Gamma}-h_{\Gamma}$ is a normalized Hochschild cocycle.
Lemma~\ref{lem_finite_ordinal_acyclicity} gives a normalized $k_{\Gamma,\Gamma'}\in C_R^2(\mathcal C_{\Gamma},E_{\Gamma})$ with $\delta k_{\Gamma,\Gamma'}=h_{\Gamma'}|_{\Gamma}-h_{\Gamma}$.
Choose $k_{\Gamma,\Gamma}=0$.
Extend $k_{\Gamma,\Gamma'}$ by zero on $E$ inputs, and to matrices:
\[
\bigl(k_{\Gamma,\Gamma'}(B,A)\bigr)_{\gamma\alpha}
=\sum_\beta k_{\Gamma,\Gamma'}(B_{\gamma\beta},A_{\beta\alpha}), \qquad (A:X \to Y, \quad B : Y \to Z \text{ are matrices}).
\]
For canonical map $s:V\to V[1]$, write $k_{\Gamma,\Gamma'}^s s^{\otimes2}=sk_{\Gamma,\Gamma'}$.
For $X=(V,q)$, define
\[
t_{\Gamma,\Gamma',X}=1_V-s^{-1}k_{\Gamma,\Gamma'}^s(q,q),\qquad
(\vartheta_{\Gamma,\Gamma'})_X=[t_{\Gamma,\Gamma',X}]
\in\operatorname{Hom}_{H^0\operatorname{Tw}(\Lambda_-|_{\Gamma'})}(F_{\Gamma}X,F_{\Gamma'}X).
\]
Here $|k_{\Gamma,\Gamma'}^s| = -1$, thus $|t_{\Gamma,\Gamma',X}| = 0$.
This defines $\vartheta$.

We then show $t_{\Gamma,\Gamma',X}$ is closed, so $(\vartheta_{\Gamma,\Gamma'})_X = [t_{\Gamma,\Gamma',X}]$ is a morphism in $H^0\operatorname{Tw}(\Lambda_-|_{\Gamma'})$.
Put $q_{\Gamma}=q+h_{\Gamma}^s(q,q,q)$ and $q_{\Gamma'}=q+(h_{\Gamma'}|_{\Gamma})^s(q,q,q)$.
We show
\[
\big(1_V - s^{-1}k_{\Gamma,\Gamma'}^s(q,q)\big) \in Z^0 \mathcal{H}om_{\operatorname{Tw}(\Lambda_-|_{\Gamma'})}
\bigl((V,q_{\Gamma}),(V,q_{\Gamma'})\bigr).
\]
Let $d$ denote the differential on $\mathcal{H}om_{\operatorname{Tw}(\Lambda_-|_{\Gamma'})} ((V,q_{\Gamma}),(V,q_{\Gamma'}))$.
Write $b_2 =s\circ m_2\circ(s\otimes s)^{-1}$.
Using $(\delta k_{\Gamma,\Gamma'})^s s^{\otimes3}=s(\delta k_{\Gamma,\Gamma'})$, and evaluating at $(q,q,q)$ gives
\[
(\delta k_{\Gamma,\Gamma'})^s(q,q,q)
={}-b_2\bigl(q,k_{\Gamma,\Gamma'}^s(q,q)\bigr)
    -b_2\bigl(k_{\Gamma,\Gamma'}^s(q,q),q\bigr)-k_{\Gamma,\Gamma'}^s\bigl(b_2(q,q),q\bigr)
    -k_{\Gamma,\Gamma'}^s\bigl(q,b_2(q,q)\bigr).
\]
The Maurer--Cartan equation for $q$ is
$b_2(q,q)+b_4^+(q,q,q,q)=0$.
Using $E^2=0$, the vanishing of $k_{\Gamma,\Gamma'}$ on $E$ inputs,
and $\delta k_{\Gamma,\Gamma'}=h_{\Gamma'}|_{\Gamma}-h_{\Gamma}$, we obtain
\begin{align*}
s\,d(t_{\Gamma,\Gamma',X})
&=b_2(q_{\Gamma'},st_{\Gamma,\Gamma',X})+b_2(st_{\Gamma,\Gamma',X},q_{\Gamma})\\
&=q_{\Gamma}-q_{\Gamma'}
  -b_2\bigl(q_{\Gamma'},k_{\Gamma,\Gamma'}^s(q,q)\bigr)
  -b_2\bigl(k_{\Gamma,\Gamma'}^s(q,q),q_{\Gamma}\bigr)\\
&=q_{\Gamma}-q_{\Gamma'}
  -b_2\bigl(q,k_{\Gamma,\Gamma'}^s(q,q)\bigr)
  -b_2\bigl(k_{\Gamma,\Gamma'}^s(q,q),q\bigr)\\
&=q_{\Gamma}-q_{\Gamma'}+(\delta k_{\Gamma,\Gamma'})^s(q,q,q)
  +k_{\Gamma,\Gamma'}^s\bigl(b_2(q,q),q\bigr)
  +k_{\Gamma,\Gamma'}^s\bigl(q,b_2(q,q)\bigr)\\
&=q_{\Gamma}-q_{\Gamma'}+(\delta k_{\Gamma,\Gamma'})^s(q,q,q)
  -k_{\Gamma,\Gamma'}^s\bigl(b_4^+(q,q,q,q),q\bigr)
  -k_{\Gamma,\Gamma'}^s\bigl(q,b_4^+(q,q,q,q)\bigr)\\
&=q_{\Gamma}-q_{\Gamma'}+(\delta k_{\Gamma,\Gamma'})^s(q,q,q)=\bigl(h_{\Gamma}-h_{\Gamma'}|_{\Gamma}+\delta k_{\Gamma,\Gamma'}\bigr)^s(q,q,q) \qquad=0.
\end{align*}
Since $s$ is invertible, $d(t_{\Gamma,\Gamma',X})=0$, as required.

For naturality, take a closed degree-zero matrix $A:X=(V,q_X)\to Y=(W,q_Y)$.
Let $d$ denote the differential on
$\mathcal{H}om_{\operatorname{Tw}(\Lambda_-|_{\Gamma'})}(F_{\Gamma}X,F_{\Gamma'}Y)$.
Then 
\begin{align*}
&t_{\Gamma,\Gamma',Y}\circ\operatorname{Tw}(f_{\Gamma})_1(A)
 -\operatorname{Tw}(f_{\Gamma'})_1(A)\circ t_{\Gamma,\Gamma',X}\\
\overset{(1)}={}&\bigl(1_W-s^{-1}k_{\Gamma,\Gamma'}^s(q_Y,q_Y)\bigr)
 \circ s^{-1}\!\left(
 sA+\sum_{r+t=2}h_{\Gamma}^s(q_Y^{\otimes r},sA,q_X^{\otimes t})
 \right)\\
&\quad-s^{-1}\!\left(
 sA+\sum_{r+t=2}(h_{\Gamma'}|_{\Gamma})^s(q_Y^{\otimes r},sA,q_X^{\otimes t})
 \right)\circ\bigl(1_V-s^{-1}k_{\Gamma,\Gamma'}^s(q_X,q_X)\bigr)\\
\overset{(2)}={}&s^{-1}\!\left(
 \sum_{r+t=2}(h_{\Gamma}-h_{\Gamma'}|_{\Gamma})^s(q_Y^{\otimes r},sA,q_X^{\otimes t})
 -b_2\bigl(k_{\Gamma,\Gamma'}^s(q_Y,q_Y),sA\bigr)
 +b_2\bigl(sA,k_{\Gamma,\Gamma'}^s(q_X,q_X)\bigr)
 \right)\\
\overset{(3)}={}&s^{-1}\!\left(
 -\sum_{r+t=2}(\delta k_{\Gamma,\Gamma'})^s(q_Y^{\otimes r},sA,q_X^{\otimes t})
 -b_2\bigl(k_{\Gamma,\Gamma'}^s(q_Y,q_Y),sA\bigr)
 +b_2\bigl(sA,k_{\Gamma,\Gamma'}^s(q_X,q_X)\bigr)
 \right)\\
\overset{(4)}={}&s^{-1}\Bigl(
b_2\bigl(q_Y,k_{\Gamma,\Gamma'}^s(q_Y,sA)+k_{\Gamma,\Gamma'}^s(sA,q_X)\bigr) +b_2\bigl(k_{\Gamma,\Gamma'}^s(q_Y,sA)+k_{\Gamma,\Gamma'}^s(sA,q_X),q_X\bigr)
 \Bigr)\\
\overset{(5)}={}&d\left(s^{-1}\bigl(k_{\Gamma,\Gamma'}^s(q_Y,sA)+k_{\Gamma,\Gamma'}^s(sA,q_X)\bigr)\right).
\end{align*}
Here $(1)$ is from the definition, $(2)$ uses $E^2=0$, $(3)$ uses $h_{\Gamma}-h_{\Gamma'}|_{\Gamma}=-\delta k_{\Gamma,\Gamma'}$, $(4)$ uses
\[
\begin{aligned}
(\delta k_{\Gamma,\Gamma'})^s(u,v,w)
={}&-b_2\bigl(k_{\Gamma,\Gamma'}^s(u,v),w\bigr)
-(-1)^{|u|}b_2\bigl(u,k_{\Gamma,\Gamma'}^s(v,w)\bigr)\\
&-k_{\Gamma,\Gamma'}^s\bigl(b_2(u,v),w\bigr)
-(-1)^{|u|}k_{\Gamma,\Gamma'}^s\bigl(u,b_2(v,w)\bigr),
\end{aligned}
\]
$(5)$ follows from the twisted differential formula and $E^2=0$.
Thus, in $H^0\operatorname{Tw}(\Lambda_-|_{\Gamma'})$,
\[
(\vartheta_{\Gamma,\Gamma'})_Y\circ F_{\Gamma}([A])
=F_{\Gamma'}([A])\circ(\vartheta_{\Gamma,\Gamma'})_X.
\]

$\vartheta_{\Gamma,\Gamma'}$ is a natural isomorphism, where $1_V+s^{-1}k_{\Gamma,\Gamma'}^s(q,q)$ represents the inverse.

Finally, $\vartheta_{\Gamma,\Gamma}=1$ follows from the construction.
It remains to show for finite subsets $\Gamma \subseteq \Gamma' \subseteq \Gamma'' (\subseteq \omega_3)$, one has $\vartheta_{\Gamma,\Gamma''}=(\vartheta_{\Gamma',\Gamma''} \iota_{\Gamma,\Gamma'}^+)\circ(\iota_{\Gamma',\Gamma''}^-\vartheta_{\Gamma,\Gamma'})$.
Note that
\[
\delta\bigl(k_{\Gamma,\Gamma''}-k_{\Gamma,\Gamma'}-k_{\Gamma',\Gamma''}|_{\Gamma}\bigr)
=(h_{\Gamma''}|_{\Gamma}-h_{\Gamma})-(h_{\Gamma'}|_{\Gamma}-h_{\Gamma})-(h_{\Gamma''}|_{\Gamma}-h_{\Gamma'}|_{\Gamma})=0.
\]
By Lemma~\ref{lem_finite_ordinal_acyclicity}, take $l\in C_R^1(\mathcal C_{\Gamma},E_{\Gamma})$ with $\delta l=k_{\Gamma,\Gamma''}-k_{\Gamma,\Gamma'}-k_{\Gamma',\Gamma''}|_{\Gamma}$. 
Fix $X=(V,q)$.
Again we put
$q_\star=q+h_\star^s(q,q,q)$.
Let $d$ denote the differential on
$\mathcal{H}om_{\operatorname{Tw}(\Lambda_-|_{\Gamma''})}
((V,q_{\Gamma}),(V,q_{\Gamma''}))$.
Write $l^s s=sl$.
Suppressing the natural inclusions, we have
\begin{align*}
&t_{\Gamma,\Gamma'',X}-t_{\Gamma',\Gamma'',X}\circ t_{\Gamma,\Gamma',X}\\
\overset{[1]}={}&1_V-s^{-1}k_{\Gamma,\Gamma''}^s(q,q)
 -\bigl(1_V-s^{-1}(k_{\Gamma',\Gamma''}|_{\Gamma})^s(q,q)\bigr)
  \circ\bigl(1_V-s^{-1}k_{\Gamma,\Gamma'}^s(q,q)\bigr)\\
\overset{[2]}={}&-s^{-1}\bigl(k_{\Gamma,\Gamma''}-k_{\Gamma,\Gamma'}-k_{\Gamma',\Gamma''}|_{\Gamma}\bigr)^s(q,q)
\quad \overset{[3]}={}-s^{-1}(\delta l)^s(q,q)\\
\overset{[4]}={}&-s^{-1}\Bigl(
 b_2\bigl(l^s(q),q\bigr)
 +b_2\bigl(q,l^s(q)\bigr)
 -l^s\bigl(b_2(q,q)\bigr)\Bigr)
\overset{[5]}={}-s^{-1}\Bigl(
 b_2\bigl(l^s(q),q\bigr)
 +b_2\bigl(q,l^s(q)\bigr)\Bigr)\\
\overset{[6]}={}&s^{-1}\Bigl(
 b_2\bigl(q_{\Gamma''},-l^s(q)\bigr)
 +b_2\bigl(-l^s(q),q_{\Gamma}\bigr)\Bigr)
\quad \overset{[7]}={}d\bigl(-s^{-1}l^s(q)\bigr).
\end{align*}
Here $[2]$ and $[6]$ use $E^2=0$, $[5]$ uses $l^s(b_2(q,q))=-l^s(b_4^+(q,q,q,q))=0$, and the remaining follows from definitions or constructions.
Taking $H^0$ gives the result.
\end{proof}

\begin{remark}
    Some verifications in Lemma~\ref{lem_twisted_overlap_coherence} are not necessary by \cite[Corollary 11.22]{BLMPretriangulated}.
\end{remark}

\begin{proposition}\label{prop_marked_core_descent}
There is a triangle equivalence $\operatorname{Perf}(\Lambda_+)\simeq\operatorname{Perf}(\Lambda_-)$ fixing the one-term objects and their graded morphism groups, i.e.,
\[
F_{\Gamma}(\alpha,0)=(\alpha,0), \quad 
\operatorname{Hom}_{H^0\operatorname{Tw}(\Lambda_+|_{\Gamma})}
((\alpha,0),(\beta,0)[d])
\xlongrightarrow{\sim}
\operatorname{Hom}_{H^0\operatorname{Tw}(\Lambda_-|_{\Gamma})}
((\alpha,0),(\beta,0)[d]).
\]
\end{proposition}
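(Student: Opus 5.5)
The plan is to glue the finite-stage equivalences $F_\Gamma$ of Lemma~\ref{lem_finite_twisted_equivalence} along the directed poset of nonempty finite subsets $\Gamma\subseteq\omega_3$, using the coherent isomorphisms $\vartheta$ of Lemma~\ref{lem_twisted_overlap_coherence}, and then pass to idempotent completions.

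\textbf{Step 1: exhibit both sides as directed unions.} Every finite twisted complex $(V,q)$ over $\Lambda_\pm$ involves only finitely many objects, so it lies in $\operatorname{Tw}(\Lambda_\pm|_{\Gamma})$ for some finite $\Gamma$. Since $\Lambda_\pm|_\Gamma\subseteq\Lambda_\pm$ is a full sub-$A_\infty$-category, the inclusions $\operatorname{Tw}(\Lambda_\pm|_\Gamma)\to\operatorname{Tw}(\Lambda_\pm)$ are fully faithful on morphism complexes, and hence on $H^0$. Consequently
\[
H^0\operatorname{Tw}(\Lambda_\pm)=\bigcup_{\Gamma}H^0\operatorname{Tw}(\Lambda_\pm|_\Gamma),
\]
a directed union of full triangulated subcategories whose transition functors are the $\iota^\pm_{\Gamma,\Gamma'}$.

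\textbf{Step 2: define the global functor $F$.} The data $(F_\Gamma,\vartheta_{\Gamma,\Gamma'})$ form a pseudo-natural transformation between these two directed diagrams. This uses $\vartheta_{\Gamma,\Gamma}=1$, the cocycle identity, and compatibility with suspension. For each object $X$, let $\Gamma_X$ be the set of objects of $\omega_3$ occurring in $X$, and set $F(X):=F_{\Gamma_X}(X)$. For a morphism $a:X\to Y$, put $\Gamma=\Gamma_X\cup\Gamma_Y$ and define
\[
F(a):=(\vartheta_{\Gamma_Y,\Gamma})_Y^{-1}\circ F_\Gamma(a)\circ(\vartheta_{\Gamma_X,\Gamma})_X .
\]
Enlarging $\Gamma$ does not change $F(a)$: naturality of $\vartheta$ together with the cocycle identity shows this. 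The same identity gives functoriality, since one can compute a composite inside a common $\Gamma$. The two displayed properties then follow from Lemma~\ref{lem_finite_twisted_equivalence}. For $\alpha,\beta$ we may take $\Gamma=\{\alpha,\beta\}$, and there $F$ is literally the identity on $(\alpha,0)$ and on graded Hom groups. Here $\vartheta_{\{\alpha\},\Gamma}$ evaluated at one-term objects is $1-s^{-1}k(0,0)=1$, because $q=0$.

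\textbf{Step 3: triangle equivalence and idempotent completion.} $F$ is exact: a distinguished triangle lies in some $H^0\operatorname{Tw}(\Lambda_+|_\Gamma)$, $F_\Gamma$ is exact, and conjugating by the suspension-compatible isomorphisms $\vartheta$ preserves distinguished triangles. Full faithfulness is checked inside a common finite $\Gamma$, where $F$ is isomorphic to $F_\Gamma$. Essential surjectivity holds because any object of the target lies in some $H^0\operatorname{Tw}(\Lambda_-|_\Gamma)$, and $F_\Gamma$ is essentially surjective onto it. Finally, a triangle equivalence induces a triangle equivalence of idempotent completions, giving $\operatorname{Perf}(\Lambda_+)\simeq\operatorname{Perf}(\Lambda_-)$. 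The properties on one-term objects persist because they are unchanged by completion.

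\textbf{Main obstacle.} The delicate point is the well-definedness and functoriality bookkeeping in Step~2. A cleaner route, which I would try first, is to replace each $H^0\operatorname{Tw}(\Lambda_+|_\Gamma)$ by its essential image and present $F$ as the functor induced on the $2$-colimit of a directed diagram of fully faithful inclusions. The cocycle condition of Lemma~\ref{lem_twisted_overlap_coherence} is exactly the coherence needed for this, and for directed diagrams of full embeddings the $2$-colimit is the union. This avoids choosing $\Gamma_X$ by hand.
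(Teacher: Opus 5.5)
Your proposal is correct and is essentially the paper's proof. Like the paper, you set $F(X)=F_{\Gamma_X}(X)$, define $F(a)$ by conjugating $F_{\Gamma'}(a)$ with the coherent isomorphisms $\vartheta$ of Lemma~\ref{lem_twisted_overlap_coherence}, check full faithfulness and essential surjectivity at a common finite stage via Lemma~\ref{lem_finite_twisted_equivalence}, and pass to idempotent completions; the only detail you omit is the paper's convention $\Gamma_X=\{0\}$ for the empty twisted complex, needed because the finite-stage lemmas assume $\Gamma$ nonempty, and your argument that $F$ is exact (conjugating by suspension-compatible $\vartheta$) is more explicit than the paper's one-line remark.
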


\begin{proof}
We first construct a triangle equivalence $F:H^0\operatorname{Tw}(\Lambda_+)
\longrightarrow H^0\operatorname{Tw}(\Lambda_-)$.

For each twisted complex $X$, let $\Gamma_X$ be the finite set of
ordinals occurring in its underlying formal sum, taking
$\Gamma_X=\{0\}$ for the empty sum, and put $F(X)=F_{\Gamma_X}(X)$.
For a morphism $a:X\to Y$, choose a finite set
$\Gamma'\supseteq \Gamma_X\cup \Gamma_Y$ and define $F(a)=(\vartheta_{\Gamma_Y,\Gamma'})_Y^{-1}
\circ F_{\Gamma'}(a)\circ(\vartheta_{\Gamma_X,\Gamma'})_X$.
If $\Gamma'\subseteq \Gamma''$, then Lemma~\ref{lem_twisted_overlap_coherence} gives (suppressing natural inclusions)
\begin{align*}
(\vartheta_{\Gamma_Y,\Gamma''})_Y^{-1}\circ F_{\Gamma''}(a)\circ(\vartheta_{\Gamma_X,\Gamma''})_X=(\vartheta_{\Gamma_Y,\Gamma'})_Y^{-1}\circ(\vartheta_{\Gamma',\Gamma''})_Y^{-1}\circ F_{\Gamma''}(a)\circ(\vartheta_{\Gamma',\Gamma''})_X\circ(\vartheta_{\Gamma_X,\Gamma'})_X=(\vartheta_{\Gamma_Y,\Gamma'})_Y^{-1}\circ F_{\Gamma'}(a)
 \circ(\vartheta_{\Gamma_X,\Gamma'})_X.
\end{align*}
Hence, $F(a)$ is well-defined.
Similarly $F$ preserves composition, identities and $R$-linearity.

For fixed $X,Y$, take $\Gamma_X$, $\Gamma_Y$ and $\Gamma'$ as above.
Then $F$ acts on hom sets as the composite
\[
\operatorname{Hom}_{H^0\operatorname{Tw}(\Lambda_+)}(X,Y)
\xrightarrow[\sim]{\ F_{\Gamma'}\ }
\operatorname{Hom}_{H^0\operatorname{Tw}(\Lambda_-)}(F_{\Gamma'}X,F_{\Gamma'}Y)\xrightarrow{\sim}
\operatorname{Hom}_{H^0\operatorname{Tw}(\Lambda_-)}(FX,FY).
\]
Hence $F$ is fully faithful by
Lemma~\ref{lem_finite_twisted_equivalence}.
For any $Y\in H^0\operatorname{Tw}(\Lambda_-)$, choose finite
$\Gamma'\supseteq \Gamma_Y\cup\{0\}$.
The same lemma gives $X\in H^0\operatorname{Tw}(\Lambda_+|_{\Gamma'})$
with $F_{\Gamma'}(X)\cong Y$.
Since $(\vartheta_{\Gamma_X,\Gamma'})_X:F(X)\xrightarrow{\sim}F_{\Gamma'}(X)$,
the functor $F$ is essentially surjective.

Since the axioms of a triangulated category are checked in finite stages, $F$ is an equivalence. 
Idempotent completion passes the isomorphism to perfect categories. 
\end{proof}

\subsection{Detecting differences in \texorpdfstring{$m_4$}{m4} via Toda brackets}\label{sec_detector_construction}
\label{subsec_large_branches}

The category $\operatorname{Tw}(\Lambda_\pm)$ uses finite formal coproducts. In this subsection, we allow formal coproducts of $\leq \aleph_3$ representable right modules, obtaining an inclusion $\operatorname{Tw}(\Lambda_\pm) \hookrightarrow \operatorname{Tw}\bigl(\Lambda_\pm^{(\leq\aleph_3)}\bigr)$.
The resulting larger thick subcategories of the derived categories detect the difference between the two $m_4$ operations.

Let $\Lambda_b^{(\leq\aleph_3)}$ have objects families $(\alpha_i)_{i\in L}$ of ordinals below $\omega_3$, with $|L|\leq|\omega_3|$.
The family represents the free right $\mathcal C$-module $\coprod_{i\in L}y_{\alpha_i}$.
For any such two families, set
\[
\mathcal{H}om_{\Lambda_b^{(\leq\aleph_3)}}((\alpha_i)_{i\in L},(\beta_j)_{j\in K})^r
 =\prod_{i\in L}\coprod_{j\in K}\mathcal{H}om_{\Lambda_b}(\alpha_i,\beta_j)^r.
\]
Thus a morphism is a column-finite matrix.
Hence one can extens $m_i$ by summing over composable matrix entries. 
From a fixed input column, only finitely many paths of a fixed length occur, and these operations remain column-finite.
The Stasheff identities are follow entrywise from Lemma~\ref{lem_square_zero_detector}.
For the corresponding free modules $P,Q$, one has
\[
\mathcal{H}om_{\Lambda_b^{(\leq\aleph_3)}}(P,Q)^0=\operatorname{Hom}_{\mathcal C}(P,Q), \qquad \mathcal{H}om_{\Lambda_b^{(\leq\aleph_3)}}(P,Q)^{-2}=\operatorname{Hom}_{\mathcal C}(P,Q\otimes_{\mathcal C}E).
\]

For $b \in \{+, -\}$, this gives an $A _\infty$-functor $\mathbb Y_b : \Lambda_b^{(\leq \aleph_3)} \longrightarrow \mathrm{Mod}_{\infty} (\Lambda_b)$ via
\[
(\alpha_i)_{i \in L} \longmapsto\mathcal{H}om_{\Lambda_b^{(\leq\aleph_3)}}(\iota_b(-),(\alpha_i)_{i \in L})= \coprod_{i \in L}\mathcal{H}om_{\Lambda_b^{(\leq\aleph_3)}}(\iota_b(-),\alpha_i).
\]

\begin{lemma}\label{lem_large_matrix_realization}
$\mathbb Y_b$ induces a fully faithful triangle functor $H^0\operatorname{Tw}(\Lambda_b^{(\leq\aleph_3)})\longrightarrow D_\infty(\Lambda_b)$.
\end{lemma}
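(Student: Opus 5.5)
The plan is a standard Yoneda argument for the dg (or $A_\infty$) category of modules, carried out in three steps: the $A_\infty$-functor, full faithfulness on objects of $\Lambda_b^{(\leq\aleph_3)}$, and the extension to twisted complexes.

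\textbf{Step 1: the $A_\infty$-functor.} First I will check that $\mathbb Y_b$ is a genuine strictly unital $A_\infty$-functor into the dg category $\mathrm{Mod}_\infty(\Lambda_b)$ of strictly unital right $A_\infty$-modules. Its components are the usual Yoneda components:
\[
(\mathbb Y_b)_n(a_1,\ldots,a_n)=\bigl(m_{n+k}(a_1,\ldots,a_n,-,\ldots,-)\bigr)_{k\geq0}.
\]
Since the Yoneda embedding is defined entrywise and morphisms are column-finite matrices, the functor identities reduce to the Stasheff identities of Lemma~\ref{lem_square_zero_detector}. The identities for $\Lambda_b^{(\leq\aleph_3)}$ are already noted entrywise before the statement.

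\textbf{Step 2: full faithfulness on the objects $(\alpha_i)_{i\in L}$.} Here I will show that
\[
H^*\mathcal{H}om_{\Lambda_b^{(\leq\aleph_3)}}\bigl((\alpha_i),(\beta_j)\bigr)\longrightarrow \operatorname{Hom}_{D_\infty(\Lambda_b)}\bigl(\mathbb Y_b(\alpha_i),\mathbb Y_b(\beta_j)[*]\bigr)
\]
is an isomorphism. The source is $\prod_{i}\coprod_j\mathcal{H}om_{\mathcal B}(\alpha_i,\beta_j)$, by minimality (Lemma~\ref{lem_minimal_cohomology}). For the target I use two facts.
\begin{enumerate}
\item A coproduct of representables is a coproduct in $D_\infty(\Lambda_b)$ of K-projective (cofibrant) modules. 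Hence the Hom out of it is $\prod_i \operatorname{Hom}_{D_\infty}(\mathbb Y\alpha_i,-)$.
\item The homotopy Yoneda lemma over an arbitrary commutative ring $R$ gives $\operatorname{Hom}_{D_\infty}(\mathbb Y\alpha,M[*])\cong H^*M(\alpha)$. I will cite Keller/Lefèvre-Hasegawa for this. It holds because representables are strictly unital and semi-free, and no flatness over $R$ is needed for a representable source.
\end{enumerate}
Finally, evaluating $\coprod_j \mathbb Y\beta_j$ at $\alpha$ gives $\coprod_j\mathcal{H}om_{\mathcal B}(\alpha,\beta_j)$, because coproducts of modules are computed pointwise. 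Combining these yields exactly the source, and the map is induced by $(\mathbb Y_b)_1$.

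\textbf{Step 3: extension to twisted complexes.} Next I will extend to $\operatorname{Tw}(\Lambda_b^{(\leq\aleph_3)})$ using the universal property of twisted complexes over the pretriangulated dg category $\mathrm{Mod}_\infty(\Lambda_b)$. Proposition~\ref{prop_twisted_functor_extension}, together with totalisation, sends a twisted complex to an iterated cone of shifts of the modules $\mathbb Y_b(\alpha_i)$. This gives a triangle functor $H^0\operatorname{Tw}(\Lambda_b^{(\leq\aleph_3)})\to H^0\operatorname{Tw}(\mathrm{Mod}_\infty)\to D_\infty(\Lambda_b)$. The second arrow is a triangle functor because shifts and cones in the module dg category are the triangulated structure of the derived category on cofibrant objects. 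Full faithfulness then follows from Step 2 by induction on the length of the twisted complex, exactly as in Lemma~\ref{lem_finite_twisted_equivalence}: filter by the strictly triangular $q$, use long exact Hom sequences of cones, and apply the five lemma in both variables.

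\textbf{Main obstacle.} I expect the main difficulty to be Step 2 over an arbitrary ring $R$ with infinite coproducts. Two points need care. First, the infinite coproduct must be preserved both by $H^*$ and by the passage to $D_\infty$; it is, since homology commutes with coproducts and coproducts of cofibrant modules are cofibrant. Second, the $m_4^+$ operations must not disturb the Yoneda comparison, which is why I work throughout with strictly unital $A_\infty$-modules rather than rectifying. If the $A_\infty$-module Yoneda lemma over a general ring is delicate, my fallback is to pass through the dg Yoneda (bar) rectification $U\Lambda_b$, which is quasi-equivalent to $\Lambda_b$, and apply the dg Yoneda lemma there.
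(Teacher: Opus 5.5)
Your proposal is correct and follows essentially the paper's argument. You extend the Yoneda functor to twisted complexes, identify both sides with $\prod_i\coprod_j\mathcal{H}om_{\mathcal B}(\alpha_i,\beta_j)^d$ using the $A_\infty$-Yoneda lemma, minimality, and the fact that Hom out of a coproduct of representables is a product (you rightly make explicit, via K-projectivity, that these $D_\infty$-Homs agree with $H^0$-Homs, which the paper leaves implicit), and then pass to all finite twisted complexes by shifts, cones and the five lemma. The one fix is the reference: Keller and Lef\`evre-Hasegawa work over a field, so for arbitrary $R$ you should cite the Yoneda homotopy equivalence of Lyubashenko--Manzyuk (Appendix~A, Theorem~A.1), as the paper does, or write out the unit-based contracting homotopy yourself.
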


\begin{proof}
Let $\iota_b:\Lambda_b\hookrightarrow\Lambda_b^{(\leq\aleph_3)}$ be the full inclusion of singleton families.
As in \cite[\S7.5, Theorem]{KellerAinf}, extend the Yoneda functor to finite twisted complexes and restrict along $\iota_b$:
\[
\operatorname{Tw}(\Lambda_b^{(\leq\aleph_3)})
\longrightarrow\mathrm{Mod}_\infty(\Lambda_b^{(\leq\aleph_3)})
\xrightarrow{\iota_b^*}\mathrm{Mod}_\infty(\Lambda_b).
\]
Note that \cite[\S7.5]{KellerAinf} works over fields; over the commutative ring $R$, the extension is given by the same finite twisting formulas; see \cite[\S11.20 and Proposition~13.19]{BLMPretriangulated}.
Taking $H^0$ and then localising at objectwise quasi-isomorphisms gives a triangle functor
\[
\widetilde{\mathbb Y}_b:
H^0\operatorname{Tw}(\Lambda_b^{(\leq\aleph_3)})
\longrightarrow D_\infty(\Lambda_b)
\]
whose restriction to matrix objects is induced by $\mathbb Y_b$.

By \cite[Appendix~A, Theorem~A.1]{LyubashenkoManzyuk} one has a homotopy equivalence
\[
\mathcal{H}om_{\mathrm{Mod}_\infty(\Lambda_b)}
\bigl(\mathcal{H}om_{\Lambda_b}(-,\alpha),N\bigr)
\simeq N(\alpha).
\]
For $P=(\alpha_i)_{i\in L}$ and a module $N$, it gives $\mathcal{H}om_{\mathrm{Mod}_\infty(\Lambda_b)}(\mathbb Y_b(P),N) \simeq\prod_{i\in L}N(\alpha_i)$.
Hence
\[
\operatorname{Hom}_{D_\infty(\Lambda_b)}
(\mathbb Y_b(P),\mathbb Y_b(Q)[d])
\cong\prod_{i\in L}H^d(\mathbb Y_b(Q)(\alpha_i))=\prod_{i\in L}\coprod_{j\in K}
\mathcal{H}om_{\Lambda_b}(\alpha_i,\beta_j)^d
=\mathcal{H}om_{\Lambda_b^{(\leq\aleph_3)}}(P,Q)^d.
\]
The inverse isomorphism is induced by $(\mathbb Y_b)_1$ and minimality:
\[
\begin{aligned}
&\mathcal{H}om_{\Lambda_b^{(\leq\aleph_3)}}(P,Q)^d
\xlongequal{m_1=0}
H^d\!\left(\mathcal{H}om_{\Lambda_b^{(\leq\aleph_3)}}(P,Q)\right)\\
&\quad \xrightarrow{\ H^d((\mathbb Y_b)_1)\ }
H^d\!\left(\mathcal{H}om_{\mathrm{Mod}_\infty(\Lambda_b)}
(\mathbb Y_b(P),\mathbb Y_b(Q))\right)\xlongrightarrow{\sim}
\operatorname{Hom}_{D_\infty(\Lambda_b)}
(\mathbb Y_b(P),\mathbb Y_b(Q)[d]).
\end{aligned}
\]
Thus $\widetilde{\mathbb Y}_b$ is fully faithful on matrix objects and their shifts, i.e., $P[r]$'s.
Taking shifts and cones (by five lemma) proves full faithfulness on all finite twisted complexes.
\end{proof}

Let $\mathcal V_b$ denote the image of $\widetilde{\mathbb Y}_b$.
By Lemma~\ref{lem_large_matrix_realization}, one has fully faithful inclusion of thick subcategories $\operatorname{Perf}(\Lambda_b) \subseteq \mathcal V_b \subseteq D_\infty (\Lambda_b)$, and all three categories are algebraic over $R$.

\medskip

Note that each term ($P_d$'s) in the projective resolution of $\underline R$ (Lemma~\ref{lem_tail_simplex_exactness}) are in $\mathcal V_b$, by Lemma~\ref{lem_large_matrix_realization}.
The graded Hom groups are computed as
\[
\operatorname{Hom}_{\mathcal V_b}(P_p,P_q[d])
\cong
\begin{cases}
\operatorname{Hom}_{\mathcal C}(P_p,P_q),&d=0,\\
\operatorname{Hom}_{\mathcal C}(P_p,P_q\otimes_{\mathcal C}E),&d=-2,\\
0,&\text{else}.
\end{cases}
\]
Hence the full subcategory on $P_p$'s is $2$-split.

\begin{remark}
    The computation for $d=-2$ is given by $y_\beta \otimes _{\mathcal C} E \cong E(-,\beta)$ and coproducts.
\end{remark}

Put $\mathcal I=(d_1\otimes_{\mathcal C}E)_\ast \operatorname{Hom}_{\mathcal C}(P_4,P_1\otimes_{\mathcal C}E) +d_4{}^\ast \operatorname{Hom}_{\mathcal C}(P_3,P_0\otimes_{\mathcal C}E) \subseteq \operatorname{Hom}_{\mathcal C}(P_4,P_0\otimes_{\mathcal C}E)$.

\begin{proposition}\label{prop_branch_toda_cosets}
With conventions of Definitions~\ref{def_twisted_complex} and~\ref{def_fourfold_bracket}, the Toda brackets are
\[
\langle d_1,d_2,d_3,d_4\rangle_{\mathcal V_-}=\mathcal I,
\qquad
\langle d_1,d_2,d_3,d_4\rangle_{\mathcal V_+}
=-m_4^+(d_1,d_2,d_3,d_4)+\mathcal I.
\]
\end{proposition}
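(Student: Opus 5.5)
Since all five objects $P_0,\dots,P_4$ lie in the $2$-split subcategory of $\mathcal V_b$, Proposition~\ref{prop_fourfold_indeterminacy} shows that each bracket is a single coset of
\[
\operatorname{im}\operatorname{Hom}(P_4,d_1[-2])+\operatorname{im}\operatorname{Hom}(d_4,P_0[-2]).
\]
Under the identification $\operatorname{Hom}_{\mathcal V_b}(P_p,P_q[-2])\cong\operatorname{Hom}_{\mathcal C}(P_p,P_q\otimes_{\mathcal C}E)$ established just before the statement, this indeterminacy is exactly $\mathcal I$. It therefore suffices to exhibit one element of each bracket. The plan is to compute that element explicitly inside $H^0\operatorname{Tw}(\Lambda_b^{(\leq\aleph_3)})$, using the fully faithful functor of Lemma~\ref{lem_large_matrix_realization}, and then transport it back along Remark~\ref{rem_toda_functoriality}.

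\textbf{Construction of $C$.} Regard $P_p$ as one-term twisted complexes, with $d_p$ given by degree-zero column-finite matrices. Take $C$ to be the twisted complex $P_1\oplus P_2[1]$ with twist $s d_2$, namely the cone of $d_2$. The maps $u$ and $q$ are the canonical inclusion and projection. This produces the first distinguished triangle in Definition~\ref{def_fourfold_bracket}.

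\textbf{Lifting $d_3$.} Next we lift $d_3[2]$ to a closed map $\delta\colon P_3[2]\to C[1]$. The natural candidate has components $d_3$ into $P_2$ and $0$ into $P_1$. Its closedness in $\operatorname{Tw}$ requires $m_2(d_2,d_3)=0$, which holds, together with the vanishing of the higher term $m_3$, which holds because $m_3=0$ in both $\Lambda_\pm$.

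\textbf{Construction of $Y$ and $\alpha$.} Set $Y=\operatorname{Cone}$, giving the three-term twisted complex $P_1\oplus P_2[1]\oplus P_3[2]$ with twists $sd_2$ and $sd_3$. The lift $\alpha\colon P_4[2]\to Y$ of $d_4[2]$ has component $d_4$ into $P_3[2]$ and zero elsewhere. Its closedness uses $d_3d_4=0$; for $\Lambda_+$ it also needs the terms $b_n$ evaluated on strings of twists followed by $\alpha$. The only possible nonzero term of this kind is $m_4^+(d_2,d_3,d_4,\cdot)$, but that requires four composable inputs, whereas here there are only three, so $\alpha$ is closed.

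\textbf{The map $\beta$.} Finally we need a closed $\beta\colon Y\to P_0$ restricting to $d_1$ on $P_1$. Write $\beta=(d_1,\beta_2,\beta_3)$, where $\beta_2\in\operatorname{Hom}^{-1}=0$ and $\beta_3\in\operatorname{Hom}^{-2}(P_3,P_0)=\operatorname{Hom}_{\mathcal C}(P_3,P_0\otimes E)$. Closedness of $\beta$ in $\operatorname{Tw}(\Lambda_b)$ is the condition
\[
m_2(d_1,d_2)=0 \quad\text{together with}\quad \beta_3\text{ on } P_3 \text{ satisfying } m_2(\beta_3,d_4\text{-direction})\text{ compatibility}.
\]
The decisive term is $b_4(\beta\text{-entry }d_1,sd_2,sd_3,\cdot)$. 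Rather than solving for $\beta_3$ directly, it is cleaner to compute the composite
\[
\gamma=\beta\circ\alpha=m_2^{\operatorname{Tw}}(\beta,\alpha).
\]
By the twisted composition formula, $m_2^{\operatorname{Tw}}(\beta,\alpha)=\sum b_{2+j}(\beta,q^{\otimes j},\alpha)$. The $j=0$ term gives $\beta_3 d_4$, which lies in $\mathcal I$, and the $j=2$ term gives $\pm m_4(d_1,d_2,d_3,d_4)$. For $\Lambda_-$ this term vanishes, so $\gamma\in\mathcal I$. For $\Lambda_+$ it is $m_4^+(d_1,d_2,d_3,d_4)$, which by Lemma~\ref{lem_square_zero_detector} is well defined entrywise on the resolution.

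\textbf{Main obstacle: signs.} The hard step is the sign bookkeeping. One must determine the Koszul and shift signs coming from $s^{\otimes 4}$ in $b_4$, the $(-1)^{(2-n)\sum|u_i|}$ factor in $m_n^{\Sigma\mathcal A}$, and the suspension identification $X_4[2]$. Together these should show that the sign is $-1$ and not $+1$. I would fix the sign by evaluating everything on a single basis simplex $[\alpha_0,\dots,\alpha_4]$ and comparing with the defining formula of $m_4^+$. I would also check that the freedom in choosing $\beta_3$, $\delta$ and $\alpha$ only changes $\gamma$ within $\mathcal I$, which is consistent with Proposition~\ref{prop_fourfold_indeterminacy}.
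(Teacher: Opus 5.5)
Your route is the paper's: the same cone objects $C$ and $Y$, the same $\alpha$ and $\beta$, the same evaluation of $\gamma=m_2^{\operatorname{Tw}}(\beta,\alpha)$ through the single $b_4$-path $[sd_1,sd_2,sd_3,s(d_4[2])]$, and the same transport along Lemma~\ref{lem_large_matrix_realization} and Remark~\ref{rem_toda_functoriality}.

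\textbf{The gap: the sign is never determined.} The sign is exactly what the statement asserts beyond ``the bracket is $\pm m_4^b(d_1,d_2,d_3,d_4)+\mathcal I$''. You also write the $\Lambda_+$ term once as $\pm m_4$ and once as $+m_4^+(d_1,\dots,d_4)$.

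The sign is not cosmetic. Since $(d_0\otimes_{\mathcal C}E)\circ m_4^+(d_1,d_2,d_3,d_4)=c$, and postcomposition with $d_0\otimes_{\mathcal C}E$ sends $\mathcal I$ to coboundaries, the cosets $\pm m_4^+(d_1,\dots,d_4)+\mathcal I$ can only coincide if $2[c]=0$.

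Your proposed fix looks in the wrong place. Evaluating on one simplex and comparing with the definition of $m_4^+$ cannot help: that formula carries no sign, and the sign is the same for every simplex, coming only from the shift conventions.

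Concretely, let $x_1,x_2,x_3,x_4$ denote $d_1$, $d_2$, $d_3$, $d_4[2]$ as elements of $\Sigma\Lambda_b^{(\leq\aleph_3)}$, so that the path is $[sx_1,sx_2,sx_3,sx_4]$. Their degrees are $0,1,1,0$; the middle two are $1$ because the source is shifted one step further than the target. The Koszul rule gives
\[
s^{\otimes4}(x_1\otimes\cdots\otimes x_4)=(-1)^{3|x_1|+2|x_2|+|x_3|}\,sx_1\otimes\cdots\otimes sx_4=-\,sx_1\otimes\cdots\otimes sx_4,
\]
so $s^{-1}b_4(sx_1,\dots,sx_4)=-m_4^{\Sigma}(x_1,\dots,x_4)$. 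The sign in the formula for $m_4^{\Sigma}$ is $(-1)^{0+(2-4)(0+1+1+0)}=1$, because every $d_j$ has degree $0$ in $\mathcal B$. Hence $\gamma=-m_4^b(d_1,d_2,d_3,d_4)$; this is precisely the step the paper carries out.

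\textbf{A smaller point: closedness of $\beta$ and $\alpha$.} Your paragraph on the closedness of $\beta$ is confused. $Y$ has no ``$d_4$-direction'', and no $b_4$-term can enter $d\beta=\sum_j b_{1+j}(s\beta,q_Y^{\otimes j})$, because composable strings of twist entries in $Y$ have length at most $2$.

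The only conditions are $d_1d_2=0$ and $m_3(d_1,d_2,d_3)=0$. So $\beta=(d_1,0,\beta_3)$ is closed for every $\beta_3$, and you may simply take $\beta_3=0$ as the paper does. Likewise, closedness of $\alpha$ involves only $d_3d_4=0$ and $m_3(d_2,d_3,d_4)=0$, not any $m_4$-term.

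With these two corrections your argument coincides with the paper's proof.
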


\begin{proof}
Consider $P_4 \xrightarrow{d_4}P_3 \xrightarrow{d_3}P_2 \xrightarrow{d_2}P_1 \xrightarrow{d_1}P_0$.
Since the terms belong to a $2$-split subcategory, Proposition~\ref{prop_fourfold_indeterminacy} gives a nonempty Toda coset with indeterminacy $\mathcal I$.

Fix $b \in \{+, -\}$.
We work in $\operatorname{Tw}(\Lambda_b^{(\leq\aleph_3)})$.
By definition, $\langle d_1,d_2,d_3,d_4\rangle$ consists of the morphism $\gamma:P_4[2]\to P_0$ for which there exist objects $C,Y$ and morphisms satisfying the certain conditions.
Take
\[
C=\left(P_1\oplus P_2[1],
q_C = \begin{psmallmatrix}0&sd_2\\0&0\end{psmallmatrix}\right),
\qquad
Y=\left(P_1\oplus P_2[1]\oplus P_3[2],
q_Y = \begin{psmallmatrix}0&sd_2&0\\0&0&sd_3\\0&0&0\end{psmallmatrix}\right).
\]
    \begin{enumerate}
        \item The morphism $\gamma$ is defined as the composite $P_4[2]\xlongrightarrow{\alpha=\begin{psmallmatrix}0\\0\\d_4[2]\end{psmallmatrix}} Y\xlongrightarrow{\beta=\begin{psmallmatrix}d_1&0&0\end{psmallmatrix}} P_0$.
        \item The following are distinguished triangles:
              \[
                  P_1\xlongrightarrow{u=\begin{psmallmatrix}1\\0\end{psmallmatrix}}C\xlongrightarrow{q=\begin{psmallmatrix}0&1\end{psmallmatrix}}P_2[1]\xlongrightarrow{d_2[1]}P_1[1],\qquad
                  C\xlongrightarrow{v=\begin{psmallmatrix}1&0\\0&1\\0&0\end{psmallmatrix}}Y\xlongrightarrow{p=\begin{psmallmatrix}0&0&1\end{psmallmatrix}}P_3[2]\xlongrightarrow{\delta=\begin{psmallmatrix}0\\ d_3[2]\end{psmallmatrix}}C[1].
              \]
        \item The following three diagrams commute:
              \[
                  \xymatrix@C=3em@R=1.5em{
                      P_3[2] \ar[r]^-\delta \ar[dr]_-{d_3[2]}
                      & C[1] \ar[d]^-{q[1]}
                      & P_4[2] \ar[r]^-\alpha \ar[dr]_-{d_4[2]}
                      & Y \ar[d]^-p
                      & P_1 \ar[r]^-{v\circ u} \ar[dr]_-{d_1}
                      & Y \ar[d]^-\beta \\
                      & P_2[2] & & P_3[2] & & P_0
                  }
              \]
    \end{enumerate}
    The above are easily checked.
    Hence we regard $\langle d_1,d_2,d_3,d_4\rangle$ as a subset of $\operatorname{Hom}_{\mathcal V_b}(P_4,P_0[-2])$.
It remains to show that $-m_4^b(d_1,d_2,d_3,d_4)
\in\langle d_1,d_2,d_3,d_4\rangle_{\mathcal V_b}$.
Note that
\begin{align*}
    [\gamma] & = [\beta] \circ [\alpha] = [m_2^{\operatorname{Tw}}(\beta, \alpha)] = s^{-1} b_2^{\operatorname{Tw}}(s \beta, s\alpha) =s^{-1}\sum_{j\geq0}b_{j+2}(s\beta,q_Y^{\otimes j},s\alpha)\\
    &=s^{-1}\bigl(b_2(s\beta,s\alpha)
+b_4(s\beta,q_Y,q_Y,s\alpha)\bigr)
\overset{\langle 1\rangle}=s^{-1}b_4\bigl(sd_1,sd_2,sd_3,s(d_4[2])\bigr)\\
& = -(m_4^b)^\Sigma (d_1, d_2,d_3,d_4[2]) \overset{\langle 2\rangle}=-m_4^b(d_1,d_2,d_3,d_4[2]).
\end{align*}
For $\langle 1\rangle$, expanding the matrix gives $b_2(s\beta,s\alpha)
=b_2(sd_1,0)+b_2(0,0)+b_2(0,s(d_4[2]))=0$ and and the unique path contributing to the matrix expansion of $b_4(s\beta,q_Y,q_Y,s\alpha)$ is $[sd_1,sd_2,sd_3,s(d_4[2])]$.
The Koszul rule for $m^\Sigma$ in the Preliminaries gives $\langle 2\rangle$.

Finally Lemma~\ref{lem_large_matrix_realization} and Remark~\ref{rem_toda_functoriality} transport these Toda cosets to $\mathcal V_b$.
Since $m_4^-=0$, this proves the two asserted coset formulas.
\end{proof}

\begin{proposition}\label{prop_corrected_toda_endpoint}
For every $f\in\langle d_1,d_2,d_3,d_4\rangle_{\mathcal V_b}$, the composite
$(d_0\otimes_{\mathcal C}E)\circ f:P_4 \to P_0\otimes_{\mathcal C} E\to \underline R \otimes_{\mathcal C} E = A$ is a cocycle and
\[
\bigl[(d_0\otimes_{\mathcal C}E)\circ f\bigr]
=\begin{cases}
0,&b=-,\\
-[c] (\neq 0),&b=+,
\end{cases}
\quad\text{in }\operatorname{Ext}_{\mathcal C}^4(\underline R,A).
\]
\end{proposition}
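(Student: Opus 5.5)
By Proposition~\ref{prop_branch_toda_cosets}, every $f\in\langle d_1,d_2,d_3,d_4\rangle_{\mathcal V_b}$ has the form $f=-m_4^b(d_1,d_2,d_3,d_4)+g$ with $g\in\mathcal I$. The plan is to handle the two summands separately after composing with $d_0\otimes_{\mathcal C}E$. First I would check that $(d_0\otimes_{\mathcal C}E)\circ g$ is a coboundary in the complex $\operatorname{Hom}_{\mathcal C}(P_\bullet,A)$ computing $\operatorname{Ext}_{\mathcal C}^\ast(\underline R,A)$. Then I would identify $(d_0\otimes_{\mathcal C}E)\circ m_4^+(d_1,d_2,d_3,d_4)$ with the cocycle $c$ itself.

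\emph{Indeterminacy.} Write $g=(d_1\otimes E)\circ x+y\circ d_4$ with $x\in\operatorname{Hom}_{\mathcal C}(P_4,P_1\otimes_{\mathcal C}E)$ and $y\in\operatorname{Hom}_{\mathcal C}(P_3,P_0\otimes_{\mathcal C}E)$. Since $d_0d_1=0$, the first term dies after composing with $d_0\otimes E$. The second term becomes $\bigl((d_0\otimes E)\circ y\bigr)\circ d_4$, which is the coboundary of the $3$-cochain $(d_0\otimes E)\circ y\in\operatorname{Hom}_{\mathcal C}(P_3,A)$. Here one uses $\underline R\otimes_{\mathcal C}E\cong A$, which holds because $E(\alpha,\beta)=A(\alpha)$ is constant in $\beta$, so tensoring with the constant presheaf just returns $A$. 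For $b=-$ this already finishes the proof: $f=g$ is a cocycle whose class is $0$. For the cocycle condition in general, note that $A$ sits in degree $4$ of a complex with no $\operatorname{Hom}(P_5,\cdot)$ obstruction beyond composing with $d_5$. It therefore suffices to show $(d_0\otimes E)\circ f\circ d_5=0$, which follows once we identify the $m_4^+$ term with $c$, since $c\circ d_5=d_4d_5=0$.

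\emph{The main computation} is to evaluate $m_4^+(d_1,d_2,d_3,d_4)$ as a column-finite matrix. Take a basis element $[\alpha_0,\dots,\alpha_4]$ of $P_4$, a copy of $y_{\alpha_0}$. The matrix entries of $d_p$ are $\pm u$'s, one for each face. Expanding entrywise, $m_4^+$ sums over paths $[\alpha_0..\alpha_4]\to$ a $3$-face $\to$ a $2$-face $\to$ a $1$-face $\to$ a $0$-face $[\gamma]$, with the entry equal to $m_4^+$ of four structure maps $u$. By the normalisation in the definition, $m_4^+$ vanishes if any input is an identity. The only surviving inputs are therefore strictly increasing chains of ordinals, with the source ordinal $\alpha_0$ of $y_{\alpha_0}$ fixed. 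Along a path of successive face deletions, the underlying objects of the summands are the minimal elements of the faces. A strictly increasing chain from $\alpha_0$ through the minima of the faces forces the minimal element to be deleted at each step, giving minima $\alpha_0<\alpha_1<\alpha_2<\alpha_3<\alpha_4$. That pins down the unique path deleting $\alpha_0,\alpha_1,\alpha_2,\alpha_3$ in order, with sign $(-1)^0$ at each step. Its contribution is $c_{\alpha_0}[\alpha_0,\dots,\alpha_4]\in A(\alpha_0)$ placed in the summand $[\alpha_4]$ of $P_0\otimes E$. Composing with $d_0\otimes E$, which sends each $[\gamma]$ to $1$, gives $c$ on this basis element. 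Hence $(d_0\otimes E)\circ m_4^+(d_1,\dots,d_4)=c$. The ordinal bookkeeping has to be read carefully: which index is the source object of each $y$, and the orientation reversal between the paper's $u_{\alpha_{j-1},\alpha_j}$ and the direction of the $d$'s.

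\emph{Expected obstacle and conclusion.} The main obstacle is the sign and orientation bookkeeping: confirming that no other paths, such as non-minimal deletions, give nonzero $m_4^+$, and that the Koszul signs from $s$ and from $d_4[2]$ are already absorbed into the minus sign of Proposition~\ref{prop_branch_toda_cosets}. If a stray sign appears, I would recheck it against the $N=5$ Stasheff computation of Lemma~\ref{lem_square_zero_detector}, which uses the same face conventions. With the computation in place, $(d_0\otimes E)\circ f=-c+\text{coboundary}$, so the class is $-[c]$, which is nonzero by Lemma~\ref{lem_ordinal_nonvanishing}.
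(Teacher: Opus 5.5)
Your proposal is correct and follows essentially the same route as the paper. You isolate the unique surviving path $[\alpha_0,\dots,\alpha_4]\to[\alpha_1,\dots,\alpha_4]\to\cdots\to[\alpha_4]$ to get $(d_0\otimes_{\mathcal C}E)\circ m_4^+(d_1,d_2,d_3,d_4)=c$, and you dispose of the indeterminacy $\mathcal I$ using $d_0d_1=0$ and the fact that $((d_0\otimes_{\mathcal C}E)\circ y)\circ d_4$ is a coboundary; the only cosmetic difference is that you split $f=-m_4^b(d_1,\dots,d_4)+g$ directly, whereas the paper compares two elements $f,f'$ of the bracket.
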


\begin{proof}
Note that for ordinals $\alpha_0 < \alpha_1 < \alpha_2 < \alpha_3 < \alpha _4 <\omega_3$, one has
\[
m_4^+(u_{\alpha_3,\alpha_4},u_{\alpha_2,\alpha_3},u_{\alpha_1,\alpha_2},u_{\alpha_0,\alpha_1}) = c_{\alpha_0}[\alpha_0, \ldots, \alpha_4].
\]
Indeed, since $m_4^+(\ldots, 1, \ldots) = 0$, the only surviving path from $\sigma = [\alpha_0, \ldots, \alpha_4]$ to $\varnothing$ is
\[
[\alpha_0, \ldots, \alpha_4] \to [\alpha_1, \ldots, \alpha_4] \to 
[\alpha_2, \alpha_3, \alpha_4] \to 
[\alpha_3, \alpha_4] \to 
[\alpha_4] \to \varnothing.
\]
Take the canonical inclusion $\iota_{\alpha_4}:A(\alpha_0)
\to
(P_0\otimes_{\mathcal C}E)(\alpha_0)
\cong\coprod_{\beta<\omega_3}A(\alpha_0)$.
Then the following composite is identical
\[
(d_0 \otimes_{\mathcal C} E) \circ \iota_{\alpha_4} : A(\alpha_0) \longrightarrow (P_0 \otimes _{\mathcal C} E)(\alpha_0) \longrightarrow (\underline{R} \otimes_{\mathcal C} E)(\alpha_0)=A(\alpha_0).
\]
It follows that 
\[
\bigl((d_0\otimes_{\mathcal C}E)\circ (m_4^+(d_1,d_2,d_3,d_4))\bigr)_{\alpha_0}(\sigma)=
(d_0\otimes_{\mathcal C}E)_{\alpha_0} \bigl(\iota_{\alpha_4}
\bigl(c_{\alpha_0}(\sigma)\bigr)\bigr)=c_{\alpha_0}(\sigma)
\]
Since the above simplex summands generate $P_4$, we obtain
\[
(d_0\otimes_{\mathcal C}E)\circ m_4^+(d_1,d_2,d_3,d_4)=c,
\qquad c\circ d_5=0.
\]
Thus the composite give the classes $0$ and $-[c]$, respectively.

We then show the cohomology class is independent of the representative in Toda bracket.
Take another $f' \in\langle d_1,d_2,d_3,d_4\rangle_{\mathcal V_b}$.
Then by the construction of $\mathcal I$ in Proposition~\ref{prop_branch_toda_cosets}, take $f-f' = (d_1\otimes_{\mathcal C}E)a+v d_4$ with $a:P_4\to P_1\otimes_{\mathcal C}E$ and $v:P_3\to P_0\otimes_{\mathcal C}E$.
Thus
\[
(d_0\otimes_{\mathcal C}E)\circ\bigl((d_1\otimes_{\mathcal C}E)\circ a+v\circ d_4\bigr)
=\bigl((d_0\otimes_{\mathcal C}E)\circ v\bigr)\circ d_4.
\]
The right-hand side is a degree-four boundary in $\mathcal{H}om_{\mathcal C}(P_\bullet,A)$, thus is killed by $[-]$.

Finally, $[c]\ne0$ follows from Lemma~\ref{lem_ordinal_nonvanishing}.
\end{proof}

\subsection{Construct \texorpdfstring{$\mathcal T$}{T} from
\texorpdfstring{$\mathcal V_\pm$}{Vpm}}
\label{subsec_function_coefficients}

We retain the cardinal $\theta$ in the outline and put
\[
I=\theta\times\{+,-\},\qquad
\operatorname{Sign}(\xi,+)=+,\qquad
\operatorname{Sign}(\xi,-)=-.
\]
We construct $\mathcal T$ by adjoining triangulated categories $\mathcal T_J$'s with a common perfect core:
\begin{equation}\label{eq_TJ}
    \mathcal T_J
=\left(\prod_{i\in J}\mathcal V_{\operatorname{Sign}(i)}\right)
\times\operatorname{Perf}(\Lambda_-(I\setminus J)), \qquad |J|<\aleph_0
\end{equation}

For $V \in \mathrm{Mod}_R$, define the set of ``almost constant'' maps as follows
\[
\mathsf L(V) =\left\{ f:I\to V\ \middle|\ |f(I)|<\aleph_0,\quad \exists S\subseteq I,\ |S|<\theta,\quad f|_{I\setminus S} \ \text{is constant}.\right\}.
\]
A subset $U\subseteq I$ is \emph{admissible} if
$|U|<\theta$ or $|I\setminus U|<\theta$.
For admissible $U\subseteq I$, define
\[
\mathsf L_U(V)=\{f|_U\mid f\in\mathsf L(V)\}.
\]
We write $\operatorname{ev}_i$ for evaluation at $i\in I$.
For functions in $\mathsf L(V)$, we write $\operatorname{ev}_\infty$
for their unique common value outside a subset of
cardinality less than $\theta$.

\begin{lemma}\label{lem_function_coefficient_splitting}
Let $U\subseteq I$ be admissible and let $V$ be an $R$-module.
\begin{enumerate}
\item Pointwise operations make $\mathsf L_U(V)$ an
$R$-module.
\item The following canonical map is an isomorphism
\[
\mathsf L_U(R)\otimes_R V\longrightarrow\mathsf L_U(V),
\qquad a\otimes v\longmapsto(i\mapsto a(i)v).
\]
\item A finite partition $U=\coprod_{r=1}^nU_r$ of admissible subsets induces an isomorphism
\[
\mathsf L_U(V)\xlongrightarrow{\sim}
\bigoplus_{r=1}^n\mathsf L_{U_r}(V), \qquad f \longmapsto (f|_{U_1}, f|_{U_2}, \ldots, f|_{U_n}).
\]
\end{enumerate}
\end{lemma}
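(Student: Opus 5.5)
The plan is to first describe $\mathsf L_U(V)$ intrinsically, and then read off all three parts from that description. Fix admissible $U$. I claim
\[
\mathsf L_U(V)=\bigl\{g:U\to V \bigm| |g(U)|<\aleph_0,\ \exists S\subseteq U,\ |S|<\theta,\ g|_{U\setminus S}\text{ constant}\bigr\}.
\]
The inclusion $\subseteq$ is immediate by restricting $S$. For the converse there are two cases.
\begin{enumerate}
\item If $|U|<\theta$, the constancy condition is vacuous. Then any finite-valued $g$ extends by $0$ on $I\setminus U$, and the extension lies in $\mathsf L(V)$ with $S=U$.
\item If $|I\setminus U|<\theta$, extend $g$ by its eventual constant value on $I\setminus U$. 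The exceptional set is $S\cup(I\setminus U)$, which has cardinality $<\theta$ because $\theta$ is infinite.
\end{enumerate}

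With this description, part (1) is routine. Closure under scalars is clear. For sums $g+g'$, the image lies in the finite set $g(U)+g'(U)$, and $g+g'$ is constant off $S\cup S'$, which again has cardinality $<\theta$.

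For part (2), I use that a finite-valued $g$ can be written as $g=\sum_{v\in g(U)}\mathbf 1_{g^{-1}(v)}\,v$. Each indicator $\mathbf 1_{g^{-1}(v)}$ lies in $\mathsf L_U(R)$. Indeed, each fibre either has cardinality $<\theta$ or is the generic fibre, whose complement has cardinality $<\theta$. This gives surjectivity.

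Injectivity is the main step. Let $\sum_k a_k\otimes v_k$ map to $0$. The $a_k$ take finitely many values, so their joint level sets partition $U$ into finitely many blocks $B_1,\dots,B_m$. By the same fibre argument, each $\mathbf 1_{B_l}$ lies in $\mathsf L_U(R)$. Writing $a_k=\sum_l a_k(B_l)\mathbf 1_{B_l}$, the tensor becomes $\sum_l \mathbf 1_{B_l}\otimes w_l$ with $w_l=\sum_k a_k(B_l)v_k$. Evaluating the image at a point of $B_l$ gives $w_l=0$, provided $B_l\neq\varnothing$, which I arrange by discarding empty blocks. So the tensor is $0$. I expect this to be the only genuinely delicate step: one must make sure the refinement into blocks stays inside $\mathsf L_U(R)$ and does not need an arbitrary partition, and the fibre argument handles exactly that.

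For part (3), restriction is well defined and linear because each $U_r$ is admissible. For the inverse, glue $(g_r)$ to a function $g$ on $U$. It has finite image as a finite union of finite images. Its constancy on $U$ off a set of size $<\theta$ needs a short check.
\begin{enumerate}
\item If $|U|<\theta$, this is vacuous.
\item Otherwise, every $U_r$ with $|U_r|<\theta$ is absorbed into the exceptional set. At most one $U_r$ can be co-small in $I$: two disjoint co-small subsets would force $|I|<\theta$, but $|I|=\theta$. So exactly one big piece remains, contributing its own exceptional set.
\end{enumerate}
Injectivity of restriction is clear, since the restrictions to the pieces determine $g$.
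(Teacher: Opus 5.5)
Your proof is correct, and it is the routine verification the paper leaves implicit: its proof of this lemma reads only ``Straightforward''. Your intrinsic description of $\mathsf L_U(V)$ via the small/co-small dichotomy for admissible sets is the natural way to carry out that verification. With it, the surjectivity argument, the block-decomposition argument for injectivity in (2), and the ``at most one co-small piece'' argument in (3) all go through as you wrote them.
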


\begin{proof}
Straightforward.
\end{proof}

\begin{definition}\label{def_function_coefficient_core}
For admissible $U\subseteq I$, let $\Lambda_-(U)$ be the
dg category with $\operatorname{Ob}=\omega_3$ and
\[
\mathcal{H}om_{\Lambda_-(U)}(\alpha,\beta)^d
=\mathsf L_U\!\left(\mathcal{H}om_{\mathcal B}(\alpha,\beta)^d
\right).
\]
Its differential is zero and its composition is pointwise.
Evaluation and restriction act on coefficients and leave
the object labels unchanged.
\end{definition}

\begin{lemma}\label{lem_function_core_splitting}
A finite partition
$U=\coprod_{r=1}^nU_r$ of admissible subsets induces a triangle equivalence by restriction
\[
\operatorname{Perf}(\Lambda_-(U))
\xrightarrow{\sim}
\prod_{r=1}^n\operatorname{Perf}(\Lambda_-(U_r)), 
\]
\end{lemma}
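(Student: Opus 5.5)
The plan is to reduce the statement to a decomposition at the level of dg categories and then pass to perfect categories. By Lemma~\ref{lem_function_coefficient_splitting}(3), restriction gives an isomorphism of graded $R$-modules
\[
\mathcal{H}om_{\Lambda_-(U)}(\alpha,\beta)^d=\mathsf L_U\bigl(\mathcal{H}om_{\mathcal B}(\alpha,\beta)^d\bigr)\xrightarrow{\sim}\bigoplus_{r=1}^n\mathsf L_{U_r}\bigl(\mathcal{H}om_{\mathcal B}(\alpha,\beta)^d\bigr).
\]
Composition is pointwise and differentials vanish, so this is compatible with all dg operations. Write $e_r\in\mathsf L_U(R)$ for the indicator function of $U_r$; it lies in $\mathsf L_U(R)$ because each $U_r$ is admissible, so either $U_r$ or its complement in $I$ has cardinality $<\theta$. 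The $e_r$ are orthogonal idempotents summing to $1$, and they act centrally on every Hom module. The dg category $\Lambda_-(U)$ is therefore the ``$R$-linear product over the common object set'': each $1_\alpha$ splits as $\sum_r e_r1_\alpha$, and the full Hom modules decompose accordingly.

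First, define the restriction dg functors $\rho_r:\Lambda_-(U)\to\Lambda_-(U_r)$, which are the identity on objects and $f\mapsto f|_{U_r}$ on morphisms. Then take the product functor
\[
\rho=(\operatorname{Perf}\rho_r)_r:\operatorname{Perf}(\Lambda_-(U))\to\prod_r\operatorname{Perf}(\Lambda_-(U_r)).
\]
It is a triangle functor by Proposition~\ref{prop_twisted_functor_extension} and idempotent completion, since a product of triangulated categories is triangulated componentwise.

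Second, show that $\rho$ is fully faithful. On one-term objects $(\alpha,0)$, $(\beta,0)[d]$ the Hom groups are the displayed modules, because the category is minimal (Lemma~\ref{lem_minimal_cohomology}). The map $\rho$ is then exactly the isomorphism of Lemma~\ref{lem_function_coefficient_splitting}(3), and the Hom in the product category is the direct sum. A five-lemma induction over cones extends this to all twisted complexes, and idempotent completion then preserves full faithfulness.

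Third, show that $\rho$ is essentially surjective. This is the step I expect to be the main obstacle, because an object of the product is a tuple $(X_1,\dots,X_n)$ of unrelated twisted complexes, possibly idempotent summands. In $\operatorname{Perf}(\Lambda_-(U))$ the object $(\alpha,0)$ splits as $\bigoplus_r(\alpha,e_r)$, where $(\alpha,e_r)$ is the direct summand cut out by the idempotent $e_r1_\alpha$. Its image under $\rho$ is $(\alpha,0)$ in the $r$-th factor and $0$ in the others. Hence every object of the form $(0,\dots,(\alpha,0)[a],\dots,0)$ lies in the essential image. The essential image of a fully faithful triangle functor into an idempotent complete category, with idempotent complete source, is a thick subcategory. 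It contains a set of generators of the target, since $\prod_r\operatorname{Perf}(\Lambda_-(U_r))$ is generated as a thick subcategory by the shifted one-term objects placed in a single factor. Therefore the essential image is everything, and $\rho$ is an equivalence.

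The care needed here is that twisted complexes only use finite formal sums. Generation of each $\operatorname{Perf}(\Lambda_-(U_r))$ by the $(\alpha,0)$ holds by definition of $\operatorname{Perf}$ as the idempotent completion of $H^0\operatorname{Tw}$, so no size issue arises.
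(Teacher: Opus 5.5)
Your proposal is correct and follows essentially the same route as the paper. Both arguments use the central idempotents $1_{U_r}$: they split the Hom modules (Lemma~\ref{lem_function_coefficient_splitting}), which gives full faithfulness, and in the idempotent-complete $\operatorname{Perf}$ they split each object into its $U_r$-components, which gives essential surjectivity. The paper's short proof leaves implicit the details you supply: admissibility guaranteeing $1_{U_r}\in\mathsf L_U(R)$, the five-lemma induction from one-term objects, and the argument that the essential image is thick and contains generators.
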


\begin{proof}
By Lemma~\ref{lem_function_coefficient_splitting}, the
morphism modules split according to the central idempotents
$1_{U_r}$.
After passing to perfect categories, these idempotents split
objects into their components on the parts $U_r$.
This gives the asserted equivalence.
\end{proof}

We identify $\operatorname{Perf}(\Lambda_-)
\xrightarrow{\sim}\operatorname{Perf}(\Lambda_+)$ and take the canonical fully faithful triangle functor $\iota_\pm:\operatorname{Perf}(\Lambda_\pm)
\hookrightarrow\mathcal V_\pm$.
For each finite subset $J\subseteq I$, define
\[
\mathcal T_J
=
\left(\prod_{i\in J}\mathcal V_{\operatorname{Sign}(i)}\right)
\times\operatorname{Perf}(\Lambda_-(I\setminus J)).
\]

\begin{proposition}\label{prop_finite_branch_transitions}
The categories $\mathcal T_J$, indexed by
finite subsets of $I$, form a directed system of
fully faithful triangle functors with thick images.
\end{proposition}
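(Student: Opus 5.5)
For finite $J\subseteq J'\subseteq I$, we construct the transition functor $\Phi_{J,J'}:\mathcal T_J\to\mathcal T_{J'}$ explicitly, check that it is a fully faithful triangle functor with thick image, and then check the cocycle condition $\Phi_{J',J''}\Phi_{J,J'}\cong\Phi_{J,J''}$ (with $\Phi_{J,J}=\mathrm{id}$). Set $K=J'\setminus J$; it is finite, so both $K$ and $I\setminus J'$ are admissible. Then $I\setminus J=K\sqcup(I\setminus J')$ is a finite partition into admissible subsets. Lemma~\ref{lem_function_core_splitting} gives a restriction equivalence
\[
\operatorname{Perf}(\Lambda_-(I\setminus J))\xrightarrow{\sim}\prod_{i\in K}\operatorname{Perf}(\Lambda_-(\{i\}))\times\operatorname{Perf}(\Lambda_-(I\setminus J')),
\]
where we further split $K$ into singletons. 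For a singleton, $\mathsf L_{\{i\}}(V)=V$, so $\Lambda_-(\{i\})=\Lambda_-$ and $\operatorname{Perf}(\Lambda_-(\{i\}))=\operatorname{Perf}(\Lambda_-)$.

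On the factor indexed by $i\in K$, compose with $\iota_-:\operatorname{Perf}(\Lambda_-)\hookrightarrow\mathcal V_-$ if $\operatorname{Sign}(i)=-$. If $\operatorname{Sign}(i)=+$, compose instead with the equivalence $\operatorname{Perf}(\Lambda_-)\simeq\operatorname{Perf}(\Lambda_+)$ of Proposition~\ref{prop_marked_core_descent}, followed by $\iota_+$. On the factors indexed by $J$, take the identity of $\prod_{i\in J}\mathcal V_{\operatorname{Sign}(i)}$. Then $\Phi_{J,J'}$ is the product of these functors. Each factor is a fully faithful triangle functor: $\iota_\pm$ is so by Lemma~\ref{lem_large_matrix_realization} and the remark following it, and the other pieces are equivalences. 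A finite product of fully faithful triangle functors is again one, since the triangulation on a product is componentwise.

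The image of $\Phi_{J,J'}$ is a product of the images. A product of thick subcategories is thick in a finite product, because summands and cones are computed componentwise. So it suffices to see that $\operatorname{Perf}(\Lambda_\pm)$ is thick in $\mathcal V_\pm$. It is a triangulated subcategory, being the essential image of a fully faithful triangle functor. It is closed under direct summands because $\operatorname{Perf}$ is idempotent complete by definition: a summand in $\mathcal V_\pm$ of an object of $\operatorname{Perf}$ is given by an idempotent, which already splits inside $\operatorname{Perf}$. Fully faithfulness makes that splitting agree with the one in $\mathcal V_\pm$.

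The main obstacle is functoriality of the system, namely that $\Phi_{J',J''}\circ\Phi_{J,J'}$ agrees with $\Phi_{J,J''}$. This should hold on the nose, or at least up to a canonical coherent isomorphism. Restriction maps $\mathsf L_U\to\mathsf L_{U'}$ compose strictly, so the splitting equivalences of Lemma~\ref{lem_function_core_splitting} compose strictly at the dg level $\Lambda_-(U)\to\Lambda_-(U')$, before any idempotent completion is taken. The identification $\operatorname{Perf}(\Lambda_-)\simeq\operatorname{Perf}(\Lambda_+)$ is applied only once per index $i$, at the stage where $i$ leaves the core, so no composite of two such identifications ever arises. To make the system strict rather than pseudo-functorial, we would fix one choice of that equivalence and of the splitting isomorphisms once and for all. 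If strictness fails, the fallback is to record the canonical isomorphisms and pass to a pseudo-colimit, which is harmless for forming $\mathcal T$.
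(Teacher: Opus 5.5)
Your proposal is correct and follows essentially the same route as the paper. You split $\operatorname{Perf}(\Lambda_-(I\setminus J))$ along the partition of $I\setminus J$ into the singletons of $J'\setminus J$ and $I\setminus J'$ via Lemma~\ref{lem_function_core_splitting}, apply $\iota_{\operatorname{Sign}(k)}$ on the singleton factors, and inherit full faithfulness and thickness of the image from $\iota_\pm$. You add slightly more detail than the paper: the idempotent-completeness argument for thickness of $\operatorname{Perf}(\Lambda_\pm)$ in $\mathcal V_\pm$, and the remark that restrictions compose strictly so that $\Phi_{J',J''}\Phi_{J,J'}=\Phi_{J,J''}$, where the paper simply identifies the stages inside a common union model.
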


\begin{proof}
For finite subsets $J$ and $K$ with $J\subseteq K\subseteq I$, applying Lemma~\ref{lem_function_core_splitting} to the partition $I\setminus J
=\coprod_{k\in K\setminus J}\{k\}
\amalg(I\setminus K)$ gives an equivalence
\[
\operatorname{Perf}(\Lambda_-(I\setminus J))
\simeq
\left(\prod_{k\in K\setminus J}
\operatorname{Perf}(\Lambda_-)\right)
\times\operatorname{Perf}(\Lambda_-(I\setminus K)).
\]
Then use $\iota_{\operatorname{Sign}(k)}$ to get the transition functor $\Phi_{J,K}:\mathcal T_J
\to\mathcal T_K$.
Explicitly,
\[
\Phi_{J,K}\bigl((X_i)_{i\in J},P\bigr)
=
\left(
(X_i)_{i\in J},
\bigl(\iota_{\operatorname{Sign}(k)}
(\operatorname{ev}_kP)\bigr)_{k\in K\setminus J},
P|_{I\setminus K}
\right).
\]
$\Phi_{J,K}$ is fully faithful and has thick image, since these
properties hold for $\iota_\pm$.

For finite subsets $J$, $K$ and $L$ with $J\subseteq K\subseteq L$, one has $\Phi_{K,L}\Phi_{J,K} = \Phi_{J,L}$.
This equality should be an isomorphism; for convenience we identify the finite stages with their images in a common union model, so that the transition functors are inclusions.

The system is clearly directed.
\end{proof}

For infinite subset $J \subseteq I$, put $\mathcal T_J = \varinjlim_{K \subseteq J, \ |K| < \aleph_0}\mathcal T_K$.
Write $\mathcal T = \mathcal T_I$.

\subsection{Subcategories generated by fewer than \texorpdfstring{$\theta$}{theta} objects}\label{subsec_small_generated_algebraicity}

We show that every thick subcategory of $\mathcal T$ generated by fewer than $\theta$ objects is algebraic over $R$;
equivalently, $\mathcal T_J$ is algebraic for $|J| < \theta$.
We fix such $J$, and let $e_J : I \to R$ be map in $\mathsf L(R)$ by setting 
\[
e_J(i)=
\begin{cases}
1,&i\in J,\ \operatorname{Sign}(i)=+,\\
0,&\text{otherwise}.
\end{cases}
\]

\begin{lemma}
    There is an $A_\infty$ category $\Lambda_J$ whose underlying graded category and $m_{\neq 4}$ coincide with the dg category $\Lambda_-(I)$, and $\alpha_0\xrightarrow{f_1}\alpha_1
\xrightarrow{f_2}\alpha_2
\xrightarrow{f_3}\alpha_3
\xrightarrow{f_4}\alpha_4$ gives that
    \[
    m_4(f_4,f_3,f_2,f_1)(i) =e_J(i) \cdot m_4^+(f_4(i),f_3(i),f_2(i),f_1(i)).
    \]
\end{lemma}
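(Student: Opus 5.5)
The plan is to define $m_4$ on $\Lambda_J$ pointwise and reduce all the identities to the pointwise identities already proved for $\Lambda_+$ and $\Lambda_-$ in Lemma~\ref{lem_square_zero_detector}. A morphism of $\Lambda_-(I)$ in degree $d$ is a function $f\colon I\to\mathcal{H}om_{\mathcal B}(\alpha,\beta)^d$ in $\mathsf L(\mathcal{H}om_{\mathcal B}(\alpha,\beta)^d)$. Set $m_1=0$, take $m_2$ to be pointwise composition, and set $m_n=0$ for $n\geq 3$, $n\neq 4$. Define
\[
m_4(f_4,f_3,f_2,f_1)(i)=e_J(i)\,m_4^+\bigl(f_4(i),f_3(i),f_2(i),f_1(i)\bigr),
\]
where $m_4^+$ is extended $R$-multilinearly as in the definition of $\Lambda_+$.

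\textbf{Step 1: $m_4$ is well defined.} We must check that the right-hand side lies in $\mathsf L(A(\alpha_0))=\mathsf L(\mathcal{H}om_{\mathcal B}(\alpha_0,\alpha_4)^{-2})$.
\begin{enumerate}
\item Each $f_k$ takes finitely many values and is constant off a set $S_k$ with $|S_k|<\theta$.
\item The map $e_J$ takes values in $\{0,1\}$ and vanishes off $J$, where $|J|<\theta$.
\item Hence the output takes finitely many values and is constant (indeed, equal to $0$) off $J\cup S_1\cup\cdots\cup S_4$.
\item This set has cardinality $<\theta$, since $\theta$ is infinite and a finite union of sets of size $<\theta$ has size $<\theta$.
\end{enumerate}
Multilinearity and degree $-2$ are inherited pointwise, since $R$ acts pointwise. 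If $J$ is admissible, the same argument works for restrictions, but only $U=I$ is needed here.

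\textbf{Step 2: Stasheff identities and strict units.} Evaluation $\operatorname{ev}_i$ is compatible with every operation. At a point $i$:
\begin{enumerate}
\item if $e_J(i)=1$, the structure at $i$ is exactly that of $\Lambda_+$;
\item if $e_J(i)=0$, it is exactly that of $\Lambda_-$.
\end{enumerate}
Each Stasheff identity of arity $N\in\{3,5,7\}$ is an equality of functions on $I$. At every $i$ it reduces to the corresponding identity for $\Lambda_{\pm}$, which holds by Lemma~\ref{lem_square_zero_detector}. For example, the $N=5$ term is $e_J(i)$ times the $N=5$ expression for $\Lambda_+$. Here we use $e_J(i)^2=e_J(i)$ only implicitly: the $N=7$ terms vanish anyway because $E^2=0$ and $m_4^+$ vanishes on degree $-2$ inputs. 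Strict unitality is likewise checked pointwise. The unit is the constant function $1_\alpha$, and $m_4^+$ vanishes on identity inputs.

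\textbf{Main obstacle.} The delicate point is that morphisms of $\Lambda_-(I)$ are $R$-linear combinations inside $\mathcal{H}om_{\mathcal C}$, not just basis maps $u_{\alpha,\beta}$. So $m_4^+(f_4(i),\dots,f_1(i))$ must mean the multilinear extension applied to $f_k(i)=r_k u$ plus $E$-components, which is exactly what clause~(3) of the definition of $\Lambda_+$ provides. Once this is granted, the pointwise reduction is routine. I would also note that only strictly increasing chains $\alpha_0<\cdots<\alpha_4$ give nonzero values; this is inherited from $m_4^+$ and needs no separate argument.
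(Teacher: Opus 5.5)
Your proposal is correct and follows the paper's proof: define $m_4$ pointwise via $e_J$, observe that the output lies in $\mathsf L(A(\alpha_0))$, and verify each Stasheff and unit identity by evaluating at each $i\in I$, where the structure is literally that of $\Lambda_+$ or $\Lambda_-$, so Lemma~\ref{lem_square_zero_detector} applies. You also write out the membership check that the paper dismisses as ``by construction''; in fact the output vanishes off $J$, so the sets $S_k$ are not even needed there.
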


\begin{proof}
    By construction, $m_4(f_4,f_3,f_2,f_1) \in \mathsf L(E(\alpha_0, \alpha_4)) = \mathsf L(A(\alpha_0))$.

The Stasheff identities are checked pointwise on $i \in I$, so they hold by Lemma~\ref{lem_square_zero_detector}.
\end{proof}

\begin{lemma}\label{lem_mixed_function_core}
There are triangle equivalences which are natural in admissible subset $U \subseteq I$ and $|J|<\theta$, denoted by $\Psi_{J,U}: \operatorname{Perf}(\Lambda_-(U))\xrightarrow{\sim}\operatorname{Perf}(\Lambda_J(U))$.
\end{lemma}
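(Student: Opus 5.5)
The plan is to run the argument of Subsection~\ref{subsec_core_descent} again, now with function coefficients, and to use the extra idempotent $e_J$ to make the bookkeeping uniform in $U$. Here $\Lambda_J(U)$ denotes the restriction of coefficients of $\Lambda_J$ to $U$; its $m_4$ is $e_J|_U\cdot m_4^+$, computed pointwise.

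\emph{Step 1: a pointwise primitive on finite $\Gamma$.} For each nonempty finite $\Gamma\subset\omega_3$, the cochain $h_\Gamma$ of Lemma~\ref{lem_finite_ordinal_acyclicity} satisfies $\delta h_\Gamma=m_4^+|_\Gamma$. Define the cochain $h_{J,\Gamma}$ with values in $\mathsf L(E)$ by
\[
h_{J,\Gamma}(f_3,f_2,f_1)(i)=e_J(i)\,h_\Gamma\bigl(f_3(i),f_2(i),f_1(i)\bigr).
\]
Three points need checking.
\begin{enumerate}
\item The values lie in $\mathsf L$. This holds because $e_J$ takes only finitely many values and vanishes outside $J$, a set with $|J|<\theta$. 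The inputs are themselves almost constant, so the outputs are almost constant as well.
\item $h_{J,\Gamma}$ is $\mathsf L_U(R)$-multilinear, since it is defined pointwise.
\item $\delta h_{J,\Gamma}=m_4^{\Lambda_J}|_\Gamma$. This holds pointwise in $i$.
\end{enumerate}
The $A_\infty$-functor computation (the cases $N=2,4,6$) can then be repeated verbatim, pointwise in $i$, using $E^2=0$. This produces $f_{J,\Gamma}:\Lambda_J(U)|_\Gamma\to\Lambda_-(U)|_\Gamma$ with $(f_{J,\Gamma})_1=\mathrm{id}$ and $(f_{J,\Gamma})_3=h_{J,\Gamma}|_U$.

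\emph{Step 2: equivalence on finite stages.} As in Lemma~\ref{lem_finite_twisted_equivalence}, the functor is the identity on one-term objects and on their graded Hom groups $\mathsf L_U(\mathcal{H}om_{\mathcal B}(\alpha,\beta)^d)$. By dévissage through shifts and cones (five lemma), $H^0\operatorname{Tw}(f_{J,\Gamma})$ is an equivalence. The inverse direction gives the functor $\Psi_{J,U}$ on finite stages.

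\emph{Step 3: gluing over $\Gamma$.} The coherence data come from the same formulas, with $k_{J,\Gamma,\Gamma'}$ and $l$ defined as $e_J$ times the pointwise cochains of Lemma~\ref{lem_twisted_overlap_coherence}. They yield natural isomorphisms $\vartheta$ satisfying the cocycle condition. The argument of Proposition~\ref{prop_marked_core_descent} then glues the finite stages into $H^0\operatorname{Tw}(\Lambda_-(U))\simeq H^0\operatorname{Tw}(\Lambda_J(U))$, and idempotent completion gives $\Psi_{J,U}$.

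\emph{Step 4: naturality.} Because every cochain is defined pointwise on $I$ and only then restricted to $U$, restriction along an inclusion $U'\subseteq U$ commutes strictly with $f_{J,\Gamma}$, with $\vartheta$, and with the gluing. So $\Psi$ is natural in $U$.

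For naturality in $J$ with $J\subseteq J'$, the key observation is $e_J=e_{J'}e_J$. The comparison $\Lambda_J\leftrightarrow\Lambda_{J'}$ is built by the same construction with the coefficient $e_{J'}-e_J$, and the pointwise choices make the composites agree on the nose.

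The main obstacle is the uniformity of the choices. The cochains $h_\Gamma$, $k$ and $l$ must be chosen once, independently of $i$, $U$ and $J$, so that all the formulas stay multiplicative in the coefficient. Because the same $h_\Gamma$ is used at every coordinate $i$, each cochain is simply $e_J$ times a fixed $R$-cochain. The one thing to verify is that the twisted-complex computations of Lemma~\ref{lem_twisted_overlap_coherence} only ever multiply coefficients pointwise, and this holds because the composition in $\Lambda_-(U)$ is pointwise.
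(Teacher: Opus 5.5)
Your argument is correct, but it is organised differently from the paper's. The paper does not weight any cochain by $e_J$. Instead it splits $U=U_+\amalg U_-$ along the support $U_+=U\cap J\cap\operatorname{Sign}^{-1}(+)$ of $e_J$; both pieces are admissible because $U$ is and $|U_+|\le|J|<\theta$. It then uses the central idempotents $1_{U_\pm}$, as in Lemma~\ref{lem_function_core_splitting}, to split both $\operatorname{Perf}(\Lambda_-(U))$ and $\operatorname{Perf}(\Lambda_J(U))$ into two factors. On the $U_-$ factor it takes the identity, since $\Lambda_J(U_-)=\Lambda_-(U_-)$ on the nose. On the $U_+$ factor, where $m_4=m_4^+$ pointwise, it asserts the equivalence $\operatorname{Perf}(\Lambda_-(U_+))\simeq\operatorname{Perf}(\Lambda_J(U_+))$ as clear. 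Since $U_+$ is infinite whenever $J$ contains infinitely many $+$-indices, that middle arrow is really Proposition~\ref{prop_marked_core_descent} with coefficients in $\mathsf L_{U_+}(R)$. That is precisely your Steps~1--3 with $e_J\equiv 1$.

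The paper's route is shorter and makes it transparent that nothing happens off the support of $e_J$. Your $\mathsf L$-membership check also becomes vacuous there, because every finite-image function on $U_+$ lies in $\mathsf L_{U_+}$. Your route, in turn, supplies the coefficient version of Subsection~\ref{subsec_core_descent} that the paper leaves implicit. It gives strict compatibility with restriction along $U'\subseteq U$. At a single coordinate $k\in J$ with $\operatorname{Sign}(k)=+$, it reduces literally to (a quasi-inverse of) the equivalence of Proposition~\ref{prop_marked_core_descent}; that is exactly the compatibility needed for $\Phi_G\Phi_{F,G}\cong\Phi_F$ in Proposition~\ref{prop_small_stage_inclusion}.

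A minor simplification: taking $(f)_3=-h_{J,\Gamma}$ gives an $A_\infty$-functor $\Lambda_-(U)|_\Gamma\to\Lambda_J(U)|_\Gamma$ directly, since $0-e_Jm_4^+=\delta(-e_Jh_\Gamma)$. The extra terms $m_4^{\Lambda_J}(\ldots,f_3,\ldots)$ vanish because $m_4^+$ kills degree $-2$ inputs. This way you need not pass to quasi-inverses before gluing.
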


\begin{proof}
    Take $U_+=U\cap J\cap\operatorname{Sign}^{-1}(+)$ and $U_-=U\setminus U_+$. 
    Both parts are admissible. clearly
    \[
    \operatorname{Perf}(\Lambda_-(U))
    \xrightarrow{\sim}
    \operatorname{Perf}(\Lambda_-(U_+)) \times \operatorname{Perf}(\Lambda_-(U_-))
    \xrightarrow{\sim}
    \operatorname{Perf}(\Lambda_J(U_+)) \times \operatorname{Perf}(\Lambda_J(U_-)) 
    \xrightarrow{\sim}
    \operatorname{Perf}(\Lambda_J(U)) 
    \]
    are composite of isomorphisms natural in both arguments.
\end{proof}

The derived category $D_\infty(\Lambda_J)$ admits an $R$-linear dg enhancement \cite[\S13.35]{BLMPretriangulated} and \cite[Theorem~1.6.2]{Drinfeld}.
We then construct a fully faithful triangle inclusion $\mathcal T_J \hookrightarrow D_\infty(\Lambda_J)$.

\begin{proposition}\label{prop_small_stage_inclusion}
For $J\subseteq I$ ($|J|<\theta$), there is an
$R$-linear fully faithful triangle functor
\[
\Phi_J:\mathcal T_J\longrightarrow D_\infty(\Lambda_J).
\]
\end{proposition}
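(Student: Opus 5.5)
We build $\Phi_J$ by hand on the large branches, on the perfect core, and on finite stages. First reduce to finite stages: since $\mathcal T_J=\varinjlim_{K\subseteq J,\,|K|<\aleph_0}\mathcal T_K$ with fully faithful transition functors $\Phi_{K,L}$ (Proposition~\ref{prop_finite_branch_transitions}), it suffices to construct fully faithful triangle functors $\Phi_{J,K}':\mathcal T_K\to D_\infty(\Lambda_J)$ for finite $K\subseteq J$, compatible up to coherent isomorphism with the $\Phi_{K,L}$. The colimit of fully faithful triangle functors into a fixed triangulated category is again fully faithful and exact, because every morphism and every triangle of $\mathcal T_J$ lives in some finite stage.

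\textbf{Step 1: decompose $\Lambda_J$.} For finite $K\subseteq J$, partition $I=\coprod_{k\in K}\{k\}\amalg(I\setminus K)$. Each part is admissible, and the idempotents $1_{\{k\}},1_{I\setminus K}\in\mathsf L(R)$ are central in $\Lambda_J$ and annihilate $m_4$ across parts, since $m_4$ is computed pointwise. Hence $\Lambda_J\simeq\prod_{k\in K}\Lambda_J(\{k\})\times\Lambda_J(I\setminus K)$, and correspondingly
\[
D_\infty(\Lambda_J)\simeq\prod_{k\in K}D_\infty(\Lambda_J(\{k\}))\times D_\infty(\Lambda_J(I\setminus K)),
\]
exactly as in Lemma~\ref{lem_function_core_splitting} but now for all modules. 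By construction $\Lambda_J(\{k\})=\Lambda_{\operatorname{Sign}(k)}$, since $e_J(k)=1$ iff $\operatorname{Sign}(k)=+$ for $k\in J$.

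\textbf{Step 2: define the components.} On the factor $\mathcal V_{\operatorname{Sign}(k)}$, use the inclusion $\mathcal V_b\subseteq D_\infty(\Lambda_b)$ from Lemma~\ref{lem_large_matrix_realization}. On $\operatorname{Perf}(\Lambda_-(I\setminus K))$, compose $\Psi_{J,I\setminus K}$ of Lemma~\ref{lem_mixed_function_core} with the Yoneda inclusion $\operatorname{Perf}(\Lambda_J(I\setminus K))\hookrightarrow D_\infty(\Lambda_J(I\setminus K))$. Each component is fully faithful and exact, so the product functor $\Phi'_{J,K}$ is as well. $R$-linearity holds because all functors involved are induced by $R$-linear $A_\infty$-functors or by restriction of coefficients.

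\textbf{Step 3: compatibility (main obstacle).} We must show $\Phi'_{J,L}\circ\Phi_{K,L}\cong\Phi'_{J,K}$ coherently. Under the splitting of Step 1, this reduces to the $k$-th coordinate for $k\in L\setminus K$. There we need the square
\[
\operatorname{ev}_k:\operatorname{Perf}(\Lambda_-(I\setminus K))\to\operatorname{Perf}(\Lambda_-)\xrightarrow{\ \iota_{\operatorname{Sign}(k)}\ }\mathcal V_{\operatorname{Sign}(k)}
\]
to agree with $\Psi_{J,\{k\}}\circ\operatorname{ev}_k$ followed by the Yoneda inclusion. For $\operatorname{Sign}(k)=-$ this is immediate. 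For $\operatorname{Sign}(k)=+$, it amounts to the identification $\operatorname{Perf}(\Lambda_-)\simeq\operatorname{Perf}(\Lambda_+)$ used to define $\iota_+$ agreeing with $\Psi_{J,\{k\}}$. We arrange this by choosing $\Psi_{J,\{k\}}$ to be the equivalence $F$ of Proposition~\ref{prop_marked_core_descent}, using the naturality in $U$ asserted in Lemma~\ref{lem_mixed_function_core}.

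The coherence of the resulting isomorphisms over triples $K\subseteq L\subseteq M$ then follows from the cocycle identity for $\vartheta$ in Lemma~\ref{lem_twisted_overlap_coherence}. Alternatively, after strictifying as in Proposition~\ref{prop_finite_branch_transitions}, we take the union model so that all maps become inclusions. Passing to the colimit gives $\Phi_J$. Since $D_\infty(\Lambda_J)$ is dg enhanced, its full thick subcategory $\Phi_J(\mathcal T_J)$ is algebraic.
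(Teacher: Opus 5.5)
Your proposal is correct and follows the paper's proof. The paper likewise builds a fully faithful functor on each finite stage $F\subseteq J$ from the inclusions $\mathcal V_{\operatorname{Sign}(i)}\hookrightarrow D_\infty(\Lambda_J(\{i\}))$ and from $\Psi_{J,U}$, transports them to $D_\infty(\Lambda_J)$ by the restriction functors $(r_U)^*$ and a direct sum (this is just the quasi-inverse of your product splitting of $D_\infty(\Lambda_J)$), and then passes to the colimit; your Step~3, which matches $\Psi_{J,\{k\}}$ with the fixed identification $\operatorname{Perf}(\Lambda_-)\simeq\operatorname{Perf}(\Lambda_+)$ used in $\iota_+$, makes explicit the compatibility $\Phi_G\Phi_{F,G}\cong\Phi_F$ that the paper calls straightforward.
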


\begin{proof}
For admissible $U\subseteq I$, take the restriction $A_\infty$ functor $r_U:\Lambda_J\to\Lambda_J(U)$ which is identical on objects, and whose components are
\[
(r_U)_1:\mathsf L\bigl(\mathcal{H}om_{\mathcal B}(\alpha,\beta)^d\bigr)
\longrightarrow
\mathsf L_U\bigl(\mathcal{H}om_{\mathcal B}(\alpha,\beta)^d\bigr),
\quad f\longmapsto f|_U; \qquad (r_U)_{\geq 2} = 0.
\]
Note that the Stasheff identities are pointwise checked, in particular
\[
\bigl(m_4^{\Lambda_J}(f_4,\ldots,f_1)|_U\bigr)(u)
=e_J(u)m_4^+(f_4(u),\ldots,f_1(u))=m_4^{\Lambda_J(U)}(f_4|_U,\ldots,f_1|_U)(u).
\]

We define $\Phi_F : \mathcal T_F \to D_\infty (\Lambda_J)$ for finite subset $F \subseteq J$ first.
For $U = I \setminus F$, one has
\[
\mathcal T_F=
\left(\prod_{i\in F}\mathcal V_{\operatorname{Sign}(i)}\right)
\times\operatorname{Perf}(\Lambda_-(U)).
\]
\begin{enumerate}
    \item For each $i \in F \subseteq J$, take $\mathcal V_{\operatorname{Sign}(i)}\hookrightarrow D_\infty(\Lambda_{\operatorname{Sign}(i)}) = D_\infty(\Lambda_J(\{i\}))\xrightarrow{(r_{\{i\}})^*}D_\infty(\Lambda_J)$.
    \item For $\operatorname{Perf}(\Lambda_-(U))$, take $\operatorname{Perf}(\Lambda_-(U))\xrightarrow[\sim]{\Psi_{J,U}}\operatorname{Perf}(\Lambda_J(U))\hookrightarrow D_\infty(\Lambda_J(U))\xrightarrow{(r_U)^*}D_\infty(\Lambda_J)$.
\end{enumerate}
The fully faithful functors above yield another full faithful triangle functor
\[
\Phi_F:\mathcal T_F\longrightarrow D_\infty(\Lambda_J), \qquad
\bigl((X_i)_{i\in F},P\bigr) \longmapsto
\left(\bigoplus_{i\in F}r_{\{i\}}^*X_i\right)
\oplus (r_U)^*\Psi_{J,U}(P).
\]

Let $F\subseteq G\subseteq J$ be finite, then $I\setminus F=(G\setminus F)\amalg(I\setminus G)$ and thus yields a natural isomorphism $\operatorname{Perf}(\Lambda_-(I \setminus F)) \cong \operatorname{Perf}(\Lambda_-(G \setminus F)) \times \operatorname{Perf}(\Lambda_-(I \setminus G))$.
It is straightforward to check $\Phi_G\Phi_{F,G}\cong\Phi_F$.
These $\Phi_F$'s therefore induce a triangle functor
\[
\Phi_J:\mathcal T_J
=\varinjlim_{F\subseteq J,\ |F| < \aleph_0}
\mathcal T_F
\longrightarrow D_\infty(\Lambda_J).
\]
Any two objects lie in a common finite stage, where the functor is fully faithful. 
Hence $\Phi_J$ is fully faithful.
\end{proof}

\subsection{Almost constancy of characteristic values}\label{subsec_finite_large_characteristic}

We construct a map $\chi:
H^4_{\mathrm{cat}}(\mathcal F,\mathbb E|_{\mathcal F})\to\operatorname{Ext}_{\mathcal C}^4(\underline R,A)$ from the category cohomology to $\operatorname{Ext}^4_{\mathcal C}(\underline R, A)$, and show the map $\chi((\operatorname{ev}_{(\cdot)})_\ast z) : I\to \operatorname{Ext}_{\mathcal C}^4(\underline R,A)$ is constant except for a subset $S \subseteq I$ with cardinality $|S| < |I|$.

Let $\mathcal F$ be a skeleton  consisting of finite direct sums of representable functors ($y_\beta$'s).
Let $\mathcal P$ be an arbitrary fixed small full additive category of projective right $\mathcal C$-modules containing $\mathcal F$ and every $P_p$ in the resolution of $\underline{R}$.
Define the bifunctor
\[
\mathbb E:\mathrm{Mod}_{\mathcal C}^{\mathrm{op}}\times\mathrm{Mod}_{\mathcal C}
\longrightarrow\mathrm{Mod}_R,
\qquad
(P,Q)\longmapsto
\operatorname{Hom}_{\mathcal C}(P,Q\otimes_{\mathcal C}E),
\qquad \mathbb E(y_\alpha,y_\beta)\cong E(\alpha,\beta).
\]

\begin{lemma}\label{lem_category_cohomology_ext}
Let $\mathcal S$ be an arbitrary small category of projective right $\mathcal C$-modules (e.g. $\mathcal F$ or $\mathcal P$).
Take the inclusion $I_{\mathcal S} : \mathcal S \to \mathrm{Mod}_\mathcal C$.
Then
\[
H_{\mathrm{cat}}^\bullet\!\left(\mathcal S,\mathbb E|_{\mathcal S}\right)
\cong\operatorname{Ext}_{\operatorname{Fun}(\mathcal S,\mathrm{Mod}_\mathcal C)}^\bullet
(I_{\mathcal S},I_{\mathcal S}\otimes_{\mathcal C}E).
\]
\end{lemma}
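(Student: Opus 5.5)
This is the standard comparison between Baues--Wirsching/Mac Lane category cohomology and Ext in a functor category; I would prove it through the bar resolution. Write $\mathcal M=\operatorname{Fun}(\mathcal S,\mathrm{Mod}_{\mathcal C})$. For $P\in\mathcal S$ and a right $\mathcal C$-module $M$, let $M\otimes\mathcal S(P,-)$ denote the functor $S\mapsto\coprod_{\mathcal S(P,S)}M$ (no additivity is imposed, so we use the set-indexed copower). By the Yoneda lemma,
\[
\operatorname{Hom}_{\mathcal M}\bigl(M\otimes\mathcal S(P,-),G\bigr)\cong\operatorname{Hom}_{\mathcal C}(M,G(P)).
\]
In particular, if $M=P'$ is projective in $\mathrm{Mod}_{\mathcal C}$, then $P'\otimes\mathcal S(P,-)$ is projective in $\mathcal M$: evaluation at $P$ is exact, and $\operatorname{Hom}_{\mathcal C}(P',-)$ is exact.

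Next I build the bar resolution of $I_{\mathcal S}$ in $\mathcal M$:
\[
B_n=\coprod_{P_n\xrightarrow{f_n}\cdots\xrightarrow{f_1}P_0}P_n\otimes\mathcal S(P_0,-),
\]
with the usual simplicial face maps. The augmentation $B_0\to I_{\mathcal S}$ sends the summand indexed by $P_0$ and $g\colon P_0\to S$ to $S$ via $g$. Each $B_n$ is projective because every object of $\mathcal S$ is a projective $\mathcal C$-module.

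Exactness should be checked objectwise at each $S$. There the complex is the bar construction of the comma category $\mathcal S/S$, tensored with the object values. The terminal object $(S,1_S)$ gives the contracting homotopy: it appends $g$ as an extra morphism. This is the same extra-degeneracy trick used in Lemma~\ref{lem_tail_simplex_exactness}, and I expect it to be routine.

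Applying $\operatorname{Hom}_{\mathcal M}(-,I_{\mathcal S}\otimes_{\mathcal C}E)$ and using the Yoneda formula gives
\[
\operatorname{Hom}_{\mathcal M}(B_n,\,I_{\mathcal S}\otimes_{\mathcal C}E)\cong\prod_{P_n\to\cdots\to P_0}\operatorname{Hom}_{\mathcal C}(P_n,P_0\otimes_{\mathcal C}E)=C^n(\mathcal S;\mathbb E|_{\mathcal S}).
\]
The one step requiring genuine care, and what I regard as the main obstacle, is verifying that the induced differential matches $\delta$ of Definition~\ref{def_category_cohomology}, including the signs. Under the identification, the face $d_0$ composes with $f_1$ on the covariant side and yields $\mathbb E(1,f_1)$. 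The face $d_{n+1}$ restricts along $f_{n+1}$ and yields $\mathbb E(f_{n+1},1)$ with sign $(-1)^{n+1}$. The inner faces compose adjacent morphisms, matching $c(\dots,f_j\circ f_{j+1},\dots)$ with sign $(-1)^j$. Degree zero is handled separately and matches the given formula. Taking cohomology then gives the claimed isomorphism, naturally in $\mathcal S$.

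A minor size point remains: $\mathcal S$ is small, so all the coproducts involved exist.
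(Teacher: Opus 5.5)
Your proposal is correct and takes essentially the same route as the paper. The paper also resolves $I_{\mathcal S}$ by the bar resolution, with terms $\coprod X_0^{\coprod_{\operatorname{Hom}_{\mathcal S}(X_n,-)}}$; this is your $P_n\otimes\mathcal S(P_0,-)$ with the indexing reversed. It then identifies $\operatorname{Hom}$ into $I_{\mathcal S}\otimes_{\mathcal C}E$ with $C^n(\mathcal S;\mathbb E|_{\mathcal S})$ through the copower--evaluation adjunction, as you do. Your explicit checks go further than the paper writes out: you verify projectivity of the terms, objectwise contractibility via the terminal object $(S,1_S)$ of $\mathcal S/S$, and that the face maps match $\delta$, all of which the paper leaves implicit.
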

\begin{proof}
The projective resolution of $I_\mathcal S$ in the functor category $\mathcal A = \operatorname{Fun}(\mathcal S, \mathrm{Mod}_{\mathcal C})$ is
\[
\cdots\longrightarrow{}
\coprod_{X_0\to X_1\to X_2}
X_0^{\coprod_{\operatorname{Hom}_{\mathcal S}(X_2,-)}}
\longrightarrow
\coprod_{X_0\to X_1}
X_0^{\coprod_{\operatorname{Hom}_{\mathcal S}(X_1,-)}}
\longrightarrow{}
\coprod_{X_0\in\mathcal S}
X_0^{\coprod_{\operatorname{Hom}_{\mathcal S}(X_0,-)}}
\longrightarrow I_{\mathcal S}\longrightarrow0.
\]
Applying $\operatorname{Hom}_{\mathcal A} (-,I_{\mathcal S}\otimes_{\mathcal C}E)$ gives a cochain.
The adjunction of universal coproduct and evaluation, $\left(P\longmapsto P^{\coprod_{\operatorname{Hom}_{\mathcal S}(Q,-)}}\right)\dashv\left(H\longmapsto H(Q)\right)$, gives the natural isomorphisms
\[
\begin{aligned}
&\operatorname{Hom}_{\mathcal A}\!\left(
\coprod_{X_0\to\cdots\to X_n}
X_0^{\coprod_{\operatorname{Hom}_{\mathcal S}(X_n,-)}},
I_{\mathcal S}\otimes_{\mathcal C}E\right) 
\cong \prod_{X_0\to\cdots\to X_n}\operatorname{Hom}_{\mathcal A}\!\left(
X_0^{\coprod_{\operatorname{Hom}_{\mathcal S}(X_n,-)}},
I_{\mathcal S}\otimes_{\mathcal C}E\right) \\
\cong{}&
\prod_{X_0\to\cdots\to X_n}
\operatorname{Hom}_{\mathcal C}
(X_0,X_n\otimes_{\mathcal C}E) = 
C^n_{\mathrm{cat}}\!\left(
\mathcal S,\mathbb E|_{\mathcal S}\right) \qquad (n\geq 0).
\end{aligned}
\]
Taking fourth cohomology yields the desired isomorphism.
\end{proof}

\begin{lemma}\label{lem_finite_free_restriction}
One has natural isomorphism $H_{\mathrm{cat}}^4\!\left(\mathcal P,\mathbb E|_{\mathcal P}\right) \cong H_{\mathrm{cat}}^4(\mathcal F,\mathbb E|_{\mathcal F})$.
\end{lemma}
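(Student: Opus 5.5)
The restriction map $H_{\mathrm{cat}}^4(\mathcal P,\mathbb E|_{\mathcal P})\to H_{\mathrm{cat}}^4(\mathcal F,\mathbb E|_{\mathcal F})$ is induced by the inclusion $j:\mathcal F\hookrightarrow\mathcal P$, and we intend to show it is an isomorphism. We will work through the Ext description of Lemma~\ref{lem_category_cohomology_ext} and reduce to a comparison of functor categories.

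The first step is to translate the question. Lemma~\ref{lem_category_cohomology_ext} applies to both $\mathcal S=\mathcal F$ and $\mathcal S=\mathcal P$, so both sides become Ext groups in $\operatorname{Fun}(\mathcal S,\mathrm{Mod}_{\mathcal C})$. Restriction along $j$ gives an exact functor
\[
j^*:\operatorname{Fun}(\mathcal P,\mathrm{Mod}_{\mathcal C})\to\operatorname{Fun}(\mathcal F,\mathrm{Mod}_{\mathcal C}),
\]
which sends $I_{\mathcal P}\mapsto I_{\mathcal F}$ and $I_{\mathcal P}\otimes_{\mathcal C}E\mapsto I_{\mathcal F}\otimes_{\mathcal C}E$. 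We check on the bar-type resolutions that the induced map on Ext agrees with cochain restriction $C^\bullet(\mathcal P)\to C^\bullet(\mathcal F)$. The left adjoint $j_!$ is left Kan extension. It sends the free object $X_0^{\coprod\operatorname{Hom}_{\mathcal F}(X_n,-)}$ to $X_0^{\coprod\operatorname{Hom}_{\mathcal P}(X_n,-)}$, so $j_!$ preserves projectives.

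The second step is to identify $j_!$ on the two relevant objects:
\[
j_!I_{\mathcal F}\cong I_{\mathcal P},\qquad
j_!\bigl(I_{\mathcal F}\otimes_{\mathcal C}E\bigr)\cong I_{\mathcal P}\otimes_{\mathcal C}E.
\]
Both are functors $\mathcal P\to\mathrm{Mod}_{\mathcal C}$ that preserve filtered colimits and coproducts. Every object of $\mathcal P$ is a projective $\mathcal C$-module, hence a retract of a coproduct of representables, and so is a filtered colimit of objects of $\mathcal F$. Both coend formulas for $j_!$ then reduce to this density statement: $j_!G(P)=\int^{F\in\mathcal F}\operatorname{Hom}(F,P)\otimes G(F)$, which equals $\varinjlim G(F)$ over the comma category $\mathcal F\downarrow P$. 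This comma category is filtered, since $\mathcal F$ has finite sums and the $y_\beta$ are finitely presented.

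The third step is to show $L_{>0}j_!$ vanishes on $I_{\mathcal F}$, i.e.\ that $j_!$ is exact on the relevant resolution. The comma categories are filtered and colimits over them are exact, so $j_!$ is computed pointwise by an exact filtered colimit. Then $j_!$ carries the standard resolution of $I_{\mathcal F}$ to a resolution of $I_{\mathcal P}$.

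Finally, the adjunction gives
\[
\operatorname{Ext}_{\mathcal P}(I_{\mathcal P},M)\cong\operatorname{Ext}_{\mathcal F}(I_{\mathcal F},j^*M)
\]
with $M=I_{\mathcal P}\otimes_{\mathcal C}E$. Naturality follows from the naturality of the adjunction.

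The main obstacle is the third step together with the density claim. The comma category $\mathcal F\downarrow P$ must be shown filtered for a general projective $P$, and the $\mathrm{Hom}$-sets $\operatorname{Hom}_{\mathcal C}(F,P)$ must interact correctly with the infinite coproducts defining the $P_p$. The key fact needed is that $y_\beta$ is compact, so that $\operatorname{Hom}(y_\beta,\coprod)=\coprod\operatorname{Hom}$. If filteredness fails, the fallback is to use only $\mathcal F$-flatness of $\operatorname{Hom}_{\mathcal C}(j-,P)$ (Lazard's theorem) and compute $\operatorname{Tor}$ directly.
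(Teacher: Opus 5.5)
Your proposal is correct and follows the paper's proof. Like the paper, you identify both sides with Ext via Lemma~\ref{lem_category_cohomology_ext}, show that left Kan extension along $\mathcal F\subseteq\mathcal P$ is exact (pointwise filtered colimits), preserves projectives and sends $I_{\mathcal F}$ to $I_{\mathcal P}$, and then conclude by adjunction. The extra isomorphism $j_!(I_{\mathcal F}\otimes_{\mathcal C}E)\cong I_{\mathcal P}\otimes_{\mathcal C}E$ is not needed, since only $j^*$ is applied to the second argument. The filteredness you flag as the main obstacle, which the paper simply asserts, follows from your own observation: write $P$ as a retract of a free module $L$, factor $F\to P\to L$ through a finite subsum $F'\subseteq L$ using compactness of the $y_\beta$, and note that $F'\to L$ is monic, so $F\to F'$ coequalizes any parallel pair that $F\to P$ does.
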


\begin{proof}
    Let $Q_\bullet \to I_{\mathcal F}$ be the projective resolution of $I_{\mathcal F}$ in $\operatorname{Fun}(\mathcal F, \mathrm{Mod}_{\mathcal C})$.
    Put $k : \mathcal F \subseteq \mathcal P$.
    Then for each $X \in \mathcal P$, the left Kan extension gives
\[
(\operatorname{Lan}_k I_{\mathcal F})(X)
=\varinjlim_{(F\to X)\in\mathcal F/X}F\cong X, \qquad \text{i.e., $\operatorname{Lan}_kI_{\mathcal F}\cong I_{\mathcal P}$}.
\]
Here for $X \in \mathcal F$, the colimit is filtered, thus is exact.
Since $\operatorname{Lan}_k$ is exact and has an exact right adjoint, it preserves projective resolutions.
Hence $\operatorname{Lan}_kQ_\bullet$ is a projective resolution of $I_{\mathcal P}$.
Along with Lemma~\ref{lem_category_cohomology_ext}, one has natural isomorphisms
\begin{align*}
    &H^4_{\mathrm{cat}}\!\left(\mathcal P,
\mathbb E|_{\mathcal P}\right)\cong
\operatorname{Ext}^4_{\operatorname{Fun}(\mathcal P,\mathrm{Mod}_{\mathcal C})}
(I_{\mathcal P},I_{\mathcal P}\otimes_{\mathcal C}E)= H^4 \operatorname{Hom}_{\operatorname{Fun}(\mathcal P,\mathrm{Mod}_{\mathcal C})}
(\operatorname{Lan}_k Q_\bullet,I_{\mathcal P}\otimes_{\mathcal C}E)\\
\cong {}&H^4 \operatorname{Hom}_{\operatorname{Fun}(\mathcal F,\mathrm{Mod}_{\mathcal C})}
(Q_\bullet,I_{\mathcal F}\otimes_{\mathcal C}E)
\cong
\operatorname{Ext}^4_{\operatorname{Fun}(\mathcal F,\mathrm{Mod}_{\mathcal C})}
(I_{\mathcal F},I_{\mathcal F}\otimes_{\mathcal C}E)
\cong H^4_{\mathrm{cat}}(\mathcal F,\mathbb E|_{\mathcal F}).\qedhere
\end{align*}
\end{proof}

Let $\omega_3$ be the ordinary ordinal category, viewed as a
subcategory of $\mathcal F$ via $\beta\mapsto y_\beta$.

\begin{lemma}
    There are natural isomorphisms $H^n_{\mathrm{cat}}(\omega_3,E|_{\omega_3}) \cong  \operatorname{Ext}^n_{\mathcal C}(\underline R,A)$ for $n \geq 0$.
\end{lemma}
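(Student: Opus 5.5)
We identify the cochain complex $C^\bullet(\omega_3;E|_{\omega_3})$ directly with $\operatorname{Hom}_{\mathcal C}(P_\bullet,A)$, where $P_\bullet\to\underline R$ is the simplicial resolution of Lemma~\ref{lem_tail_simplex_exactness}. Here $E|_{\omega_3}$ is the bifunctor $(\alpha,\beta)\mapsto E(\alpha,\beta)=A(\alpha)$ on the ordinal category. It is contravariant in the source via $A$ and constant in the target: postcomposition with $u_{\beta,\gamma}$ acts as the identity on $A(\alpha)$, in accordance with the rule $g\cdot x=sx$ in $\mathcal B$.

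\textbf{Step 1: unwind the cochains.} An $n$-cochain assigns to every string $\alpha_n\to\cdots\to\alpha_0$ in $\omega_3$, that is $\alpha_n\le\cdots\le\alpha_0$, an element of $E(\alpha_n,\alpha_0)=A(\alpha_n)$. Reindexing increasingly, we get
\[
C^n(\omega_3;E|_{\omega_3})\cong\prod_{\alpha_0\le\cdots\le\alpha_n}A(\alpha_0).
\]
In the coboundary of Definition~\ref{def_category_cohomology}, the term $\mathbb E(1,f_1)$ is the identity (constant covariance). The last term $\mathbb E(f_{n+1},1)$ is $A(u_{\alpha_0,\alpha_1})$. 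The middle terms delete interior vertices. So $\delta$ is the alternating face sum with the face $d_0$ twisted by $A$.

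\textbf{Step 2: compare with $\operatorname{Hom}_{\mathcal C}(\widetilde P_\bullet,A)$.} Let $\widetilde P_n=\coprod_{\alpha_0\le\cdots\le\alpha_n}y_{\alpha_0}$ be the unnormalised (degenerate-simplex) version of $P_\bullet$. By Yoneda, $\operatorname{Hom}_{\mathcal C}(\widetilde P_n,A)\cong\prod A(\alpha_0)$, and the differential induced by the alternating face maps is exactly the $\delta$ of Step 1. The face deleting $\alpha_0$ passes through $u_{\alpha_0,\alpha_1}$, which produces the factor $A(u_{\alpha_0,\alpha_1})$. Signs may differ by a global $(-1)^n$ convention, which does not affect cohomology.

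\textbf{Step 3: resolution and normalisation.} We show that $\widetilde P_\bullet\to\underline R$ is a projective resolution. Pointwise at $\beta$ it is the augmented simplicial chain complex of the nonempty totally ordered set $\{\alpha\ge\beta\}$, which is contractible by the cone on $\beta$, exactly as in Lemma~\ref{lem_tail_simplex_exactness} but including degenerate simplices. Hence
\[
H^n(C^\bullet(\omega_3;E|_{\omega_3}))\cong\operatorname{Ext}^n_{\mathcal C}(\underline R,A).
\]
Alternatively, the quotient $\widetilde P_\bullet\to P_\bullet$ by degenerate simplices is a chain homotopy equivalence (standard normalisation); on the cochain side this is the inclusion of normalised cochains. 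Naturality in $A$, and compatibility with the resolution used to define $[c]$, follow because every identification is induced by Yoneda.

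\textbf{Main obstacle.} The hard part is the bookkeeping of variances and signs in Step 2: checking that the category-cohomology convention, with $\mathbb E(1,f_1)$ acting on the target side, matches the simplicial face ordering, so that the nontrivial twist lands on the $d_0$ face. Two further points need attention. First, the category-cohomology complex is defined with nonnormalised strings including identities, so the comparison must go through $\widetilde P_\bullet$ rather than $P_\bullet$ directly. Second, since $\omega_3$ here is the plain (non-$R$-linear) category, the cochains are just set maps, which is consistent with Yoneda over $\mathcal C$ because $\mathcal C$ is the free $R$-linearisation.
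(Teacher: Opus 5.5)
Your proposal is correct and follows the paper's route: identify the category cochains on strings $\beta_0\le\cdots\le\beta_n$ with $\operatorname{Hom}_{\mathcal C}\bigl(\coprod_{\beta_0\le\cdots\le\beta_n}y_{\beta_0},A\bigr)$, then read off $\operatorname{Ext}$ from this bar-type resolution. You also work explicitly with the unnormalised resolution $\widetilde P_\bullet$ (the paper calls it $Q_\bullet$ in Lemma~\ref{lem_finite_large_characteristic}, though its proof here loosely writes $P_\bullet$), check that it is pointwise contractible, and check that the two differentials agree up to a degree-dependent sign; these are details the paper leaves implicit.
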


\begin{proof}
Induced by restriction $\omega_3 \to \mathcal F$, via $\beta \mapsto y_\beta$. 
A cochain $u \in C_{\mathrm{cat}}^n(\mathcal F, \mathbb E|_{\mathcal F})$ gives
\[
\operatorname{res}(u) : [\beta_0 \to \beta_1 \to \cdots \to \beta_n] \mapsto u[y_{\beta_0} \to y_{\beta_1} \to \cdots \to y_{\beta_n}].
\]
By construction, $(C_{\mathrm{cat}}^\bullet(\omega_3, E|_{\omega_3}) , \delta)$ and $\operatorname{Hom}_{\mathcal C}(P_\bullet ,A)$ has isomorphic terms:
\[
C^n_{\mathrm{cat}}(\omega_3,E|_{\omega_3})
=\prod_{\beta_0\leq\cdots\leq\beta_n<\omega_3}
E(\beta_0,\beta_n)
=\prod_{\beta_0\leq\cdots\leq\beta_n<\omega_3}
A(\beta_0) \cong \operatorname{Hom}_{\mathcal C}
\left(\coprod_{\beta_0\leq\cdots\leq\beta_n}y_{\beta_0},A\right).
\]
The diffential $\delta$ is constructed by bar resolution, so $(C_{\mathrm{cat}}^\bullet(\omega_3, E|_{\omega_3}) , \delta)\cong \operatorname{Hom}_{\mathcal C}(P_\bullet ,A)$.
Taking cohomology gives $H^n_{\mathrm{cat}}(\omega_3,E|_{\omega_3}) \cong  \operatorname{Ext}^n_{\mathcal C}(\underline R,A)$.
\end{proof}

\begin{definition}\label{def_characteristic_map}
Define $\chi$ as the composite $H^4_{\mathrm{cat}}(\mathcal F,\mathbb E|_{\mathcal F}) \xrightarrow{\operatorname{res}}H^4_{\mathrm{cat}}(\omega_3,E|_{\omega_3}) \xrightarrow{\sim} \operatorname{Ext}^4_{\mathcal C}(\underline R,A)$.
\end{definition}

\begin{lemma}
\label{lem_finite_large_characteristic}
For every zero-normalized cocycle $u\in Z^4_{\mathrm{cat}}\!\left( \mathcal P, \mathbb E|_{\mathcal P}
\right)$, the composite $P_4\xrightarrow{u(d_1,d_2,d_3,d_4)}
P_0\otimes_{\mathcal C}E
\xrightarrow{d_0\otimes_{\mathcal C}E}A$
represents $\chi([u|_{\mathcal F}])$ in
$\operatorname{Ext}^4_{\mathcal C}(\underline R,A)$.
\end{lemma}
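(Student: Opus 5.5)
The plan is to compare two explicit degree-four cochains on $\operatorname{Hom}_{\mathcal C}(P_\bullet,A)$. The first, $\phi$, is $\sigma\mapsto (d_0\otimes_{\mathcal C}E)\circ u(d_1,d_2,d_3,d_4)$. The second, $\psi$, is the image of $[u|_{\mathcal F}]$ under $\chi$, which evaluates $u$ on strings of representables $y_{\beta_0}\to\cdots\to y_{\beta_4}$. The goal is to show they differ by a coboundary.

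The bridge is Lemma~\ref{lem_finite_free_restriction}: the cohomology of $\mathcal P$ is computed by the resolution $\operatorname{Lan}_kQ_\bullet$ induced from $\mathcal F$. I would replace $u$ by the cohomologous cocycle obtained by transporting $[u|_{\mathcal F}]$ back to $\mathcal P$. This cocycle is determined additively by its values on finite sums of representables, since each $P_p$ is a filtered colimit of its finite subsums.

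Concretely, for a simplex summand $\sigma=[\alpha_0<\cdots<\alpha_4]$ of $P_4$, I restrict the composable string $d_1,d_2,d_3,d_4$ along the inclusion $y_{\alpha_0}\hookrightarrow P_4$. The image of $y_{\alpha_0}$ under $d_4,d_3,d_2,d_1$ lands in finite subsums $F_3\subseteq P_3,\dots,F_0\subseteq P_0$, and these are objects of $\mathcal F$. Naturality of the cocycle, via the cocycle identity $\delta u=0$ applied to the inclusions $F_p\hookrightarrow P_p$, lets me write $u(d_1,\dots,d_4)\circ\iota_\sigma$ as $u$ applied to the finite string $F_4\to\cdots\to F_0$, up to terms of the form $\delta u'$ evaluated on those inclusions. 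Zero-normalisation kills the terms that involve zero components.

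Next I compose with $d_0\otimes E$. This map restricted to $F_0\otimes E$ is the sum of the augmentations $y_\beta\otimes E\to A$. I then compare $u$ on the finite string of matrices with the sum over paths in $\omega_3$ of $u$ on strings of maps $y_{\beta_0}\to\cdots\to y_{\beta_4}$. This sum over paths is exactly the bar-resolution identification $C^\bullet_{\mathrm{cat}}(\omega_3,E|_{\omega_3})\cong\operatorname{Hom}_{\mathcal C}(P_\bullet,A)$ from the preceding lemma, applied to the single $4$-chain $\sigma$. The discrepancy consists of cross terms arising because $u$ is not additive. These have to be shown to assemble into a coboundary $v\circ d_4$ plus terms that vanish after $d_0\otimes E$, mirroring the indeterminacy computation in Proposition~\ref{prop_corrected_toda_endpoint}.

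The main obstacle is this non-additivity. Category cohomology of $\mathcal F$ with non-additive cochains differs in general from the additive (Mac Lane) version. Two routes are available. The preferred one is to first modify $u$ within its class to a cocycle that is additive in each variable on $\mathcal F$, or at least on the relevant matrices. This uses that the coefficient bifunctor $\mathbb E$ is additive and that the Ext description in Lemma~\ref{lem_category_cohomology_ext} can be computed by a resolution built from additive pieces. If that fails, the fallback is to work directly with the explicit Ext description. There one shows that both $\phi$ and $\psi$ arise from the same chain map between the standard resolution of $I_{\mathcal P}$, evaluated at the string $(d_1,\dots,d_4)$, and $P_\bullet$, using a comparison map of resolutions that is unique up to homotopy. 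Uniqueness of comparison maps then gives equality of the classes in $\operatorname{Ext}^4_{\mathcal C}(\underline R,A)$.
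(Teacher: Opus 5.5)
\textbf{There is a genuine gap.} The step you flag as the main obstacle is never resolved, and neither of your two routes closes it.

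\emph{The concrete reduction.} Applying $\delta u=0$ to strings containing the inclusions $j_p:F_p\hookrightarrow P_p$ does not produce ``terms of the form $\delta u'$''. It produces mixed terms such as $(d_0\otimes_{\mathcal C}E)\circ u(d_1,d_2,d_3,j_3)\circ g_4$, where $d_4\iota_\sigma=j_3g_4$. These depend on $\sigma$ through the chosen finite subsum, and you give no reason why they assemble into some $v\circ d_4$.

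\emph{Your preferred route.} It contradicts your own remark. Cohomology of $\mathcal F$ with arbitrary non-additive cochains genuinely differs from the additive (Hochschild--Mitchell) cohomology; it is Mac Lane cohomology that agrees with the non-additive one. So a class need not have a representative that is additive in each variable. Lemma~\ref{lem_category_cohomology_ext} computes Ext in the category of \emph{all} functors $\mathcal S\to\mathrm{Mod}_{\mathcal C}$, whose projectives are non-additive free functors, so there is no resolution ``built from additive pieces'' to appeal to. Additivity on $\mathcal F$ would also say nothing about $u$ on the infinite matrices $d_p$ in $\mathcal P$.

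\emph{The ``transport back'' step.} Lemma~\ref{lem_finite_free_restriction} only gives an isomorphism of cohomology groups via $\operatorname{Lan}_kQ_\bullet$. It does not provide an explicit cocycle on strings in $\mathcal P$. Moreover, replacing $u$ by $u+\delta w$ is harmless for the statement only if $w$ is zero-normalised.

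\emph{Your fallback and what it is missing.} The fallback points the right way but omits the construction that carries the paper's proof. You need one projective resolution of $\underline R$ that receives both $P_\bullet$ and the ordinal bar resolution $Q_\bullet$ as comparison maps over $\mathrm{id}_{\underline R}$, and on which $u$ induces a single cocycle. Since $\underline R\notin\mathcal P$, the standard resolution of $I_{\mathcal P}$ cannot simply be ``evaluated'' there. The paper proceeds as follows.
\begin{itemize}
\item It adjoins $\underline R$ to obtain $\mathcal P^+$. Using $\underline R\cong\varinjlim_\beta y_\beta$ (flatness, so the relevant comma categories are filtered and the Kan extension is exact), it shows that $\operatorname{Lan}_h$ sends projective resolutions of $I_{\mathcal P}$ to projective resolutions of $I_{\mathcal P^+}$.
\item It applies this to the zero-reduced bar resolution $B^{\mathrm{red}}_\bullet$ and evaluates at $\underline R$, giving a resolution $T_\bullet\to\underline R$.
\item The zero-normalised $u$ descends to $B^{\mathrm{red}}_\bullet$ and induces a cocycle $\widetilde u:T_4\to A$ with $\widetilde u|_{(f_1,\ldots,f_4,a)}=(a\otimes_{\mathcal C}E)\circ u(f_1,\ldots,f_4)$.
\item The inclusion of the summand indexed by $P_n\xrightarrow{d_n}\cdots\xrightarrow{d_1}P_0\xrightarrow{d_0}\underline R$, with sign $(-1)^{n(n+1)/2}$, is a chain map $P_\bullet\to T_\bullet$. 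This holds precisely because every face other than the one applying $d_n$ contains a zero arrow ($d_jd_{j+1}=0$ or $d_0d_1=0$) and dies in the reduced complex.
\item The assignment $(\beta_0\le\cdots\le\beta_n)\mapsto(y_{\beta_0}\to\cdots\to y_{\beta_n}\to\underline R)$ gives the comparison map $Q_\bullet\to T_\bullet$.
\end{itemize}
Uniqueness of comparison maps up to homotopy then identifies $[(d_0\otimes_{\mathcal C}E)\circ u(d_1,\ldots,d_4)]$ with $\chi([u|_{\mathcal F}])$. No additivity of $u$ and no decomposition into finite subsums is needed.
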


\begin{proof}
Let $\mathcal P^+$ be the additive subcategory generated by $\mathcal P$ and $\underline R$.
Write the inclusions $\mathcal F\xrightarrow{k}\mathcal P\xrightarrow{h}\mathcal P^+$ and $j=hk$.
For a projective resolution $Q_\bullet\to I_{\mathcal F}$, Lemma~\ref{lem_finite_free_restriction} shows that $\operatorname{Lan}_kQ_\bullet$ is a projective resolution of $I_{\mathcal P}$.
Similarly, since
\[
(\operatorname{Lan}_jI_{\mathcal F})(\underline R)
=\varinjlim_{(F\xrightarrow{a}\underline R)\in\mathcal F/\underline R}F
\cong\varinjlim_{\beta<\omega_3}y_\beta
\cong\underline R,
\]
$\operatorname{Lan}_j$ is exact and $\operatorname{Lan}_jQ_\bullet$ is a projective resolution of $I_{\mathcal P^+}$.
Since $\operatorname{Lan}_h\operatorname{Lan}_kQ_\bullet\cong\operatorname{Lan}_jQ_\bullet$, we have $\mathbf L_n\operatorname{Lan}_h(I_{\mathcal P})=0$ for $n>0$.
Thus $\operatorname{Lan}_h$ sends any projective resolution of $I_{\mathcal P}$ to one of $I_{\mathcal P^+}$; projectivity is preserved because restriction is its exact right adjoint.

Take the zero-reduced bar resolution $B^{\mathrm{red}}_\bullet\to I_{\mathcal P}$ in $\operatorname{Fun} (\mathcal P, \mathrm{Mod}_{\mathcal C})$, i.e., the quotient of usual bar complex $B_\bullet$ by its subcomplex $N_\bullet$, with terms
\[
N_n(Y)=\coprod_{\substack{X_n\to\cdots\to X_0\to Y\\\text{at least one}\\\text{arrow is zero}}}X_n, \qquad B^{\mathrm{red}}_n(Y)=
\coprod_{\substack{
X_n\xrightarrow{f_n}\cdots\xrightarrow{f_1}X_0\xrightarrow{a}Y\\
X_j\in\mathcal P,\ f_j\ne0,\ a\ne0}}
X_n.
\]
Since $H_{\geq n} N_\bullet$ is immediate,  $H_{\geq 1}B_\bullet^{\mathrm{red}}=0$ and $H_0B_\bullet^{\mathrm{red}} \cong I_{\mathcal P}$.
Each $N \to B$ is degree wise split: at degree $n$, construct the left inverse of inclusion by sending the coordinates
\[
(X_n\xrightarrow{f_n}\cdots\xrightarrow{f_1}X_0\xrightarrow aY)
\longmapsto
\begin{cases}
(X_n\xrightarrow{f_n}\cdots\xrightarrow{f_1}X_0\xrightarrow aY),&\exists j,\ f_j=0,\\
(X_n\xrightarrow{f_n}\cdots\xrightarrow{f_1}X_0\xrightarrow 0Y),&\forall j,\ f_j\ne0.
\end{cases}
\]
Hence $B_\bullet^{\mathrm{red}}$ is a well-defined projective resolution.

Note that $\operatorname{Lan}_h B_\bullet^{\mathrm{red}}$ is a projective resolution of $I_{\mathcal P^+}$.
Evaluation at $\underline R$ gives
\[
T_\bullet=(\operatorname{Lan}_hB_\bullet^{\mathrm{red}})(\underline R)\longrightarrow\underline R,
\qquad
T_n=\coprod_{\substack{X_n\xrightarrow{f_n}\cdots\xrightarrow{f_1}X_0\xrightarrow{a}\underline R\\X_j\in\mathcal P,\ f_j\ne0,\ a\ne0}}X_n.
\]

The zero-normalized cocycle $u$ defines $\widetilde u:T_4\to A$.
By construction $\widetilde u|_{(f_1,\ldots,f_4,1)} =(1\otimes_{\mathcal C}E)\circ u(f_1,f_2,f_3,f_4)$.
Naturality gives $\widetilde u|_{(f_1,\ldots,f_4,a)} =(a\otimes_{\mathcal C}E)\circ u(f_1,f_2,f_3,f_4)$.
Let $Q_n=\coprod_{\beta_0\leq\cdots\leq\beta_n}y_{\beta_0}$ be the ordinal bar resolution used to compute $H_{\mathrm{cat}}^\bullet (\omega_3, E|_{\omega_3})$.
Then one has a comparison of projective resolution
\[
\xymatrix@R=1em{
\cdots \ar[r] & Q_1 \ar[r] \ar[d]_{\psi_1}
& Q_0 \ar[r] \ar[d]_{\psi_0}
& \underline R \ar@{=}[d] \\
\cdots \ar[r] & T_1 \ar[r]
& T_0 \ar[r] & \underline R
}
\]
where $\psi_\bullet$ send index $(\beta_0\le\cdots\le\beta_n)$ to $(y_{\beta_0}\to\cdots\to y_{\beta_n} \to\underline R)$.
Fix $\sigma=(\beta_0\le\cdots\le\beta_4)$, and let
$c_j:y_{\beta_{j-1}}\to y_{\beta_j}$ and
$a_{\beta_4}:y_{\beta_4}\to\underline R$ be the canonical maps.
Write $\iota_\sigma:y_{\beta_0}\to Q_4$ for the corresponding
coproduct inclusion. Then
\[
\widetilde u\circ\psi_4\circ\iota_\sigma
=(a_{\beta_4}\otimes_{\mathcal C}E)
 \circ u(c_4,c_3,c_2,c_1).
\]
Under the canonical identifications
$y_{\beta_4}\otimes_{\mathcal C}E\cong A$
and $\underline R\otimes_{\mathcal C}E\cong A$,
the map $a_{\beta_4}\otimes_{\mathcal C}E$ is the identity.
Consequently, under the Yoneda identification
$\operatorname{Hom}_{\mathcal C}(y_{\beta_0},A)\cong A(\beta_0)$,
\[
\bigl(\widetilde u\circ\psi_4\circ\iota_\sigma\bigr)_{\beta_0}
(1_{\beta_0})
=(u|_{\omega_3})(\beta_0\le\cdots\le\beta_4).
\]
Thus $\widetilde u\circ\psi_4$ corresponds to the cochain
$u|_{\omega_3}$.

Finally we consider $\widetilde u\phi_4$ in another projective resolution $P_\bullet$ of $\underline R$, which also computes the image in $\operatorname{Ext}^4_{\mathcal C}(\underline R, A)$.
For each $n \geq 0$, let $\iota_n:P_n\to T_n$ be the canonical inclusion of the summand indexed by $P_n\xrightarrow{d_n}\cdots\xrightarrow{d_2}P_1\xrightarrow{d_1}P_0\xrightarrow{d_0}\underline R$.
Note that $\iota_\bullet$ is not a chain map: consider the summand indexed by $\cdot \xrightarrow {f_n}\cdot \cdots \cdot \xrightarrow {f_0}\cdot$, taking bar resolution gives face $\cdot \xrightarrow {f_{n-1}}\cdot \cdots \cdot \xrightarrow {f_0}\cdot$ with sign $(-1)^n$.
Hence $\phi_\bullet : P_\bullet \to T_\bullet$ is a comparison of projective resolution, where $\phi_n=(-1)^{n(n+1)/2}\iota_n$.
Since $(-1)^{4\cdot5/2}=1$, $\widetilde u\phi_4$ and $\widetilde u\psi_4$ represent the same Ext class.
Thus $\chi([u|_{\mathcal F}])=[\widetilde u\phi_4]=[(d_0\otimes_{\mathcal C}E)\circ u(d_1,d_2,d_3,d_4)]$.
\end{proof}

\begin{proposition}
\label{prop_characteristic_almost_constant}
For every
$z\in H^4_{\mathrm{cat}}(\mathcal F,\mathsf L(\mathbb E|_{\mathcal F}))$,
there is $S\subseteq I$ with $|S|<\theta$ such that
\[
(I \setminus S)\longrightarrow \operatorname{Ext}^4_{\mathcal C}(\underline R, A), \qquad i \longmapsto \chi((\operatorname{ev}_i)_*z) \qquad \text{is a constant map}.
\]
\end{proposition}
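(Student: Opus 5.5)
The idea is to work at the level of cochains, after restricting along $\omega_3\to\mathcal F$, where only $\aleph_3$ many chains occur. The hypothesis $\operatorname{cf}(\theta)>\aleph_3$ fixed in the outline then absorbs the exceptional sets coming from all these chains at once.

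First, choose a cocycle $u\in Z^4_{\mathrm{cat}}(\mathcal F,\mathsf L(\mathbb E|_{\mathcal F}))$ representing $z$. For each $i\in I$, evaluation $\operatorname{ev}_i:\mathsf L(V)\to V$ is $R$-linear and natural in $V$, so it defines a map of coefficient bifunctors $\mathsf L(\mathbb E|_{\mathcal F})\to\mathbb E|_{\mathcal F}$. Applying it entrywise gives a cochain map, so $(\operatorname{ev}_i)_*z=[\operatorname{ev}_i\circ u]$. The same holds for $\operatorname{ev}_\infty$, which is also linear and natural by Lemma~\ref{lem_function_coefficient_splitting}. Restriction $\operatorname{res}$ to $\omega_3$ only reindexes the product over chains. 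Hence it commutes with $(\operatorname{ev}_i)_*$, and by Definition~\ref{def_characteristic_map}
\[
\chi\bigl((\operatorname{ev}_i)_*z\bigr)=\Bigl[\operatorname{ev}_i\circ\bigl(u|_{\omega_3}\bigr)\Bigr]\in H^4_{\mathrm{cat}}(\omega_3,E|_{\omega_3})\cong\operatorname{Ext}^4_{\mathcal C}(\underline R,A).
\]
Here $u|_{\omega_3}$ is the family $(u_\sigma)_\sigma$ with $u_\sigma=u(y_{\beta_0}\to\cdots\to y_{\beta_4})\in\mathsf L(A(\beta_0))$, indexed by chains $\sigma=(\beta_0\le\cdots\le\beta_4)$ in $\omega_3$.

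Next, for each such $\sigma$, the definition of $\mathsf L$ gives a set $S_\sigma\subseteq I$ with $|S_\sigma|<\theta$ such that $u_\sigma$ is constant, with value $\operatorname{ev}_\infty(u_\sigma)$, on $I\setminus S_\sigma$. Put $S=\bigcup_\sigma S_\sigma$. There are at most $|\omega_3|^5=\aleph_3$ chains, and each $|S_\sigma|<\theta$. Since $\operatorname{cf}(\theta)>\aleph_3$, the supremum $\lambda=\sup_\sigma|S_\sigma|$ satisfies $\lambda<\theta$, so $|S|\le\aleph_3\cdot\lambda<\theta$, using $\theta>\aleph_3$.

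Finally, for every $i\in I\setminus S$ the cochains agree entrywise:
\[
\operatorname{ev}_i\circ(u|_{\omega_3})=\operatorname{ev}_\infty\circ(u|_{\omega_3}).
\]
Hence $\chi((\operatorname{ev}_i)_*z)=\chi((\operatorname{ev}_\infty)_*z)$ is independent of $i$ on $I\setminus S$, which is the assertion.

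The main point to watch is that one must not pick $S$ from all entries of $u$ on $\mathcal F$. The morphism sets of $\mathcal F$ are finite matrices over $R$, so the chains of $\mathcal F$ may be of cardinality $|R|+\aleph_3$, possibly $\ge\operatorname{cf}(\theta)$. That is why the argument passes to $\omega_3$, which has only $\aleph_3$ chains, before taking the union, and uses the compatibility of $\operatorname{ev}_i$ with restriction. The remaining checks, that $\operatorname{ev}_i$ commutes with the bar differential and that the isomorphism of the preceding lemma is natural in the coefficient module, are routine.
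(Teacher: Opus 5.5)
Your proposal is correct and follows essentially the same route as the paper. Both restrict a representing cocycle to the $\aleph_3$ chains of $\omega_3$, take the union of the exceptional sets $S_\sigma$, and use $\operatorname{cf}(\theta)>\aleph_3$ to get $|S|<\theta$; you additionally spell out the compatibility of $(\operatorname{ev}_i)_*$ with restriction and the comparison with $\operatorname{ev}_\infty$, which the paper leaves implicit in ``Now the result follows.''
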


\begin{proof}
Choose a cocycle $v$ representing $\operatorname{res}(z)\in H^4_{\mathrm{cat}}\!\left(\omega_3,\mathsf L(E|_{\omega_3})\right)$.
Namely, 
\[
v=(v(\beta_0\le\cdots\le\beta_4))_{\beta_0\le\cdots\le\beta_4}
\in\prod_{\beta_0\le\cdots\le\beta_4}\mathsf L(A(\beta_0)),
\qquad\delta v=0.
\]
For each $\sigma=(\beta_0\le\cdots\le\beta_4)$, the coefficient
$v(\sigma)$ belongs to $\mathsf L(A(\beta_0))$.
Hence
\[
(I \setminus S_\sigma) \longrightarrow A(\beta_0) , \qquad i \longmapsto \operatorname{ev}_i(v(\sigma))
\]
is constant for some $S_\sigma\subseteq I$, with $|S_\sigma|<\theta$.
Put $S=\bigcup_\sigma S_\sigma$.
There are only $\aleph_3$ such strings. 
Since $\operatorname{cf}(\theta) > \aleph_3$, one has $|S| < \theta$.
Now the result follows.
\end{proof}

\subsection{The topological obstruction}\label{sec_topological_obstruction}

\begin{lemma}\label{lem_topological_model_realisation}
Every small topological triangulated category admits a fully faithful triangle functor into the homotopy category of a stable topological model category.
\end{lemma}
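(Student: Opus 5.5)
The plan is to combine the definition of topological (Definition~\ref{def_topological_category}) with Dugger's theorem: every combinatorial stable model category is Quillen equivalent to a spectral model category, and hence to a simplicial (topological) one. Let $\mathcal T_0$ be small and topological, with a fully faithful triangle functor $F:\mathcal T_0\to\operatorname{Ho}(\mathcal M)$ for some stable model category $\mathcal M$. The first step is to shrink $\mathcal M$ to something combinatorial. Since $\mathcal T_0$ is small, its image is contained in a set of objects of $\operatorname{Ho}(\mathcal M)$. Choose bifibrant representatives $X$ for these objects and form the full subcategory of $\mathcal M$ on their cofibrant--fibrant replacements.

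Next, I would pass to spectral enrichment. Following Schwede--Shipley and Dugger, one can take the category of $\mathcal M$-valued frames, or equivalently Hovey's stabilization of $\mathcal M$, and obtain a stable simplicial model category with the same homotopy category. An alternative, cleaner route is to use the fact from \cite{SchwedeComparison} that a topological triangulated category embeds in the homotopy category of a \emph{spectral} model category. For this, one chooses cosimplicial frames on the chosen objects. This produces a small category $\mathcal E$ enriched in symmetric spectra, with $\pi_0\mathcal E(X,Y)\cong\operatorname{Hom}_{\operatorname{Ho}(\mathcal M)}(X,Y)$ and compatible with shifts.

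The last step is to consider the model category of spectral modules $\operatorname{Mod}\text{-}\mathcal E$ with the projective model structure. This is a stable, combinatorial, topological (spectral, hence simplicial via the geometric realisation adjunction) model category. The enriched Yoneda embedding gives a fully faithful functor $\operatorname{Ho}$-level from the objects of $\mathcal T_0$ to $\operatorname{Ho}(\operatorname{Mod}\text{-}\mathcal E)$. Since it is induced by an exact spectral functor, it commutes with cofibre sequences and suspension, so it is a triangle functor. Composing with $F$ gives the required embedding.

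The main obstacle is the framing step. An arbitrary stable model category is not simplicial, so the spectral mapping objects must be built from cosimplicial and simplicial frames. We then need to check that the triangulated structure, especially suspension isomorphisms and distinguished triangles, is preserved by the Yoneda functor. I would first try to cite Schwede's comparison results directly for this, rather than redoing the frame calculus; the smallness hypothesis is used only to ensure that $\mathcal E$ has a set of objects.
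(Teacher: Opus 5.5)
Your proposal has two gaps.

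\textbf{The ``topological'' conclusion.} This is the decisive one. The lemma asks for a stable \emph{topological} model category, that is, one enriched, tensored and cotensored over spaces, which is what Proposition~\ref{prop_topological_universal_class} feeds into Sagave's construction. Your $\operatorname{Mod}\text{-}\mathcal E$, for $\mathcal E$ enriched in symmetric spectra of simplicial sets, is only a spectral, hence simplicial, model category. The parenthetical ``spectral, hence simplicial via the geometric realisation adjunction'' does not produce a $\mathrm{Top}$-model structure. A simplicial model category is not tensored over arbitrary spaces, and realising its mapping objects does not give tensors and cotensors satisfying SM7.

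What is missing is a change of enriching category:
\begin{enumerate}
\item replace $\mathcal E$ by a cofibrant spectral category $\mathcal E^c\to\mathcal E$ on the same object set;
\item realise levelwise to get $|\mathcal E^c|$, enriched in topological symmetric spectra;
\item use the Schwede--Shipley comparison $\operatorname{Mod}\text{-}\mathcal E\simeq_Q\operatorname{Mod}\text{-}\mathcal E^c\simeq_Q\operatorname{Mod}\text{-}|\mathcal E^c|$ \cite{SSMonoidal}.
\end{enumerate}
Cofibrancy is exactly what makes realisation induce a Quillen equivalence of module categories. This is the step the paper carries out.

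\textbf{The construction of $\mathcal E$.} You flag this as the main obstacle but leave it unresolved.

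Your first route fails as stated. The full subcategory of $\mathcal M$ on chosen bifibrant objects is not a model category, so nothing combinatorial has been produced. Dugger's theorem needs a combinatorial (presentable) stable model category, so it does not apply to an arbitrary $\mathcal M$.

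The frame route gives homotopy function complexes whose composition is associative only up to homotopy, so it does not directly give a strict spectral category. It also does not show that $X\mapsto\mathcal E(-,X)$ sends distinguished triangles of $\mathcal T_0$ to distinguished triangles, which you need for a triangle functor.

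The paper avoids frames entirely:
\begin{enumerate}
\item the underlying $\infty$-category of $\mathcal M_0$ is stable \cite[Remark~2.13]{BGT};
\item the small stable subcategory containing the image of $\mathcal T$ has a spectral model $\mathcal A$ compatible with the stable Yoneda embedding \cite[Propositions~4.8--4.9]{BGT};
\item this compatibility is what makes $\mathcal T\to D(\mathcal A)$ a fully faithful triangle functor.
\end{enumerate}
Deferring to an unspecified comparison result in \cite{SchwedeComparison} does not replace this step.
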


\begin{proof}
    Let $\mathcal T$ be a small topological triangulated category. 
    Choose a fully faithful triangle functor $\mathcal T\to\operatorname{Ho}(\mathcal M_0)$ with $\mathcal M_0$ stable.
By \cite[Remark~2.13]{BGT}, the underlying $\infty$-category of $\mathcal M_0$ is stable.
By \cite[Propositions~4.8--4.9]{BGT}, the resulting small stable $\infty$-category admits a spectral model $\mathcal A$ compatible with the stable Yoneda embedding.
The spectral Yoneda embedding into right $\mathcal A$-modules then induces a fully faithful triangle functor $\mathcal T\to D(\mathcal A)$, where $D(\mathcal A)$ denotes the derived category of $\mathcal A$-modules; see \cite[\S2.1]{BGT}.

Choose a cofibrant replacement $\mathcal A^c\to\mathcal A$ in the model category of spectral categories with fixed object set $\operatorname{Ob}(\mathcal A)$. 
Let $|\mathcal A^c|$ denote its levelwise geometric realisation.
By the module comparisons in \cite[proof of Corollary~1.2, p.~331]{SSMonoidal}, there are Quillen equivalences
\[
\operatorname{Mod}\text{-}\mathcal A
\simeq_Q
\operatorname{Mod}\text{-}\mathcal A^c
\simeq_Q
\operatorname{Mod}\text{-}|\mathcal A^c|.
\]
The last category is a stable topological model category $\mathcal M$.
Passing to homotopy categories, one has a triangle equivalence $D(\mathcal A)\simeq\operatorname{Ho}(\mathcal M)$, and hence the required fully faithful triangle functor $\mathcal T\to\operatorname{Ho}(\mathcal M)$.
\end{proof}

\begin{proposition}\label{prop_topological_universal_class}
Let $\mathcal T$ be a small topological triangulated category, with a 2-split subset $\mathcal Q\subseteq\mathcal T$.
There exists a zero-normalised cocycle $u\in Z^4_{\mathrm{cat}}\!\left(
\mathcal Q,
\operatorname{Hom}_{\mathcal T}(-,-[-2])|_{\mathcal Q^{\mathrm{op}}\times\mathcal Q}
\right)$ such that for every composable string
\[
X_4\xrightarrow{f_4}X_3\xrightarrow{f_3}X_2
\xrightarrow{f_2}X_1\xrightarrow{f_1}X_0, \qquad f_j f_{j+1}=0, \quad X_j \in \mathcal Q
\]
one has $u(f_1,f_2,f_3,f_4)\in
\langle f_1,f_2,f_3,f_4\rangle
\subseteq\operatorname{Hom}_{\mathcal T}(X_4,X_0[-2])$.
\end{proposition}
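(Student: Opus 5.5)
The plan is to extract $u$ as the first $k$-invariant (Postnikov invariant) of the mapping spaces of a topological model for $\mathcal T$. The $2$-split hypothesis makes this invariant sit exactly in category cohomology of degree $4$ with coefficients $\operatorname{Hom}_{\mathcal T}(-,-[-2])$. After that, one compares it with the fourfold bracket of Definition~\ref{def_fourfold_bracket} by a direct obstruction-theoretic computation. This is the fourfold, $2$-split analogue of Sagave's universal Toda bracket.

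\emph{Step 1 (a space-enriched model).} By Lemma~\ref{lem_topological_model_realisation}, fix a fully faithful triangle functor $\mathcal T\to\operatorname{Ho}(\mathcal M)$ with $\mathcal M$ a stable topological model category. Replace each object of $\mathcal Q$ by a bifibrant representative. This gives a small category $\mathbf Q$ enriched in pointed spaces, with $\pi_0\operatorname{Map}(X,Y)=\operatorname{Hom}_{\mathcal T}(X,Y)$ and $\pi_n\operatorname{Map}(X,Y)\cong\operatorname{Hom}_{\mathcal T}(X,Y[-n])$ at every basepoint; stability makes each mapping space an infinite loop space, so all components are equivalent. By $2$-splitness, $\pi_1=0$ and $\pi_3=0$. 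Apply the Postnikov section $P_2$ functorially, using a monoidal Postnikov functor so that composition survives. The result is a category enriched in simply connected $2$-types, with $\pi_0$-category $\mathcal Q$ and $\pi_2$-bimodule $\operatorname{Hom}_{\mathcal T}(-,-[-2])$.

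\emph{Step 2 (the class).} By the Dwyer--Kan--Smith / Baues--Wirsching classification, such an enriched category is determined up to equivalence by a class in $H^4_{\mathrm{cat}}(\mathcal Q;\operatorname{Hom}_{\mathcal T}(-,-[-2]))$. Concretely, the class is constructed as follows.
\begin{enumerate}
\item For each morphism $f$, choose a point in its component, taking the basepoint (the zero map) when $f=0$.
\item For each composable pair, choose a path from the composite of the chosen points to the chosen point of the composite; use constant paths whenever an input is zero or an identity.
\item Since $\pi_1=0$, fill the resulting associativity loops over triples by $2$-cells, again chosen trivially on degenerate or zero inputs.
\item Over a quadruple, the faces assemble into a $2$-sphere; its class in $\pi_2$ is $u(f_1,f_2,f_3,f_4)$.
\end{enumerate}
The usual cube argument shows that $u$ is a cocycle. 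The basepoint conventions make it zero-normalised, because every face containing a zero morphism is constant.

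\emph{Step 3 (Toda bracket comparison).} This is the main obstacle. Take a string $X_4\to\cdots\to X_0$ in $\mathcal Q$ with $f_jf_{j+1}=0$. I would build the objects $C$ and $Y$ of Definition~\ref{def_fourfold_bracket} as iterated homotopy cofibres, using the chosen paths as nullhomotopies $f_1f_2\simeq0$, $f_2f_3\simeq0$, $f_3f_4\simeq0$. The maps $\alpha$ and $\beta$ are then extensions and lifts determined by the $2$-cells over $(f_1,f_2,f_3)$ and $(f_2,f_3,f_4)$. The composite $\beta\alpha$ is computed by gluing these data. It is represented by exactly the $2$-sphere formed from the faces over $(f_1,f_2,f_3,f_4)$ in Step 2 (the faces involving zero composites being constant). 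Hence $u(f_1,\ldots,f_4)$ is one valid choice of $\beta\alpha$, and so lies in $\langle f_1,f_2,f_3,f_4\rangle$. The difficulty is bookkeeping: matching the suspension and cone conventions of the paper with the orientation of the sphere. A wrong global sign would be harmless, since replacing $u$ by $-u$ is not allowed but the bracket coset can be checked against the sign convention fixed in Proposition~\ref{prop_branch_toda_cosets}. I would first verify the sign in the model case of a free module spectrum, where the bracket is $-m_4$, and then transport that convention to the general case.
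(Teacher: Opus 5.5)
Your proposal follows essentially the same route as the paper: after Lemma~\ref{lem_topological_model_realisation}, the paper cites Sagave's Construction~5.7, Lemma~5.8 and Proposition~6.1 for the zero-normalised cocycle, its cocycle property and the bracket comparison that you reconstruct by hand in Steps~2--3. It then transports the result back to $\mathcal T$ via full faithfulness and Remark~\ref{rem_toda_functoriality}, a step you leave implicit when you build $C$ and $Y$ as cofibres in $\mathcal M$, since the bracket in the statement is the one computed in $\mathcal T$.

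One correction: replacing $u$ by $-u$ \emph{is} allowed, because $-u$ is again a zero-normalised cocycle. That is exactly why a universal sign discrepancy is harmless, so no model-case sign check is needed.
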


\begin{proof}
Let $\mathbb F : \mathcal T \to \operatorname{Ho}(\mathcal M)$ be such a fully faithful triangle functor, where $\mathcal M$ is a stable topological model category.
Since $\mathcal Q$ is $2$-split,
\[
\operatorname{Hom}_{\operatorname{Ho}(\mathcal M)}(\mathbb FP,\mathbb FQ[2r+1])
\cong\operatorname{Hom}_{\mathcal T}(P,Q[2r+1]),
\]
Thus $\mathbb F(\mathcal Q)$ is $2$-split.
Then \cite[Construction~5.7]{Sagave} gives a zero-normalised cochain $\widetilde u\in
C^4_{\mathrm{cat}}\!\left(\mathbb F(\mathcal Q),
\operatorname{Hom}_{\operatorname{Ho}(\mathcal M)}(-,-[-2])\right)$, which assigns each chain of $4$ composable morphisms $X_4\xrightarrow{g_4}X_3\xrightarrow{g_3}X_2
\xrightarrow{g_2}X_1\xrightarrow{g_1}X_0$ to a morphism $\widetilde u(g_1,g_2,g_3,g_4)
\in\operatorname{Hom}_{\operatorname{Ho}(\mathcal M)}(X_4,X_0[-2])$.
By \cite[Lemma~5.8]{Sagave}, $\delta\widetilde u=0$.
Take $\Theta_{P,Q}:\operatorname{Hom}_{\mathcal T}(P,Q[-2]) \xrightarrow{\sim} \operatorname{Hom}_{\operatorname{Ho}(\mathcal M)}(\mathbb FP,(\mathbb FQ)[-2])$.
For each composable string of morphisms $P_4\xrightarrow{f_4}P_3\xrightarrow{f_3}P_2\xrightarrow{f_2}P_1\xrightarrow{f_1}P_0$, set
\[
u(f_1,f_2,f_3,f_4)
:=\Theta_{P_4,P_0}^{-1}
\bigl(\widetilde u(\mathbb Ff_1,\mathbb Ff_2,\mathbb Ff_3,\mathbb Ff_4)\bigr).
\]
By naturality, for $a:P'\to P$, $b:Q\to Q'$, and $x:P\to Q[-2]$, one has
\[
\Theta_{P',Q'}\bigl(b[-2]\circ x\circ a\bigr)
=(\mathbb Fb)[-2]\circ\Theta_{P,Q}(x)\circ\mathbb Fa.
\]
Put $u = \Theta^{-1}(\mathbb F^\ast \widetilde u)$, then one has $(\delta\widetilde u)=0$, and $u$ is zero-normalized.

Since $f_j f_{j+1}=0$, \cite[Proposition~6.1]{Sagave} gives $\widetilde u(\mathbb Ff_1,\ldots,\mathbb Ff_4)
\in\langle\mathbb Ff_1,\ldots,\mathbb Ff_4\rangle$.
Then Remark~\ref{rem_toda_functoriality} gives $u(f_1,f_2,f_3,f_4)\in\langle f_1,f_2,f_3,f_4\rangle$.
\end{proof}

Let $\mathcal P$ be the full additive category generated by representable $\mathcal C$-modules and the $P_p$.
Define the diagonal functor
\[
\Delta:\mathcal F\longrightarrow
\mathcal T_{\varnothing}
=\operatorname{Perf}(\Lambda_-(I))
\longrightarrow\mathcal T.
\]
It sends $y_\alpha$ to $\mathcal{H}om_{\Lambda_-(I)}(-,\alpha)$, $f:y_\alpha \to y_\beta$ to a constant map $I \to \operatorname{Hom}_{\mathcal C}(\alpha, \beta), \quad i \longmapsto f$.

For $i\in I$, let $\kappa_i:\mathcal P\to\mathcal T$ be the composite
\[
\kappa_i:\mathcal P
\longrightarrow\mathcal V_{\operatorname{Sign}(i)}
\xrightarrow{X\mapsto(X,0)}
\mathcal T_{\{i\}}
=\mathcal V_{\operatorname{Sign}(i)}\times
\operatorname{Perf}(\Lambda_-(I\setminus\{i\}))
\longrightarrow\mathcal T.
\]

Let $\mathcal Q$ be the small full additive closure in $\mathcal T$ of the
objects of $\operatorname{im}\Delta$ and all $\operatorname{im}\kappa_i$.

\begin{lemma}
    $\mathcal Q$ is $2$-split.
    For $P, Q \in \mathcal F$, one has $\operatorname{Hom}_{\mathcal T}(\Delta P,\Delta Q[-2])
\cong\mathsf L(\mathbb E|_{\mathcal F}(P,Q))$.
\end{lemma}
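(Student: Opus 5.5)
The plan is to reduce every Hom computation in $\mathcal Q$ to a single finite stage $\mathcal T_J$, where Homs split as a product over the factors. Any two objects of $\mathcal Q$ are finite direct sums of objects $\Delta P$ and $\kappa_i X$. Hence, for any two such objects, only finitely many indices $i$ occur, and we may choose a finite $J\subseteq I$ containing all of them. By Proposition~\ref{prop_finite_branch_transitions} the transition functors are fully faithful triangle functors, so
\[
\operatorname{Hom}_{\mathcal T}(X,Y[r])\cong\operatorname{Hom}_{\mathcal T_J}(X,Y[r]).
\]
In $\mathcal T_J=\bigl(\prod_{i\in J}\mathcal V_{\operatorname{Sign}(i)}\bigr)\times\operatorname{Perf}(\Lambda_-(I\setminus J))$ the Hom groups split as products over the factors. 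The image of $\Delta P$ under $\Phi_{\varnothing,J}$ has components $\iota_{\operatorname{Sign}(i)}(\operatorname{ev}_i\Delta P)=P$ in each $\mathcal V$-factor and $\Delta P|_{I\setminus J}$ in the core factor. The object $\kappa_iX$ has $X$ in the $i$-th factor and $0$ elsewhere.

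For $2$-splitness, it suffices to check each factor separately. In $\mathcal V_b$, every object of $\mathcal P$ is a coproduct of at most $\aleph_3$ representables, that is, a matrix object of $\Lambda_b^{(\leq\aleph_3)}$. Lemma~\ref{lem_large_matrix_realization} then computes $\operatorname{Hom}_{\mathcal V_b}(P,Q[d])$ as the degree-$d$ part of $\mathcal{H}om_{\Lambda_b^{(\leq\aleph_3)}}(P,Q)$, which is concentrated in degrees $0$ and $-2$. This is the same computation as the one displayed before Proposition~\ref{prop_branch_toda_cosets}, so all odd-degree groups vanish. In the core factor $\operatorname{Perf}(\Lambda_-(U))$ with $U=I\setminus J$ admissible, the dg category $\Lambda_-(U)$ is minimal (zero differential), and its Homs $\mathsf L_U(\mathcal{H}om_{\mathcal B}(\alpha,\beta)^d)$ live only in degrees $0$ and $-2$. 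By the twisted-complex identification used in Lemma~\ref{lem_finite_twisted_equivalence}, $\operatorname{Hom}((\alpha,0),(\beta,0)[d])=\mathcal{H}om(\alpha,\beta)^d$, and this extends additively to finite sums of representables, so odd degrees vanish there as well. Since morphisms between the different factors are zero, $\mathcal Q$ is $2$-split.

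For the second claim, take $P=\bigoplus y_{\alpha_k}$ and $Q=\bigoplus y_{\beta_l}$ in $\mathcal F$ and work in $\mathcal T_\varnothing=\operatorname{Perf}(\Lambda_-(I))$, which embeds fully faithfully into $\mathcal T$. The same minimality computation gives
\[
\operatorname{Hom}(\Delta P,\Delta Q[-2])\cong\bigoplus_{k,l}\mathsf L\bigl(E(\alpha_k,\beta_l)\bigr).
\]
Since $\mathsf L$ commutes with finite direct sums (it is defined pointwise, and Lemma~\ref{lem_function_coefficient_splitting}(2) also applies), this equals $\mathsf L\bigl(\bigoplus_{k,l}E(\alpha_k,\beta_l)\bigr)=\mathsf L(\mathbb E(P,Q))$, using $\mathbb E(y_\alpha,y_\beta)\cong E(\alpha,\beta)$. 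Naturality in $P,Q$ follows because composition in $\Lambda_-(I)$ is pointwise.

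The step I expect to be the main obstacle is making the identification of Homs in $H^0\operatorname{Tw}$ over the minimal category rigorous over an arbitrary ring $R$. I would handle it exactly as in Lemma~\ref{lem_large_matrix_realization}: use the homotopy Yoneda equivalence and minimality on one-term objects, which suffices because all objects involved are finite sums of shifted representables with zero twist.
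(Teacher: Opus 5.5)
Your proposal is correct. The Hom isomorphism $\operatorname{Hom}_{\mathcal T}(\Delta P,\Delta Q[-2])\cong\mathsf L(\mathbb E|_{\mathcal F}(P,Q))$ follows exactly the paper's route: read off the case $P=y_\alpha$, $Q=y_\beta$ from the definition of $\Lambda_-(I)$ inside $\mathcal T_\varnothing$, then extend over finite sums.

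The $2$-split assertion is where you differ. The paper's proof gives no argument for it, presumably relying on the degree computation displayed before Proposition~\ref{prop_branch_toda_cosets}. You instead reduce any pair of objects of $\mathcal Q$ to a finite stage $\mathcal T_J$ and use the product decomposition. You then check odd degrees factor by factor: Lemma~\ref{lem_large_matrix_realization} handles the $\mathcal V_b$-factors, and the zero differential handles $\operatorname{Perf}(\Lambda_-(I\setminus J))$. This step is genuinely needed. The objects $\kappa_iP_p$ sit in the $\mathcal V_b$-factors, not in $\operatorname{Perf}(\Lambda_-(I))$, so groups such as $\operatorname{Hom}_{\mathcal T}(\Delta P,\kappa_iX[r])$ and $\operatorname{Hom}_{\mathcal T}(\kappa_iX,\kappa_jY[r])$ are not covered by the paper's one-line computation. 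The paper's version is shorter but leaves this implicit; yours actually verifies it.

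The obstacle you flag is not a real one. For one-term twisted complexes with zero twist, the Hom complex in $\operatorname{Tw}$ is by definition $\mathcal{H}om(\alpha,\beta)[d]$ with differential $m_1=0$. The identification therefore holds over any $R$ without homotopy Yoneda. The Yoneda comparison enters only through Lemma~\ref{lem_large_matrix_realization}, which you already cite.
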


\begin{proof}
    For representable $P=y_\alpha$ and $Q=y_\beta$, the definition of $\Lambda_-(I)$ gives
\[
\operatorname{Hom}_{\mathcal T}
(\Delta y_\alpha,\Delta y_\beta[-2])
\cong\mathsf L(E(\alpha,\beta)).
\]
The general case follows by taking finite coproducts in both variables.
\end{proof}

\begin{lemma}
\label{lem_core_branch_coefficients}
For each $i\in I$, there are natural transformations
\[
\kappa_i|_{\mathcal F}\xrightarrow{\mathrm{in}_i}\Delta
\xrightarrow{\mathrm{pr}_i}\kappa_i|_{\mathcal F},
\qquad \mathrm{pr}_i\mathrm{in}_i=\mathrm{id}_{\kappa_i|_{\mathcal F}}.
\]
For $P,Q\in\mathcal F$, evaluation at $i$ corresponds to the map
\[
\operatorname{Hom}_{\mathcal T}(\Delta P,\Delta Q[-2])
\longrightarrow
\operatorname{Hom}_{\mathcal T}(\kappa_iP,\kappa_iQ[-2]),
\qquad
 x\longmapsto\mathrm{pr}_{i,Q}[-2]\circ x\circ\mathrm{in}_{i,P}.
\]
\end{lemma}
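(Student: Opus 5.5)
Both transformations will be built inside a single finite stage, $\mathcal T_{\{i\}}$, and then pushed into $\mathcal T$ along the colimit. The transition functor $\Phi_{\varnothing,\{i\}}$ of Proposition~\ref{prop_finite_branch_transitions} uses the partition $I=\{i\}\amalg(I\setminus\{i\})$ together with Lemma~\ref{lem_function_core_splitting}. It sends $\Delta y_\alpha=\mathcal{H}om_{\Lambda_-(I)}(-,\alpha)$ to
\[
\bigl(\iota_{\operatorname{Sign}(i)}(\operatorname{ev}_i\Delta y_\alpha),\ \Delta y_\alpha|_{I\setminus\{i\}}\bigr)
=\bigl(\iota_{\operatorname{Sign}(i)}y_\alpha,\ \mathcal{H}om_{\Lambda_-(I\setminus\{i\})}(-,\alpha)\bigr).
\]
Its first coordinate is exactly $\kappa_i y_\alpha$ viewed in $\mathcal T_{\{i\}}$, with second coordinate $0$. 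Here I use the identification $\operatorname{Perf}(\Lambda_-)\simeq\operatorname{Perf}(\Lambda_+)$ of Proposition~\ref{prop_marked_core_descent}, which fixes one-term objects and their morphism groups. This gives, in $\mathcal T_{\{i\}}$, a biproduct decomposition
\[
\Delta P\cong\kappa_iP\oplus(0,P|_{I\setminus\{i\}}).
\]
I take $\mathrm{in}_{i,P}$ and $\mathrm{pr}_{i,P}$ to be the canonical inclusion and projection of the first summand. Then $\mathrm{pr}_i\mathrm{in}_i=\mathrm{id}$ holds automatically. Finite direct sums extend everything from representables to all of $\mathcal F$.

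For naturality, I would check that a morphism $f\colon P\to Q$ in $\mathcal F$ is sent by $\Delta$ to the constant family $(f)_{i\in I}$. Under the splitting this becomes the diagonal matrix $\operatorname{diag}\bigl(\iota_{\operatorname{Sign}(i)}(f),\,f|_{I\setminus\{i\}}\bigr)$. The key fact is that the splitting of Lemma~\ref{lem_function_coefficient_splitting} acts on morphisms by restriction of functions, $g\mapsto(g|_{\{i\}},g|_{I\setminus\{i\}})$. Since the matrix is diagonal, it commutes with the inclusion and projection of the first summand. This proves that $\mathrm{in}_i$ and $\mathrm{pr}_i$ are natural transformations $\kappa_i|_{\mathcal F}\rightleftarrows\Delta$. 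Because they live in $\mathcal T_{\{i\}}$ and the functor $\mathcal T_{\{i\}}\to\mathcal T$ is a fully faithful triangle functor, all of this transports to $\mathcal T$, including compatibility with $[-2]$.

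For the evaluation statement, take $x\in\operatorname{Hom}_{\mathcal T}(\Delta P,\Delta Q[-2])\cong\mathsf L(\mathbb E|_{\mathcal F}(P,Q))$, as given by the preceding lemma. I would reduce to $P=y_\alpha$, $Q=y_\beta$ and compute inside $\mathcal T_{\{i\}}$. Lemma~\ref{lem_function_core_splitting} is an equivalence of triangulated categories, so it commutes with suspension, and $x$ is again a diagonal matrix $(x|_{\{i\}},x|_{I\setminus\{i\}})$. Compressing by $\mathrm{pr}_{i,Q}[-2]$ and $\mathrm{in}_{i,P}$ leaves $x(i)$ as an element of $\mathcal{H}om_{\mathcal B}(\alpha,\beta)^{-2}=E(\alpha,\beta)$. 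Lemma~\ref{lem_large_matrix_realization} identifies this group with $\operatorname{Hom}_{\mathcal V_{\operatorname{Sign}(i)}}(y_\alpha,y_\beta[-2])$, and it is the same group for either sign.

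The main obstacle is the $+$ branch. There the identification $\operatorname{Perf}(\Lambda_-)\simeq\operatorname{Perf}(\Lambda_+)$ is only a triangle equivalence built from the $A_\infty$-functors $f_\Gamma$, which have a nonzero component $(f_\Gamma)_3=h_\Gamma$. I must make sure that the resulting identification of degree $-2$ morphism groups between one-term objects really is the identity $[a]\mapsto[a]$, compatibly with how $\iota_+$ is defined. Here I would invoke the explicit statement in Lemma~\ref{lem_finite_twisted_equivalence} and Proposition~\ref{prop_marked_core_descent} that $F$ fixes one-term objects and their graded morphism groups. I also need the gluing isomorphisms $\vartheta$ to be trivial on one-term objects, which holds because $t_{\Gamma,\Gamma',X}=1$ when $q=0$. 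The same fact gives naturality in $\mathcal F$ on the $+$ side: since $(f_\Gamma)_1=\mathrm{id}$ and $q=0$, the functor acts as the identity on maps between finite sums of representables.
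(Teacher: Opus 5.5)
Your proposal is correct and follows the same route as the paper. Both arguments split $\Delta P$ inside the stage $\mathcal T_{\{i\}}=\mathcal V_{\operatorname{Sign}(i)}\times\operatorname{Perf}(\Lambda_-(I\setminus\{i\}))$, take $\mathrm{in}_i$ and $\mathrm{pr}_i$ to be the inclusion and projection of the first factor, and read off the first component of $x$ as $(x_{ba}(i))_{b,a}=\operatorname{ev}_i(x)$. Your explicit naturality check via diagonal matrices, and your check that the $+$-branch identification and the gluing isomorphisms $\vartheta$ are the identity on $q=0$ objects, fill in steps the paper leaves implicit.
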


\begin{proof}
    For $P=\coprod_{a=1}^{m}y_{\alpha_a}$ and $Q=\coprod_{b=1}^{n}y_{\beta_b}$, one has
    \[ 
    \operatorname{Hom}_{\mathcal T}(\Delta P,\Delta Q[-2]) \cong\prod_{a,b}\mathsf L(E(\alpha_a,\beta_b))\cong\mathsf L\!\left(\prod_{a,b}E(\alpha_a,\beta_b)\right).
\]

Under the displayed identification, write a morphism
$x:\Delta P\to\Delta Q[-2]$ as a matrix $(x_{ba})$, where
$x_{ba}\in\mathsf L(E(\alpha_a,\beta_b))$.
Evaluation at $i$ sends this matrix to $\operatorname{ev}_i(x)=(x_{ba}(i))_{b,a}$.

On the other hand, under the decomposition $\mathcal T_{\{i\}}
=\mathcal V_{\operatorname{Sign}(i)}
\times\operatorname{Perf}(\Lambda_-(I\setminus\{i\}))$, the first component of $x$ is precisely $(x_{ba}(i))_{b,a}$.
Hence
\[
(\mathrm{pr}_{i})_Q[-2]\circ x\circ(\mathrm{in}_{i})_P
=(x_{ba}(i))_{b,a}
=\operatorname{ev}_i(x). \qedhere
\]
\end{proof}

\begin{lemma}\label{lem_corner_prism_naturality}
For $\mathfrak t\in H^4_{\mathrm{cat}}\!\left(
\mathcal Q,\operatorname{Hom}_{\mathcal T}(-,-[-2])
|_{\mathcal Q^{\mathrm{op}}\times\mathcal Q}\right)$, put $\mathfrak t_i=(\operatorname{ev}_i)_*\Delta^*\mathfrak t$.
Then 
\[
\operatorname{res}(\mathfrak t_i) =\operatorname{res}\bigl((\kappa_i^*\mathfrak t)|_{\mathcal F}\bigr) \quad \text{in} \quad H^4_{\mathrm{cat}}(\omega_3,E|_{\omega_3}).
\] 
Consequently, $\chi(\mathfrak t_i)
=\chi\bigl((\kappa_i^*\mathfrak t)|_{\mathcal F}\bigr)$.
\end{lemma}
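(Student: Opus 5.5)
The plan is to work at the level of cochains and compare the two restrictions to $\omega_3$ directly, using the natural transformations $\mathrm{in}_i,\mathrm{pr}_i$ of Lemma~\ref{lem_core_branch_coefficients}. Fix a zero-normalised cocycle $u$ representing $\mathfrak t$ on $\mathcal Q$. The class $\mathfrak t_i$ is represented by the cochain on $\mathcal F$
\[
(f_1,\ldots,f_4)\longmapsto \operatorname{ev}_i\bigl(u(\Delta f_1,\ldots,\Delta f_4)\bigr)
=\mathrm{pr}_{i}[-2]\circ u(\Delta f_1,\ldots,\Delta f_4)\circ\mathrm{in}_{i},
\]
with the equality coming from Lemma~\ref{lem_core_branch_coefficients}. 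The class $(\kappa_i^*\mathfrak t)|_{\mathcal F}$ is represented by $(f_1,\ldots,f_4)\mapsto u(\kappa_if_1,\ldots,\kappa_if_4)$. Both are valued in $\operatorname{Hom}_{\mathcal T}(\kappa_i-,\kappa_i-[-2])\cong\mathbb E|_{\mathcal F}$.

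The main step is a general naturality principle for category cohomology. Let $\Phi,\Psi:\mathcal F\to\mathcal Q$ be functors and let $\iota:\Phi\Rightarrow\Psi$, $\pi:\Psi\Rightarrow\Phi$ be natural transformations with $\pi\iota=1$. Then the two pullback maps $H^4(\mathcal Q;\operatorname{Hom}(-,-[-2]))\to H^4(\mathcal F;\operatorname{Hom}(\Phi-,\Phi-[-2]))$ agree, namely $\Phi^*$ and $(\pi,\iota)$-conjugation of $\Psi^*$. I would prove this with the standard prism homotopy. Each natural transformation defines a functor $\mathcal F\times[1]\to\mathcal Q$, and a cocycle pulled back along it restricts to cohomologous cocycles at the two ends. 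Concretely, the explicit homotopy is
\[
H(f_1,f_2,f_3)=\sum_{j=0}^{3}(-1)^j\,\pi\circ u(\Psi f_1,\ldots,\Psi f_j,\iota,\Phi f_{j+1},\ldots,\Phi f_3),
\]
using $\Psi f\circ\iota=\iota\circ\Phi f$. A direct computation using $\delta u=0$ then gives $\pi u(\Psi f_\bullet)\iota-u(\Phi f_\bullet)=\delta H$, where the boundary term $u(\ldots,\pi\iota,\ldots)=u(\ldots,1,\ldots)$ drops out. Dropping that term requires the cocycle to be normalised at identities as well, not only at zero. I expect this to be the main obstacle. To handle it, I would first replace $u$ by a cohomologous cocycle that is normalised at identities, via the standard normalisation theorem for category cohomology. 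This replacement is compatible with the zero-normalisation, or the zero-normalisation can simply be dropped, since only cohomology classes matter here.

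Apply this with $\Phi=\kappa_i|_{\mathcal F}$, $\Psi=\Delta$, $\iota=\mathrm{in}_i$ and $\pi=\mathrm{pr}_i$. This gives the equality $\mathfrak t_i=(\kappa_i^*\mathfrak t)|_{\mathcal F}$ already in $H^4_{\mathrm{cat}}(\mathcal F,\mathbb E|_{\mathcal F})$. Applying $\operatorname{res}$ to $\omega_3$ yields the stated equality, once one checks that the identifications of coefficients with $E|_{\omega_3}$ agree on both sides. That check is immediate from the matrix description in the proof of Lemma~\ref{lem_core_branch_coefficients}. Composing with the isomorphism of Definition~\ref{def_characteristic_map} gives $\chi(\mathfrak t_i)=\chi((\kappa_i^*\mathfrak t)|_{\mathcal F})$.
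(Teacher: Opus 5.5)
Your argument is correct and is essentially the paper's proof. Your prism homotopy $H$, taken with $\Phi=\kappa_i|_{\mathcal F}$, $\Psi=\Delta$, $\iota=\mathrm{in}_i$ and $\pi=\mathrm{pr}_i$, is exactly the cochain $h$ used there. Summing $\delta u=0$ over the five strings with $\iota$ inserted gives $\delta H=u(\Phi f_\bullet)-\pi\,u(\Psi f_\bullet)\,\iota$; note this is the opposite sign to yours.

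The obstacle you anticipate does not actually arise. Since $\pi$ sits outside $u$, the composite $\pi\iota$ only enters through the coefficient action, as $\pi\iota\circ u(\Phi f_\bullet)$, and never as an argument of $u$. So no normalisation at identities (or at zero) is needed, although your renormalisation step is harmless.
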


\begin{proof}
    Fix $u\in Z^4_{\mathrm{cat}}\left( \mathcal Q,\operatorname{Hom}_{\mathcal T}(-,-[-2]) |_{\mathcal Q^{\mathrm{op}}\times\mathcal Q}\right)$ with $[u] = \mathfrak t$.
    Consider a mapping sequence $P_4\xrightarrow{f_4}P_3\xrightarrow{f_3}P_2 \xrightarrow{f_2}P_1\xrightarrow{f_1}P_0$ in $\mathcal F$.
    Then 
    \[
(\Delta^*u)(f_1,\ldots,f_4) =u(\Delta f_1,\ldots,\Delta f_4)
\in\operatorname{Hom}_{\mathcal T}(\Delta P_4,\Delta P_0[-2]).
\]
Hence Lemma~\ref{lem_core_branch_coefficients} gives
\[
((\operatorname{ev}_i)_*(\Delta^*u))(f_1, \ldots, f_4)=(\mathrm{pr}_{i})_{P_0}[-2]\circ
u(\Delta f_1,\ldots,\Delta f_4)
\circ(\mathrm{in}_{i})_{P_4}.
\]
Meanwhile, a cocycle $w_i$ representing $(\kappa_i^*\mathfrak t)|_{\mathcal F}$ gives $w_i(f_1,\ldots,f_4)
=u(\kappa_i f_1,\ldots,\kappa_i f_4)$.

Both $(\operatorname{ev}_i)_*(\Delta^*u)$ and $w_i$ takes velues in $\operatorname{Hom}_{\mathcal T}(\kappa_iP_4,\kappa_iP_0[-2])$, we then show these two cocycles differ by a coboundary.
For $P_3\xrightarrow{f_3}P_2\xrightarrow{f_2}P_1\xrightarrow{f_1}P_0$ in $\mathcal F$, put
\[
\begin{aligned}
h(f_1,f_2,f_3)
=(\mathrm{pr}_{i})_{P_0}[-2]\circ\bigl(&
u((\mathrm{in}_{i})_{P_0},\kappa_i f_1,\kappa_i f_2,\kappa_i f_3) -u(\Delta f_1,(\mathrm{in}_{i})_{P_1},\kappa_i f_2,\kappa_i f_3)\\
&+u(\Delta f_1,\Delta f_2,(\mathrm{in}_{i})_{P_2},\kappa_i f_3)-u(\Delta f_1,\Delta f_2,\Delta f_3,(\mathrm{in}_{i})_{P_3})\bigr).
\end{aligned}
\]
Taking the above equations and using $\delta u =0$, one has 
\[
\begin{aligned}
(\delta h)(f_1,f_2,f_3,f_4)
={}&(\kappa_i f_1)[-2]\circ h(f_2,f_3,f_4) -h(f_1f_2,f_3,f_4)
+h(f_1,f_2f_3,f_4)\\
&-h(f_1,f_2,f_3f_4) +h(f_1,f_2,f_3)\circ\kappa_i f_4\\
={}&u(\kappa_i f_1,\ldots,\kappa_i f_4)-(\mathrm{pr}_{i})_{P_0}[-2]\circ
u(\Delta f_1,\ldots,\Delta f_4)
\circ(\mathrm{in}_{i})_{P_4}
\end{aligned}
\]
This proves $[w_i]=(\operatorname{ev}_i)_*\Delta^*[u]=(\operatorname{ev}_i)_*\Delta^*\mathfrak t\ (=\mathfrak t_i)$.

Hence $\operatorname{res}\bigl((\operatorname{ev}_i)_*\Delta^*\mathfrak t\bigr)
=\operatorname{res}\bigl((\kappa_i^*\mathfrak t)|_{\mathcal F}\bigr)$ in $H^4_{\mathrm{cat}}(\omega_3,E|_{\omega_3})$.
\end{proof}

\begin{lemma}
\label{lem_branch_characteristic_values}
Suppose that $\mathcal T$ is topological, and choose the zero-normalised
cocycle $u$ of Proposition~\ref{prop_topological_universal_class} on
$\mathcal Q$. Put $\mathfrak t=[u]$ and
$\mathfrak t_i=(\operatorname{ev}_i)_*\Delta^*\mathfrak t$.
Then
\[
\chi(\mathfrak t_i)=
\begin{cases}
0,&\operatorname{Sign}(i)=-,\\
-[c],&\operatorname{Sign}(i)=+.
\end{cases}
\]
\end{lemma}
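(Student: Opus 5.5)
By Lemma~\ref{lem_corner_prism_naturality}, $\chi(\mathfrak t_i)=\chi\bigl((\kappa_i^*\mathfrak t)|_{\mathcal F}\bigr)$. It therefore suffices to compute the right-hand side from the cocycle $w_i=\kappa_i^*u$ on $\mathcal P$.

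\emph{Step 1: $w_i$ is a zero-normalised cocycle on $\mathcal P$.} Since $\kappa_i$ is an additive functor, $w_i=\kappa_i^*u$ inherits both properties from $u$. Its coefficients are identified with $\mathbb E|_{\mathcal P}$ via
\[
\operatorname{Hom}_{\mathcal T}(\kappa_iP,\kappa_iQ[-2])\cong\operatorname{Hom}_{\mathcal V_{\operatorname{Sign}(i)}}(P,Q[-2])\cong\operatorname{Hom}_{\mathcal C}(P,Q\otimes_{\mathcal C}E).
\]
Here the first isomorphism holds because $\mathcal V_{\operatorname{Sign}(i)}\to\mathcal T_{\{i\}}\to\mathcal T$ is fully faithful (Proposition~\ref{prop_finite_branch_transitions}), and the second is the Hom computation following Lemma~\ref{lem_large_matrix_realization}. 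These identifications are natural in $P$ and $Q$, so $w_i$ becomes a zero-normalised cocycle in $Z^4_{\mathrm{cat}}(\mathcal P,\mathbb E|_{\mathcal P})$. Its restriction to $\mathcal F$ represents $(\kappa_i^*\mathfrak t)|_{\mathcal F}$.

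\emph{Step 2: apply Lemma~\ref{lem_finite_large_characteristic}.} This lemma gives
\[
\chi\bigl([w_i|_{\mathcal F}]\bigr)=\bigl[(d_0\otimes_{\mathcal C}E)\circ w_i(d_1,d_2,d_3,d_4)\bigr].
\]

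\emph{Step 3: locate $w_i(d_1,\dots,d_4)$ in a Toda bracket.} The resolution satisfies $d_jd_{j+1}=0$, and all $\kappa_iP_p$ lie in the $2$-split subcategory $\mathcal Q$. Proposition~\ref{prop_topological_universal_class} therefore gives
\[
u(\kappa_id_1,\dots,\kappa_id_4)\in\langle\kappa_id_1,\dots,\kappa_id_4\rangle_{\mathcal T}.
\]
The terms lie in a $2$-split subcategory and $\mathcal V_{\operatorname{Sign}(i)}\to\mathcal T$ is a fully faithful triangle functor, so Remark~\ref{rem_toda_functoriality} gives equality of brackets. Hence $w_i(d_1,\dots,d_4)$ lies in $\langle d_1,d_2,d_3,d_4\rangle_{\mathcal V_{\operatorname{Sign}(i)}}$.

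\emph{Step 4: conclude.} Proposition~\ref{prop_corrected_toda_endpoint} shows that for any $f$ in this bracket, $(d_0\otimes_{\mathcal C}E)\circ f$ represents $0$ when $\operatorname{Sign}(i)=-$ and $-[c]$ when $\operatorname{Sign}(i)=+$. This gives the claimed values of $\chi(\mathfrak t_i)$.

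\emph{Main obstacle.} The delicate point is the compatibility of identifications in Steps 1 and 3. The isomorphism $\operatorname{Hom}_{\mathcal T}(\kappa_iP,\kappa_iQ[-2])\cong\mathbb E(P,Q)$ used to view $w_i$ as an $\mathbb E$-valued cocycle must match the identification of $\operatorname{Hom}_{\mathcal V_b}(P_4,P_0[-2])$ with $\operatorname{Hom}_{\mathcal C}(P_4,P_0\otimes_{\mathcal C}E)$ used in Propositions~\ref{prop_branch_toda_cosets} and~\ref{prop_corrected_toda_endpoint}. This includes the suspension isomorphisms that enter Remark~\ref{rem_toda_functoriality}. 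A sign discrepancy there would turn $-[c]$ into $[c]$, though even then $0\ne\pm[c]$ would suffice for the eventual contradiction. I would check that both identifications come from the same Yoneda isomorphism of Lemma~\ref{lem_large_matrix_realization}, so that they agree on the nose and the sign is exactly as claimed.
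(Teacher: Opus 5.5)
Your proposal is correct and follows the paper's proof essentially step for step. You reduce via Lemma~\ref{lem_corner_prism_naturality} to the zero-normalised cocycle $\kappa_i^*u$ on $\mathcal P$, evaluate it on $(d_1,\dots,d_4)$ with Lemma~\ref{lem_finite_large_characteristic}, place that value in $\langle d_1,d_2,d_3,d_4\rangle_{\mathcal V_{\operatorname{Sign}(i)}}$ using Proposition~\ref{prop_topological_universal_class} and Remark~\ref{rem_toda_functoriality}, and conclude with Proposition~\ref{prop_corrected_toda_endpoint}, exactly as the paper does. Your explicit identification of the coefficients of $\kappa_i^*u$ with $\mathbb E|_{\mathcal P}$, and your comment on signs, spell out what the paper leaves as ``under the identification''; you are also right that a sign ambiguity would not affect the final contradiction, since $[c]\neq 0$.
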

\begin{proof}
Since $\kappa_i$ is additive, $\kappa_i^*u$ is again zero-normalised.
This cocycle is defined on $\mathcal P$, thus taking $P_4 \xrightarrow{d_4}\cdots \xrightarrow{d_1} P_0$ and applying  Lemmas~\ref{lem_corner_prism_naturality}
and~\ref{lem_finite_large_characteristic} give
\[
\chi(\mathfrak t_i)
=\chi\!\left([\,(\kappa_i^*u)|_{\mathcal F}\,]\right)=\bigl[(d_0\otimes_{\mathcal C}E)\circ
(\kappa_i^*u)(d_1,d_2,d_3,d_4)\bigr].
\]
By Proposition~\ref{prop_topological_universal_class},
\[
u(\kappa_i d_1,\kappa_i d_2,\kappa_i d_3,\kappa_i d_4)
\in
\langle\kappa_i d_1,\kappa_i d_2,\kappa_i d_3,\kappa_i d_4\rangle_{\mathcal T}
\subseteq
\operatorname{Hom}_{\mathcal T}(\kappa_iP_4,\kappa_iP_0[-2]).
\]
Then, under the identification $\operatorname{Hom}_{\mathcal T}(\kappa_iP_4,\kappa_iP_0[-2])
\cong
\operatorname{Hom}_{\mathcal V_{\operatorname{Sign}(i)}}(P_4,P_0[-2])$, 
Remark~\ref{rem_toda_functoriality} gives a canonical bijection $\langle\kappa_i d_1,\kappa_i d_2,\kappa_i d_3,\kappa_i d_4\rangle_{\mathcal T}
\cong \langle d_1,d_2,d_3,d_4\rangle_{\mathcal V_{\operatorname{Sign}(i)}}$.
Finally, Proposition~\ref{prop_corrected_toda_endpoint} now gives the asserted values.
\end{proof}

\begin{proposition}\label{prop_incidence_not_topological}
The category $\mathcal T$ is not topological.
\end{proposition}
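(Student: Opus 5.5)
We argue by contradiction, following the outline. Suppose $\mathcal T$ is topological. Then the small full subcategory $\mathcal Q$ (or a small thick subcategory of $\mathcal T$ containing it, if smallness of $\mathcal T$ itself is an issue) is again topological: restricting a fully faithful triangle functor $\mathcal T\to\operatorname{Ho}(\mathcal M)$ to the thick closure of $\mathcal Q$ keeps it fully faithful and triangulated. By the lemma preceding $\mathcal Q$'s definition, $\mathcal Q$ is $2$-split. Proposition~\ref{prop_topological_universal_class} then gives a zero-normalised cocycle $u$ on $\mathcal Q$ with values in $\operatorname{Hom}_{\mathcal T}(-,-[-2])$, whose values on complexes lie in the fourfold Toda brackets. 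Put $\mathfrak t=[u]$ and $\mathfrak t_i=(\operatorname{ev}_i)_*\Delta^*\mathfrak t$ for $i\in I$.

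Next we compute the values of $i\mapsto\chi(\mathfrak t_i)$ in two ways. First, Lemma~\ref{lem_branch_characteristic_values} gives $\chi(\mathfrak t_i)=0$ when $\operatorname{Sign}(i)=-$ and $\chi(\mathfrak t_i)=-[c]$ when $\operatorname{Sign}(i)=+$. Second, since $\operatorname{Hom}_{\mathcal T}(\Delta P,\Delta Q[-2])\cong\mathsf L(\mathbb E|_{\mathcal F}(P,Q))$ naturally in $P,Q\in\mathcal F$, the class $z=\Delta^*\mathfrak t$ lies in $H^4_{\mathrm{cat}}(\mathcal F,\mathsf L(\mathbb E|_{\mathcal F}))$. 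Evaluation at $i$ corresponds to $\operatorname{ev}_i$ on coefficients, by Lemma~\ref{lem_core_branch_coefficients}. Proposition~\ref{prop_characteristic_almost_constant} therefore yields $S\subseteq I$ with $|S|<\theta$ such that $i\mapsto\chi((\operatorname{ev}_i)_*z)=\chi(\mathfrak t_i)$ is constant on $I\setminus S$.

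The two descriptions are incompatible. Both $\theta\times\{+\}$ and $\theta\times\{-\}$ have cardinality $\theta>|S|$, so each meets $I\setminus S$. Choosing $i_+=(\xi,+)$ and $i_-=(\eta,-)$ outside $S$, constancy forces $0=\chi(\mathfrak t_{i_-})=\chi(\mathfrak t_{i_+})=-[c]$. This contradicts $[c]\neq0$ from Lemma~\ref{lem_ordinal_nonvanishing}, so $\mathcal T$ is not topological.

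The main obstacle is the middle identification. We need $\Delta^*\mathfrak t$ to genuinely define a class with $\mathsf L$-coefficients whose pointwise evaluations are the $\mathfrak t_i$. The cochain complex $C^\bullet_{\mathrm{cat}}(\mathcal F,\mathsf L(\mathbb E))$ must be identified with the one built from $\operatorname{Hom}_{\mathcal T}(\Delta-,\Delta-[-2])$, compatibly with the bifunctor structure; this uses that $\Delta$ sends morphisms of $\mathcal F$ to constant functions. The cocycle representatives chosen in the proof of Proposition~\ref{prop_characteristic_almost_constant} should also be taken compatibly with $\operatorname{ev}_i$. Everything else is a direct assembly of the earlier results.
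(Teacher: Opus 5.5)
Your argument is correct and is essentially the paper's proof. You assume a topological realisation, take the Toda cocycle $u$ on the $2$-split subcategory $\mathcal Q$, and use Lemma~\ref{lem_core_branch_coefficients} to see that $\Delta^*[u]$ has $\mathsf L$-valued coefficients, so Proposition~\ref{prop_characteristic_almost_constant} makes $i\mapsto\chi(\mathfrak t_i)$ constant off a set of size $<\theta$. You then contradict this with the branch values $0$ and $-[c]\neq 0$ from Lemma~\ref{lem_branch_characteristic_values}, each attained on a sign class of size $\theta$. Your extra care about smallness (passing to the thick closure of $\mathcal Q$) and about the pointwise compatibility of $\operatorname{ev}_i$ with the coefficient bifunctor only makes explicit what the paper leaves implicit.
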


\begin{proof}
Suppose otherwise, and choose $u$ and $\mathfrak t_i$ as in
Lemma~\ref{lem_branch_characteristic_values}.
By Lemma~\ref{lem_core_branch_coefficients},
\[
\Delta^*[u]\in
H^4_{\mathrm{cat}}(\mathcal F,\mathsf L(\mathbb E|_{\mathcal F})).
\]
Proposition~\ref{prop_characteristic_almost_constant} gives $S\subseteq I$
with $|S|<\theta$ such that $\chi(\mathfrak t_i)$ is constant
for $i\in I\setminus S$.

However, by Lemma~\ref{lem_branch_characteristic_values}, $\chi(\mathfrak t_{i}) = -[c]$ for $\operatorname{Sign}(i) = +$, and $\chi(\mathfrak t_{i}) = 0$ for $\operatorname{Sign}(i) = -$.
Here $[c] \neq 0$ by
Lemma~\ref{lem_ordinal_nonvanishing}
Both $|\operatorname{Sign}^{-1}(+)|= |\operatorname{Sign}^{-1}(-)| = \theta$, a contradiction.
\end{proof}

\section*{Statement on the use of AI}

In the first arXiv version (v1), the author proposed a construction using two perfect categories arising from two $A_\infty$-categories with the same underlying graded category. By taking alternating products, the construction produces a triangulated category which is algebraic at every finite stage but is not topological as a whole. To find a graded algebra supporting this construction, the author used GPT6.1 Sol and computer searches.

The author soon realised that the tensor construction in v1 was not essential: the obstruction could be realised directly over the ordinal category $\omega_3$, with a much simpler graded category and $A_\infty$-structures, leading to this simpler version (v2). Sections~1 (Introduction) and~3 (Main construction) of v2 are written by the author. Section~2 (Preliminaries) is written with assistance from GPT6.1 Sol; it provides background material and does not contain the main results.

We thank Yicong Zhao for his formal verification of the original version of this article in Lean.
Formal verification of the present version (in Lean) is ongoing.

\end{document}